\theoremstyle{plain}
\newcommand{\beq}{\begin{equation}}
\newcommand{\eeq}{\end{equation}}
\newcommand{\bna}{\begin{eqnarray}}
\newcommand{\ena}{\end{eqnarray}}
\newcommand{\bea}{\begin{eqnarray*}}
\newcommand{\eea}{\end{eqnarray*}}
\newcommand{\bt}[1]{ \begin{tabular} { #1 } }
\newcommand{\et} {\end{tabular}}
\newcommand{\normmm}[1]{{\left\vert\kern-0.25ex\left\vert\kern-0.25ex\left\vert #1 
    \right\vert\kern-0.25ex\right\vert\kern-0.25ex\right\vert}}
 \newcommand{\jmp}[1]{[\![#1]\!]}
 \newcommand\smbull{%
    \raisebox{-0.25ex}{\scalebox{1.7}{$\cdot$}}%
}
\numberwithin{equation}{section}
\def\cA{{\mathcal A}}
\def\cC{{\mathcal C}}
\def\cE{{\mathcal E}}
\def\cI{{\mathcal I}}
\def\cL{{\mathcal L}}
\def\cM{{\mathcal M}}
\def\cN{{\mathcal N}}
\def\cR{{\mathcal R}}
\def\cS{{\mathcal S}}
\def\mbbA{\mathbb{A}}
\def\mbbC{\mathbb{C}}
\def\mbbR{\mathbb{R}}
\def\mbbS{\mathbb{S}}
\def\mbbV{\mathbb{V}}
\def\mrA{\textrm A}
\def\mrB{\textrm B}
\def\mrD{\textrm D}
\def\mrE{\textrm E}
\def\mrG{\textrm G}
\def\mrH{\textrm H}
\def\mrI{\textrm I}
\def\mrJ{\textrm J}
\def\mrK{\textrm K}
\def\mrL{\textrm L}
\def\mrW{\textrm W}
\def\mrd{\textrm d}
\def\ts{\tilde{s}}
\def\td{\bar{\ell}}
\def\tU{\tilde{U}}
\def\vp{\vec{p}}
\def\vtau{{\tau}}
\def\del{\delta}
\def\tg{\tilde{g}}
\def\tgam{\tilde{\gamma}}
\def\eps{\varepsilon}
\def\bpm{\begin{matrix}}
\def\epm{\end{pmatrix}}
\def\bpm{\begin{pmatrix}}
\def\epm{\end{pmatrix}}
\DeclareMathOperator{\Span}{span}
   \def\rmd{\mathrm d}
   \def\lie{\mathrm {LI}}
\newtheorem{thm}{Theorem}
\newtheorem{lemma}[thm]{Lemma}
\newtheorem{remark}{Remark}
\newtheorem{prop}[thm]{Proposition}
\begin{document}
\title{\textbf{Quasisteady Patterns in Interfaces: Folding and Faceting}}
\author[Nguyen]{Vinh Nguyen}
\author[Promislow]{Keith Promislow}
\author[Wetton]{Brian Wetton}
\address{Vinh Nguyen, Department of Mathematics, Michigan State University,
East Lansing, MI 48824, USA, nguy1685@msu.edu}
\address{Keith Promislow, Department of Mathematics, Michigan State University,
East Lansing, MI 48824, USA, promislo@msu.edu}
\address{Brian Wetton, Department of Mathematics, University of British Columbia,
Vancouver, BC V6T 1Z2, Canada, wetton@math.ubc.ca}
\date{\today}
%\newcommand\@shorttitle{}
% define \theshorttitle to what is given
%\newcommand\shorttitle[1]{\renewcommand\@shorttitle{#1}}
%\shorttitle{Gradient Flows of Interfaces}
%\newpage

%%%%%%%%%%%%%%%%%%%%%%%%%%%%%%%%%%%%%%%%%%%%%%%%%%%%%%%%%%%%%%%%%%%%%%%%%%%%%%%%%%%%
\begin{abstract}
We present a systematic derivation of the gradient flows associated to a broad class of interfacial energies, emphasizing the relation between intrinsic and extrinsic variations of the interface. 
 We show that the intrinsic variables formulation brings the gradient flow into alignment with the traditional analysis of quasi-steady dynamical systems defined on a stationary domain.  Gradient flows are derived for model systems which exhibit quasi-steady pattern formation including coarsening among faceted interfaces and nonlocal interactions that model membrane self-adhesion and self-avoidance.  
\end{abstract}
\maketitle

\section{Introduction}
There is a rich literature describing quasi-steady pattern formation via reductive limits of activator-inhibitor and chemotactic reaction diffusion systems.  Generically, these system have been posed on a fixed domain. Many applications in biological and chemical process are mediated by an evolving membrane.  This includes  fluid membranes, such as lipid bilayers, composed of thin layers of materials that diffuse relatively easily in the in-plane direction. The goal of this work is to present a minimal framework for the derivation of gradient flows of energies associated to interfaces as a tool for quasi-steady analysis of the evolving interface. 

The local self-energy of an infinitely thin fluidic membrane does not depend upon deformation from a reference configuration but rather on its intrinsic local variables -- the metric and curvatures as defined by the first and second fundamental forms.    The iconic example is the Canham-Helfrich energy, \cite{Can_70}-\cite{Hel_73}, which models a two-dimensional interface embedded in $\mbbR^3$ through a simple quadratic dependence of the interfacial energy on the two membrane curvatures.  Subject to various constraints, its $L^2$-gradient flows give variants of a Willmore flow,  \cite{Rupp_24}.
%In many applications of interest pattern formation of molecular elements, such as proteins, occurs on and interacts with host membrane. This includes phase separation of surface proteins on a lipid bilayer [cite giant membrane work, Axel Voigt and J. Lowengrub], morphogenesis of Golgi apparatus and endoplasmic reticulum, \cite{Shemesh-14}, the self arrangement of pancake stacks of thylakoid membranes that host photosynthetic processes through electrostatic interactions. 
The complexity induced by the parameterization of an interface embedded in $\mbbR^n$ can inhibit model development and analysis of more complicated surface energies. Frameworks set in higher dimensional settings are available in the literature, \cite{Barrett_2020} but are generally geared towards computational implementation.

We present a minimal framework for the derivation of gradient flows of interfacial energies embedded in $\mbbR^2$, casting them as dynamical systems of the intrinsic variables of the interface. We consider a general local energy incorporating terms up to second order in surface diffusion, and then extend this to incorporate two-point interaction kernels that describe membrane self-interactions at a distance. Throughout we emphasize how the tools from quasi-adiabatic reduction can be brought to bear upon these important classes of problems.  

 We present a systematic derivation of first and second variations of these energies with respect to the intrinsic coordinates. This is conducted within the framework of classical calculus of variations, with the conditions that the intrinsic coordinates generate a smoothly closed curve captured through an explicit set of constraints.  The $L^2$ gradient flow is described as an evolution system for the intrinsic variables,  highlighting the relation between the arc length variation of the energy and the first integral of the functional variation at an equilibrium.
Reducing the scope to a family of energies that are first order in surface gradient of curvature, explicit relations for the second variation are extracted and their connection to the linearization of the $L^2$ gradient flow at equilibrium and quasi-equilibrium are investigated. As an application we introduce a faceting energy, essentially an Allen-Cahn energy in the interfacial curvature, emphasizing the parallels between this Allen-Cahn curvature gradient flow and that of the famous Cahn-Hilliard system on a fixed domain.

In Section\,\ref{s:TP}, we present a class of two point energies that include adhesion-repulsion effects. These are modeled through interaction kernels that characterize the energetic cost of surface self-proximity through double surface integrals. These model electrostatic interaction at longer range and hard-core repulsion at short range, evocative of Lenard-Jones potentials. We derive expressions for their gradient flow and apply them to an energy combining a Canham-Helfrich curvature and a surface area penalty term. We show formally that the system stabilizes two points of a curve at closest approach at a prescribed distance. Surprisingly, the two-point energies induce a set of non-trivial curve-constraints associated to energy invariance under rigid body motions, as outlined in Section\,\ref{s:TP_invar}.  Simulations of the associated $L^2$ gradient flow show a rich  regime of quasi-steady folded states, similar to labyrinthine patterns. The gradient flow navigates  these labyrinthine states to arrive at a final folded equilibrium state.
\begin{figure}
\begin{center}
\includegraphics[height=2.5in]{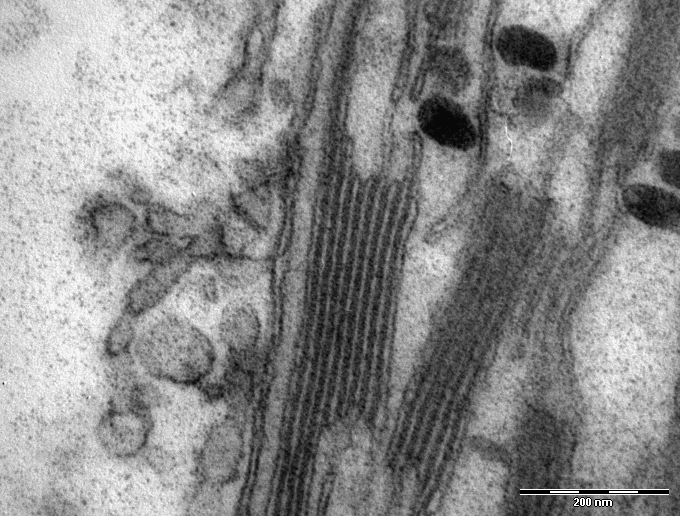}
\end{center}
\caption{ An in-situ transmission electron micrograph of ``pancake stacks'' of thylakoid membranes within a chloroplast of an Anemone leaf. The stacks self assemble from inter-membrane electrostatic forces and act as light absorbing panels for photosynthesis, \cite{WC_chloro}}
\label{f:Thylakoid}
\end{figure}
The Canham-Helfrich adhesion-repulsion energy captures stacking and folding effects commonly seen in cellular organelles, such as the Golgi apparatus \cite{Golgi-17}, the endoplasmic reticulum \cite{Shemesh-14}, and thylakoid membranes that enclose the chloroplasts that conduct photosynthesis, see Figure\,\ref{f:Thylakoid} and \cite{AS-11}. It also has applications to folding of charged polymer chains \cite{Tag24}.
The appendix presents a discussion of the numerical scheme and technical lemmas that support the main text.

%Diffusion of embedded molecules on evolving interface is a complex process. The embedded material responds to its self-energy, but also modifies the energy of the interface, and is advected by its motion. We address this in a general framework, both in the context of a locally extensible membrane and for a locally inextensible membrane. The interface energies are  multiscale, they also arise as limits of multiscale bulk energies that have front or pulse forming structures. 

%We illuminate the energy dissipation role of normal and tangential motion of the interface and its embedded materials. We show that in the absence of embedded materials the system evolution is independent of the tangential interface motion. In the presence of embedded material any choice of tangential motion that advects the embedded material with the interface does not modify the energy dissipation mechanism. However the choice of tangential velocity can impact the system evolution. 

\section{General Formalism for Curve Evolution}
We focus on an  evolving curve $\Gamma$ parameterized by a map $\gamma:\mbbS\times[0,T]\mapsto{\mathbb R}^2$
where $\mbbS$ is the circle of unit circumference.  Denoting the unit tangent vector and outward unit normal vector of the curve at the point $\gamma(s,t)$ by $\tau(s,t)$ and $n(s,t)$ respectively. We denote arc length by
\beq\label {e:metric-def}
g:=|\partial_s\gamma|.
\eeq
In $\mbbR^d$, it is natural to use the metric, defined through the first fundamental form of the surface. In $\mbbR^2$ using arc-length affords some simplification in the formulation,  in $\mbbR^2$ the metric reduces to the square of the arc-length.
As a minor notational point, we refer to $g$ as the arc length as a shortened version of the arc length parameterization. We normalize the gradients with arc-length as this leads to a simpler formulation, in particular surface area scales linearly with $g.$ With this caveat we define the surface gradient
$$ \nabla_s f:= \frac{1}{g}\partial_s f,$$
the Laplace-Beltrami operator
$$\Delta_s f :=  \frac{1}{g} \partial_s\left( \frac{1}{g}\partial_s f \right)=\nabla_s(\nabla_s f),$$
the surface measure
 $$\rmd\sigma=g\,\rmd s,$$
and the curvature
\beq\label{e:curv-def}
\kappa:=-\nabla_s\vtau\cdot n=-|\nabla_s \vtau|.
\eeq
Under these definitions a circle admits a positive constant curvature. 
The following relations hold
\beq
\label{e:tau-n}
\nabla_s\gamma=\vtau, \quad \nabla_s\vtau=-\kappa n, \quad \nabla_s n=\kappa \vtau.
\eeq
The curve $\gamma$ is uniquely defined, up to rigid body motion, by its intrinsic coordinates $U=(\kappa,g)^t$ which maps $\mbbS\times\mbbR_+$ into $\mbbR^2.$ Here and below superscript $t$ denotes transpose of a vector in $\mbbR^d.$
The deformation of the curve $\gamma$ can be specified through an extrinsic vector field that establishes  normal and tangential velocities, denoted  $\mbbV=(\mbbV^n(s,t),\mbbV^\tau(s,t))^t\in\mbbR^2$. This corresponds to the evolution 
\beq \label{e:gam-vel}
\gamma_t=\mbbV^n n+\mbbV^\tau \vtau.
\eeq
The extrinsic vector field  $\mbbV$ defined on $\mbbS\times[0,T]$ induces the intrinsic vector field  $U_t$ through the relations
\beq\label{e:FSFF}
\begin{aligned}
\kappa_t &= -(\Delta_s + \kappa^2) \mbbV^n+ \mbbV^\tau\nabla_s \kappa,\\
g_t & =  g (\nabla_s\mbbV^\tau +\kappa \mbbV^n), \end{aligned}
\eeq
see \cite{Pismen_06}[Sec 3.4].   
%A more general overview of computational approaches to interfaces immersed in $\mbbR^d$ can be found in \cite{Barrett_2020}. 

Denote the angle of the tangent vector $\tau(s)$ to the x-axis by $\theta=\theta(s,t)$. It is helpful to recast the unit tangent and normal vectors $\vtau$ and $n$ as explicit functions of $\theta$:
$$ \vtau(\theta):=\begin{pmatrix} \cos\theta\cr \sin \theta\end{pmatrix}, \quad n(\theta):=\begin{pmatrix} \sin\theta\cr -\cos \theta\end{pmatrix},$$
which satisfy the trigonometric relations
\beq
\label{e:trig}
\vtau\,^\prime(\theta)=-n(\theta), \quad \vtau\,^{\prime\prime}(\theta)=-\vtau(\theta),
\eeq
where here and below $\prime$ denotes differentiation with respect to $\theta.$ Since $\partial_s\vtau=\vtau'(\theta)\theta_s=-n(\theta)\theta_s,$ the expression for  $\nabla_s\vtau$ from \eqref{e:tau-n} yields the relation
\beq\label{e:theta-kappa}
 \nabla_s \theta=\kappa.
 \eeq

A curve $\gamma:\mbbS\mapsto \mbbR^2$ is smoothly closed if its end points and tangent vectors both align at the point of periodicity of $\mbbS$. More specifically defining the jump of a function $f:\mbbS\mapsto\mbbR,$
$$\jmp{f}:=f(|\mbbS|)-f(0),$$
then a pair $U=(\kappa,\gamma)^t$ of  $\mbbS$-periodic curvature $\kappa$ and arc-length $g$ corresponds to a curve $\gamma$ with a smoothly closed images $\Gamma$ if and only if
\begin{align}
  \jmp{\gamma}= \int_\mbbS \vtau(\theta)\,\rmd \sigma &=0, \label{e:C1}\\
  \jmp{\theta}=  \int_\mbbS \kappa(s)\rmd \sigma &=2\pi.\label{e:C2}
\end{align}
This forms the admissible class of curvature-arc length pairs that lead to smoothly closed images,
\beq \label{e:cA-def}
\cA:=\{U\in H^2(\mbbS)\, \bigl|\, \cC(U)=0\},
\eeq
where $\cC:H^1(\mbbS)\mapsto \mbbR^3$ is defined as $\cC(U)=(\jmp{\gamma},\jmp{\theta}-2\pi)^t$. We set aside issues of self-intersection of the curve, these must be handled on an energy and flow dependent manner.

To understand the evolution of these jump quantities, it is instructive to take the time derivative of the first equality in \eqref{e:tau-n}. Recalling the $g$ dependence of $\nabla_s$ yields
$$
- \frac{g_t}{g} \nabla_s \gamma +\nabla_s(\mbbV^n n+\mbbV^\tau\vtau) =\vtau^\prime(\theta)\theta_t=-n\theta_t,$$
and distributing the derivatives we obtain
$$- \frac{g_t}{g}\vtau +n\nabla_s\mbbV^n +\kappa\vtau \mbbV^n +\vtau \nabla_s \mbbV^\tau-\kappa \mbbV^\tau n =-n\theta_t.
$$ 
Taking the dot product with the normal $n$ establishes the $\theta$ evolution equation
\beq 
\label{e:theta_t}
\theta_t = -\nabla_s \mbbV^n +\kappa\mbbV^\tau.
\eeq
Since  the curvature $\kappa$ and the velocity $\mbbV$ are $\mbbS$-periodic it follows that $\jmp{\theta_t}=0.$ 
%It is good to note that the $\theta$-evolution equation is not a zero material derivative relationship.

\begin{lemma}
Consider a curve evolving under a prescribed  $\mbbS$-periodic extrinsic velocity $\mbbV.$ 
If the initial curve $\gamma\bigl|_{t=0}$ satisfies $\jmp{\theta}\bigl|_{t=0}=2\pi$ then the jump quantities $\jmp{\gamma}$ and $\jmp{\theta}$ are invariant for smooth solutions of the flow \eqref{e:FSFF} subject to $\mbbS$-periodic boundary conditions. In particular a curve that is initially smoothly closed will remain smoothly closed.
\end{lemma}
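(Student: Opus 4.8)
The plan is to differentiate each jump quantity in time and exploit the $\mbbS$-periodicity of the data, treating $\jmp{\theta}$ first, since the invariance of $\jmp{\gamma}$ will rely on the winding value $\jmp{\theta}=2\pi$. For smooth solutions one may interchange $\partial_t$ with endpoint evaluation, so that $\frac{\rmd}{\rmd t}\jmp{f}=\jmp{f_t}$ for any smooth $f$; I would record this interchange explicitly at the outset.

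First I would treat $\jmp{\theta}$. Inserting the evolution equation \eqref{e:theta_t} gives
$$\frac{\rmd}{\rmd t}\jmp{\theta}=\jmp{\theta_t}=\jmp{-\nabla_s\mbbV^n+\kappa\mbbV^\tau}.$$
Because $\mbbV^n$, $\mbbV^\tau$, $\kappa$, and $g$ are all $\mbbS$-periodic, the right-hand side is $\mbbS$-periodic and its jump vanishes; this is precisely the observation $\jmp{\theta_t}=0$ recorded just below \eqref{e:theta_t}. Hence $\jmp{\theta}$ is constant in time, and the hypothesis $\jmp{\theta}\big|_{t=0}=2\pi$ forces $\jmp{\theta}(t)=2\pi$ for all $t$, which is exactly constraint \eqref{e:C2}.

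Next I would treat $\jmp{\gamma}$. Differentiating and using the extrinsic evolution \eqref{e:gam-vel},
$$\frac{\rmd}{\rmd t}\jmp{\gamma}=\jmp{\gamma_t}=\jmp{\mbbV^n n(\theta)+\mbbV^\tau\vtau(\theta)}.$$
Evaluating at $s=0$ and $s=|\mbbS|$, periodicity of $\mbbV$ supplies $\mbbV^n(|\mbbS|)=\mbbV^n(0)$ and $\mbbV^\tau(|\mbbS|)=\mbbV^\tau(0)$, while $\vtau(\theta)$ and $n(\theta)$ are $2\pi$-periodic in $\theta$. Since the first step fixes $\theta(|\mbbS|)=\theta(0)+2\pi$, the trigonometric forms of $\vtau$ and $n$ yield $\vtau(\theta(|\mbbS|))=\vtau(\theta(0))$ and $n(\theta(|\mbbS|))=n(\theta(0))$, so the two endpoint contributions cancel and $\jmp{\gamma_t}=0$. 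Thus $\jmp{\gamma}$ is invariant, and combined with the previous step an initially smoothly closed curve, i.e.\ one satisfying \eqref{e:C1}--\eqref{e:C2}, remains smoothly closed.

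The delicate point, and the reason the hypothesis on $\jmp{\theta}\big|_{t=0}$ cannot be dropped, is the coupling between the two jumps: $\mbbS$-periodicity of $\mbbV$ by itself does not give $\jmp{\gamma_t}=0$, because $\vtau$ and $n$ are evaluated at $\theta$, whose endpoint values differ by $\jmp{\theta}$. Only the value $\jmp{\theta}=2\pi$ makes the tangent and normal frames agree at the two endpoints and lets the boundary terms cancel. Consequently the ordering is essential: I must pin down $\jmp{\theta}=2\pi$ before addressing $\jmp{\gamma}$. Beyond the regularity needed to differentiate the jumps along the flow, I do not anticipate any further analytic obstacle.
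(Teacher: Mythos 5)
Your proof is correct, and the first half (invariance of $\jmp{\theta}$ via \eqref{e:theta_t} and periodicity) is identical to the paper's. For the second half you take a genuinely different and more elementary route: you differentiate the endpoint difference directly, $\frac{\rmd}{\rmd t}\jmp{\gamma}=\jmp{\gamma_t}=\jmp{\mbbV^n n+\mbbV^\tau\vtau}$, and observe that periodicity of $\mbbV$ together with $\jmp{\theta}=2\pi$ forces the frames $(\vtau,n)$ at the two endpoints to coincide, so the boundary contributions cancel. The paper instead works with the integral representation $\jmp{\gamma}=\int_\mbbS\vtau(\theta)\,\rmd\sigma$, substitutes the derived evolution equations for $\theta_t$ and $g_t$, and kills the integrand by an integration by parts together with the identity $\vtau''=-\vtau$. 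What your version buys is brevity and a transparent identification of exactly where the hypothesis $\jmp{\theta}\bigl|_{t=0}=2\pi$ is used — a point you articulate well. What the paper's version buys is that it stays entirely at the level of the intrinsic system \eqref{e:FSFF}: since $\gamma$ and $\theta$ are reconstructed from $(\kappa,g)$ by integration, verifying $\frac{\rmd}{\rmd t}\int_\mbbS\vtau\,\rmd\sigma=0$ directly from the $(\kappa_t,g_t)$ equations shows the invariance is a property of the intrinsic flow itself, without invoking the extrinsic evolution \eqref{e:gam-vel} of the endpoints. Since the lemma's hypothesis explicitly posits a curve evolving under a prescribed extrinsic velocity, your reading is legitimate and your argument is complete; only note that $\jmp{\theta}\in 2\pi\mbbZ$ (not specifically $2\pi$) is what is actually needed for the frame cancellation, and that the asserted interchange $\frac{\rmd}{\rmd t}\jmp{f}=\jmp{f_t}$ is exactly the smoothness assumption the lemma grants you.
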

\begin{proof}
From \eqref{e:theta_t} $\theta_t$ is $\mbbS$-periodic so that
$$   \frac{\rmd}{\rmd t} \jmp{\theta}=\jmp{\theta_t}=0.$$
Thus if $\jmp{\theta}\bigl|_{t=0}=2\pi$ then $\theta$ satisfies \eqref{e:C2} for all $t>0.$ In particular we deduce that the tangent $\vtau$ and normal $n$ are $\mbbS$-periodic.  
This allows us to integrate-by-parts in integrals involving $\theta.$ Specifically we evaluate the time derivative of $\jmp{\gamma}$,
$$\begin{aligned}
   \frac{\rmd}{\rmd t}\jmp{\gamma}=\frac{\rmd}{\rmd t} \left(\int_\mbbS \vtau \,\rmd \sigma \right)&=
   \int_\mbbS \left(\vtau'(\theta)\theta_t  + \frac{\vtau(\theta)g_t}{g}\right)\, \,\rmd \sigma,\\
   &= \int_\mbbS \vtau'(\theta)(-\nabla_s\mbbV^n +\kappa\mbbV^\tau) +
\vtau(\theta)(\nabla_s\mbbV^\tau+\kappa \mbbV^n)\,
\, \rmd \sigma.\\
   \end{aligned}
$$
Using \eqref{e:theta-kappa} to replace $\kappa$ we integrate-by-parts
$$\begin{aligned}
\frac{\rmd}{\rmd t}\jmp{\gamma} 
&= \int_\mbbS \vtau'(\theta)(-\nabla_s\mbbV^n+\nabla_s\theta \mbbV^\tau) +
\vtau(\theta)(\nabla_s\mbbV^\tau+\nabla_s\theta\mbbV^n)\,\rmd \sigma,\\
&=\int_\mbbS (\vtau''(\theta)+\vtau(\theta))(\nabla_s\theta \mbbV^n)+\nabla_s(\vtau(\theta)\mbbV^\tau)\,\rmd \sigma =0.
\end{aligned}$$
The last equality follows from the identity  \eqref{e:trig}, and the periodicity of $\theta$. 
\end{proof}

The first and second fundamental from evolution equations \eqref{e:FSFF} can be written as a linear map from the extrinsic  vector field $\mbbV=(\mbbV^n,\mbbV^\tau)^t$ to the intrinsic vector field $U_t=(\kappa_t, g_t)^t,$ 
\beq\label{e:In-Ext_VF} 
U_t=%\begin{pmatrix}    \kappa_t \cr g_t \end{pmatrix} =
%\begin{pmatrix} 1 &0 \cr 0 &-g\end{pmatrix}
\cM \mbbV,
%\begin{pmatrix}
% \mbbV^n \cr \mbbV^\tau   
%\end{pmatrix},
\eeq
through the operator
\beq \label{e:Meq}
\cM:= \begin{pmatrix} \mrG & \nabla_s \kappa \cr
  g\kappa & g\nabla_s
 \end{pmatrix}.
\eeq
Here we have introduced the surface Helmoltz operator $\mrG:=-\Delta_s -\kappa^2.$ The operator $\mrG$ is not strictly positive, in particular
$$ \langle \mrG 1,1\rangle_{L^2(\mbbS)} = -\langle 1, \kappa^2\rangle_{L^2(\mbbS)}<0,$$
where the pairing denote the usual $L^2(\mbbS)$ inner product. The operator satisfies the identities
\beq \label{e:G_ident}
\mrG \vtau = n\nabla_s \kappa, \hspace{0.5in}
\mrG n =-\vtau \nabla_s \kappa,
\eeq
which allow a characterization of the kernel of $\cM.$
\begin{lemma}\label{lem:kerM}
For any smooth curve $\gamma$ the operator $\cM$ subject to periodic boundary conditions has a three dimensional kernel corresponding to rigid body motions of the curve, 
$$ \ker\cM =\Span\left\{\begin{pmatrix}e_1\cdot n \cr e_1\cdot \vtau\end{pmatrix},\begin{pmatrix}e_2\cdot n \cr e_2\cdot \vtau\end{pmatrix}, \begin{pmatrix}\gamma\cdot \vtau \cr -\gamma\cdot n\end{pmatrix} \right\}\subset L^2(\mbbS),$$
where $\{e_1,e_2\}$ denote the canonical basis of $\mbbR^2.$
\end{lemma}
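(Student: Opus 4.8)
The plan is to identify the kernel equation $\cM\mbbV=0$ with the condition that the extrinsic field $\mbbV=(\mbbV^n,\mbbV^\tau)^t$ produces no motion of the intrinsic coordinates, that is $\kappa_t=0$ and $g_t=0$ in \eqref{e:FSFF}. Geometrically such fields ought to be exactly the restrictions to $\Gamma$ of the infinitesimal rigid motions of $\mbbR^2$, a three-parameter family, so the argument splits into (i) verifying that the three listed generators lie in $\ker\cM$, and (ii) showing that $\ker\cM$ can be no larger. Step (i) is the routine direction; step (ii) is the main obstacle.

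For step (i) I would substitute each candidate directly into \eqref{e:FSFF}. For the translation fields $\mbbV=(e_i\cdot n,\,e_i\cdot\vtau)^t$ the constancy of $e_i$ lets $\mrG$ and $\nabla_s$ pass onto the frame vectors, so the identities \eqref{e:G_ident} together with $\nabla_s\vtau=-\kappa n$ from \eqref{e:tau-n} collapse both $\kappa_t$ and $g_t$ to zero. For the rotational field $\mbbV=(\gamma\cdot\vtau,\,-\gamma\cdot n)^t$ the same relations \eqref{e:tau-n} are applied through the product rule: one computes $\nabla_s(\gamma\cdot n)=\kappa(\gamma\cdot\vtau)$ and $\nabla_s(\gamma\cdot\vtau)=1-\kappa(\gamma\cdot n)$, feeds these into $\mrG(\gamma\cdot\vtau)=(\nabla_s\kappa)(\gamma\cdot n)$, and finds the two contributions to $\kappa_t$ cancel while the $g_t$ equation cancels directly. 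Alternatively one can bypass computation by noting that a rigid motion is an isometry of $\mbbR^2$ and hence leaves the intrinsic data $(\kappa,g)$ pointwise invariant, so its infinitesimal generator lies in $\ker\cM$ by definition.

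For step (ii) I would read $\cM\mbbV=0$ as the linear ODE system on $\mbbS$ given by $\mrG\mbbV^n+\mbbV^\tau\nabla_s\kappa=0$ and $\nabla_s\mbbV^\tau+\kappa\mbbV^n=0$. Because $g=|\partial_s\gamma|>0$ for a regular curve the leading coefficients never vanish: the first is a genuine second-order equation for $\mbbV^n$ and the second a genuine first-order equation for $\mbbV^\tau$, so the total order is three and the full solution space is three-dimensional. Since the three rigid-motion fields are already $\mbbS$-periodic solutions, the periodic solution space, which is exactly $\ker\cM$, contains them yet sits inside the three-dimensional solution space, forcing equality. It then remains to check that the three generators are linearly independent: a vanishing combination is equivalent to $A\gamma(s)+b\equiv 0$ with $A$ a constant $2\times2$ skew matrix and $b\in\mbbR^2$ constant, and differentiating in $s$ gives $g\,A\vtau\equiv 0$; as $g>0$ and a nonzero skew matrix sends the unit vector $\vtau$ to $\pm n\neq 0$, this forces $A=0$ and then $b=0$.

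The delicate point is this dimension bound: one must ensure the system is a regular order-three ODE (guaranteed by $g>0$) and that passing from the full solution space to the periodic subspace loses no dimension, which is precisely why exhibiting three independent periodic solutions in step (i) is essential. As a more hands-on alternative to the order count, I could argue directly: $g_t=0$ together with $\kappa=\nabla_s\theta$ from \eqref{e:theta-kappa} gives $\kappa_t=\nabla_s\theta_t$, so $\kappa_t=0$ forces $\theta_t\equiv\omega$ constant in $s$; then $\vtau_t=-\omega n$ by \eqref{e:trig}, whence $\partial_s\gamma_t=\partial_t(g\vtau)=-\omega g\,n$, which integrates to an infinitesimal rigid motion $\gamma_t=A\gamma+b$ with $A$ constant and skew, recovering the three-parameter family intrinsically and exhibiting every kernel element as a rigid motion.
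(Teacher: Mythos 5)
Your proposal is correct and follows essentially the same route as the paper: verify that the three rigid-motion generators lie in $\ker\cM$ using the identities \eqref{e:G_ident} and \eqref{e:tau-n}, then cap the kernel dimension at three because $\cM$ is a third-order operator. You supply details the paper leaves implicit (the role of $g>0$ in making the ODE system regular of order three, and the explicit linear-independence check), and your closing intrinsic argument via $\theta_t\equiv\omega$ is a nice self-contained alternative, but the core argument is the same.
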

\begin{proof}
For any fixed vector $v_0\in\mbbR^2$ the extrinsic vector field $(\mbbV^n,\mbbV^\tau):=(v_0\cdot n, v_0\cdot\vtau)$ is the infinitesimal generator of rigid translation of $\Gamma$ in the direction $v_0$. The identities \eqref{e:G_ident} and the relations \eqref{e:tau-n} yield
$$\cM \begin{pmatrix} v_0\cdot n \cr v_0\cdot \vtau\end{pmatrix}=0.$$
The rigid body rotation of curve induced by $\gamma$ at fixed rate about the origin induces the extrinsic velocity  $\mbbV=(\mrJ \gamma\cdot n, \mrJ \gamma \cdot\vtau)^t$ where $\mrJ$ is the $2\times2$ matrix for rotation by $\pi/2.$ The $\mrG$ identities and the relations \eqref{e:tau-n} again yield
$$ \cM \begin{pmatrix}\mrJ\gamma\cdot n\cr
\mrJ\gamma\cdot\vtau\end{pmatrix}= 0. $$
However $J\gamma \cdot n=\gamma\cdot \vtau$ while $J\gamma\cdot\vtau=-\gamma\cdot n.$ These three vectors are linearly independent and since $\cM$ is a third order operator, its eigenspaces cannot be more than three dimensional, hence these vectors span the kernel. 
\end{proof}

\subsection{Reparameterization}
The image of $\gamma$ is denoted
$$\Gamma(\gamma):=\{\gamma(s)\,\bigl|\, s\in\mbbS\}.$$
The image  is independent of re-parameterization of the map $\gamma.$ More specifically, let $m:\mbbS\mapsto\mbbS$ be 1-1 and continuously differentiable. Without loss of generality we may assume that $\partial_s m>0$, uniformly on $\mbbS.$ The image 
 of the original and reparameterized map coincide $\Gamma(\gamma)=\Gamma(\tilde\gamma)$. From the chain rule
$$ \partial_s \tilde\gamma= \partial_s\gamma \partial_s m,$$
so that 
$$\tg(s):=|\partial_s \tgam|=g(m(s)) \partial_s m.$$
In particular given an arc length $g$ it can reparameterized into a $\mbbS$-constant $\tg$ via the map $m$ which solves the nonlinear ODE
$$ \partial_s m= \frac{\tg}{g(m(s))},$$
subject to $m(0)=0$. The constant value of $\tg$ is chosen to satisfy the periodicity condition $m(|\mbbS|)=|\mbbS|$ and is unique since the value $m(|\mbbS|)$ increases monotonically with $\tg$. This is the parameterization taken in the computations presented in Section\,\ref{s:numer}, however in the analysis  the arc length is treated generally to emphasize the structure of the systems.

Denoting $\ts=m(s),$  then for any $f:\mbbS\to X$ its reparameterized form $\tilde f:=f\circ m,$ satisfies
$$\tilde\nabla_{s}\tilde f:= \frac{1}{\tg}\partial_s \tilde f = \frac{1}{g}\frac{1}{\partial_s m}\partial_s f \partial_s m= (\nabla_s f)\circ m.$$
Similarly, under the change of variables, $\rmd\tilde{\sigma}=\rmd\sigma$ while the reparameterized tangent, $\tilde\tau=\tau\circ m,$ normal $\tilde n=n\circ m,$ and curvature $ \tilde\kappa=\kappa\circ m$ satisfy the analogous forms of \eqref{e:tau-n}. In particular the energies considered in the next section are independent of parameterization.
The equivalence of reparameterization and tangential extrinsic flow is established in Section\,\ref{s:reparam} of the Appendix.

\section{Critical points, Local Minima, and Gradient Flows of Local Intrinsic Energies}

A local intrinsic energy depends only upon the first and second fundamental forms, specifically the curvature and arc length and their local derivatives.
A generic local-intrinsic energy involving the curvature and its surface derivatives up to second order takes the form
\beq\label{e:LIE} \cE(U):= \int_\mbbS F(\Delta_s\kappa, \nabla_s\kappa,\kappa) \rmd \sigma,
%= \int_\mbbS \frac{\alpha g}{2}\kappa_s^2+\frac{f(\kappa)}{g}\,\rmd s.
\eeq
where  $U=(\kappa,g)^t\in\cA$ and $F:\mbbR^3\mapsto \mbbR$ is assumed to be smooth. Although the value of $\cE$ is independent of the choice of arc length $g$, it appears formally in the scaling of the surface measure $\rmd\sigma$ and of the surface differentials and in the constraints required for curve closure. 
We derive intrinsic and extrinsic representation for the variational derivatives of $\cE$ and define the associated $L^2(\mbbS)$ gradient flow of $\cE$  subject to the closed curve constraints.
%$f_1,f_2:\mbbR\mapsto\mbbR$ with $f_1\geq\alpha_0>0.$
\subsection{Lagrange Multiplier Formulation}
The functional $\cE$ is defined over the admissible space $\cA$ comprised of smooth arc length-curvature pairs that satisfy the curve closure relations \eqref{e:C1}-\eqref{e:C2}. Bounded energy generally does not enforce smoothness on the arc length, however the energy is invariant under reparameterization. In the time independent minimization problem for $\cE$ this allows one to restrict the admissible set to pairs $U$ with arc length $g$ that is constant over $\mbbS.$ Nonetheless, it is instructive for the gradient flow problem to consider the perturbation problem for $\cE$ without imposing spatially constant arc length. We consider a general class of admissible perturbation paths 
\beq\label{e:path}
\phi(\del)= U+\del U_1 +\frac{\del^2}{2} U_2 +O(\del^3),
\eeq
%\bpm \kappa_*\cr g\epm +\del \bpm \kappa_1\cr g_1\epm+\frac{\del^2}{2} \bpm \kappa_2\cr g_2\epm +O(\del^3),$$ 
for $\del\in\mbbR$ with tangent $\partial_\del\phi(0)=U_1.$ %$(\kappa_1,g_1)^t$.
The curve closure relations constrain these paths, so we expand both the energy and the constraint
\begin{align}
\label{e:E-Taylor}
\cE(\phi(\del))&=\cE(U)+
\del \langle \nabla_\mrI\cE, U_1\rangle_{L^2(\mbbS)} +
\frac{\del^2}{2}\left(\left\langle \nabla_\mrI\cE,U_2\right\rangle_{L^2(\mbbS)} +\left\langle \nabla_\mrI^2\cE\, U_1,U_1\right\rangle_{L^2(\mbbS)}\right)+O(\del^3),\\
\label{e:C-Taylor}
\cC(\phi(\del))&= \del \langle \nabla_\mrI\cC, U_1\rangle_{L^2(\mbbS)} +
\frac{\del^2}{2}\left(\left\langle \nabla_\mrI\cC,U_2\right\rangle_{L^2(\mbbS)} +\left\langle \nabla_\mrI^2\cC\, U_1,U_1\right\rangle_{L^2(\mbbS)}\right)+O(\del^3),
\end{align}
where $\nabla_\mrI$ and $\nabla^2_\mrI$ denote first and second  $L^2(\mbbS)$-variational derivatives with respect to variations in the intrinsic coordinate $U$. Unless otherwise indicated, all instances of $U$ correspond to its value at the perturbation point $\phi(0)$, in particular this applies to the measure $\sigma=\sigma(g)$ associated to $L^2(\mbbS)$. Since $\phi(\del)\in\cA$ we have $\cC(\phi)=0$ and equating orders of $\del$ in the $\cC$ expansion yields three orthogonality conditions that characterize the tangent plane
$$\cA'(U)=\left\{U_1\in H^2(\mbbS)\,\bigl|\, U_1\bot\nabla_\mrI\cC(U)\right\}.$$
At second order in $\delta$, the system places the correction term $U_2$ in a quadratic relation with $U_1,$
\beq\label{e:AbsC2} \left\langle\nabla_\mrI \cC, U_2\right\rangle_{L^2(\mbbS)}=-
\left\langle \nabla_\mrI^2\cC\, U_1,U_1\right\rangle_{L^2(\mbbS)}.
\eeq
The function $U\in\cA$ is a critical point of $\cE$ if and only if the coefficient of $\del$ in the energy expansion is zero, equivalently
$\nabla_\mrI \cE\bot U_1$ for all $U_1\in\cA'(U).$  
This requires that the first variation of $\cE$ lies in the row space of $\nabla_\mrI\,\cC(U).$ Equivalently, critical points of $\cE$ satisfy the, generally elliptic, PDE 
\beq
\label{e:Abs-CP} \nabla_\mrI \cE(U)=\Lambda\cdot \nabla_\mrI\,\cC(U),
\eeq
where $\Lambda=(\lambda_1,\lambda_2,\lambda_3)\in\mbbR^3$ is a Lagrange multiplier associated to the constraint.
If in addition $U$ is a local minima of $\cE$ it is necessary that the  $\del^2$-coefficient in the expansion be non-negative. The quadratic correction $U_2$ can be removed from this term via the critical point equation and the quadratic relation with $U_1.$ That is applying first \eqref{e:Abs-CP} and then \eqref{e:AbsC2} we have
\beq \label{e:Abs-2ndVar1}
\left\langle \nabla_\mrI\cE,U_2\right\rangle_{L^2(\mbbS)} =
\left\langle \Lambda\cdot \nabla_\mrI \cC,U_2\right\rangle_{L^2(\mbbS)} =-\Lambda\cdot
\left\langle \nabla_\mrI^2\cC\, U_1,U_1\right\rangle_{L^2(\mbbS)}.
\eeq
The non-negativity of the coefficient of $\del^2$ in the energy expansion is equivalent to the condition
\beq 
\label{e:Bilinear}
\left\langle \left(\nabla_\mrI^2\cE-\Lambda\cdot\nabla_\mrI^2 \cC\right)\, U_1,U_1\right\rangle_{L^2(\mbbS)}\geq 0,
\eeq
for all $U_1\in\cA'(U).$
We introduce the Lagrange second variational operator, $\cL_\Lambda$ associated to a critical point $U$ solving \eqref{e:Abs-CP} with Lagrange multiplier $\Lambda$,
\beq\label{e:Cons_2ndVar}
\cL_\Lambda:= \nabla_\mrI^2\cE -\Lambda\cdot \nabla_\mrI^2 \cC.
\eeq
This corresponds to the second variation of the Lagrange multiplier energy
\beq\label{e:LME}
\cE_{\Lambda}:=\cE-\Lambda\cdot\cC.
\eeq
In the next two sections we make these calculations explicit.
\subsection{Variation of the Constraints}
To compute the first and second variation requires regular expansions the surface measure and surface diffusion operators.
To $O(\del^3)$ the perturbed surface measure $\rmd\sigma^\delta:=\rmd\sigma(\phi(\delta))$ expands as
\beq\label{e:dsig_exp} \begin{aligned}
\rmd \sigma^\delta &= %\Big(g + \del g_1 + \frac{\del^2}{2} g_2 \Big) \rmd s + O(\del^3),\\
 & \Big(1 + \del \frac{g_1}{g} + \frac{\del^2}{2} \frac{g_2 }{g}\Big) g\rmd s + O(\del^3).
 \end{aligned}\eeq
We introduce the differential of $\rmd\sigma$ at $g$ with differential $\tg$
\beq\label{e:dsig1}
  \rmd \sigma_1(\tg) = \frac{\tg}{g}\rmd \sigma
\eeq
so that 
\beq\label{e:dsig_exp2}
\rmd\sigma^\delta = \rmd \sigma+\delta \rmd\sigma_1(g_1)+\frac{\delta^2}{2}\rmd\sigma_1(g_2)+O(\delta^3).
\eeq
The inverse arc length expansion
$$\begin{aligned}
    \frac{1}{g(\delta)} %&= \frac{1}{g} - \del\frac{g_1}{g^2} + \frac{\del}{2} \Big( \frac{2g_1^2}{g^3}-\frac{g_2}{g^2}\Big)+O(\del^3),\\ &
    = \left(1-\del \frac{g_1}{g} +\frac{\del^2}{2}\Big(\frac{2g_1^2}{g^2}- \frac{g_2}{g}\Big)\right)\frac{1}{g}+O(\del^3),
\end{aligned}$$
 yields the surface gradient expansion
% \beq\label{e:nab_exp} \nabla_s= \left(1-\del \frac{g_1}{g} +\frac{\del^2}{2}\Big(\frac{2g_1^2}{g^2}- \frac{g_2}{g}\Big)\right)\nabla_s.\eeq
\beq\label{e:nab_exp2}
 \nabla_s^\delta= \nabla_s + \delta\nabla_{s,1}(g_1) +\frac{\delta^2}{2}\big(\nabla_{s,2}(g_1)+\nabla_{s,1}(g_2)\big) +O(\delta^3),
 \eeq
 where we introduced the operators at $g$ in differential $\tg$ 
 $$\begin{aligned}
     \nabla_{s,1}(\tg)&=-\frac{\tg}{g}\nabla_s,\\
     \nabla_{s,2}(\tg)&=\frac{2\tg^2}{g^2}\nabla_s.
 \end{aligned}$$
The first constraint condition is given in terms of the  tangent angle $\theta$ defined through $\kappa$ via \eqref{e:theta-kappa}. Its $\del$-expansion
$$ \theta(\delta)=\theta+\del \theta_1+\frac{\del^2}{2}\theta_2 +O(\del^3)$$
satisfies
$$\begin{aligned}
\nabla_s^\delta \theta(\delta) &= 
\left(1-\del \frac{g_1}{g} +\frac{\del^2}{2}\Big(\frac{2g_1^2}{g^2}- \frac{g_2}{g}\Big)\right)\nabla_s\theta 
+ \left(\del-\del^2 \frac{g_1}{g}\right)\nabla_s\theta_1 +\frac{\del^2}{2}\nabla_s\theta_2 +O(\del^3),\\
&=\nabla_s\theta +
\del\left(\nabla_s\theta_1-\frac{g_1}{g} \nabla_s\theta\right)+
\frac{\del^2}{2}\left(\frac{2g_1^2-g_2g}{g^2}\nabla_s\theta-\frac{g_1}{g}\nabla_s\theta_1 +\nabla_s\theta_2\right)+O(\del^3).
\end{aligned}$$
We introduce the operator
\beq\label{e:D_def}
\mrD f:=\int_0^s f(\ts)\rmd\sigma(\ts),
\eeq
which inverts $\nabla_s$. From Fubini's Theorem its $L^2(\mbbS)$ adjoint takes the form
$$\mrD^\dag f:=\int_s^Lf(\ts)\rmd\sigma(\ts).$$
Clearly
$$\mrD f+ \mrD^\dag f=\int_\mbbS f\,\rmd \sigma, $$
so when acting on functions with zero mass the operator is skew adjoint $\mrD^\dag=-\mrD.$
Using \eqref{e:theta-kappa} we match orders of $\del$ in this expansion to those of $\kappa$. Integrating the orders of $\del$ with respect to $\rmd\sigma$, and back substituting for $\nabla_s\theta_k$, yields the relations
\beq
\label{e:theta-exp}
\begin{aligned}
\theta(s) &= \mrD\kappa,\\
\theta_1(s) &= 
%\int_0^s \left(\kappa_1 +\frac{g_1}{2g}\kappa_*\right)\,\rmd\sigma_*=
\mrD\left(\kappa_1 +g_1\frac{\kappa}{g}\right),\\
\theta_2(s)&= \mrD\left(\kappa_2 +
g_2\frac{\kappa}{g} -2g_1^2\frac{\kappa}{g^2} +g_1\kappa_1\frac{1}{g}\right).
\end{aligned}
\eeq
Using the $\theta(\delta)$ and $\rmd\sigma^\delta$ expansions \eqref{e:dsig_exp}, the first closure relation admits the asymptotic form
$$\begin{aligned}
     0=\int_\mbbS \vtau^\delta  \rmd \sigma^\delta 
    % &=\int_\mbbS \left(\vtau + \vtau^\prime\left(\del \theta_1+\frac{\del^2}{2}\theta_2\right) +\frac{\del^2}{2}\vtau^{\prime\prime} \theta_1^2\right)
    %  \Big(1 + \del \frac{g_1}{g} + \frac{\del^2}{2} \frac{g_2 }{g}\Big)\rmd \sigma+O(\del^3),\\
 %
=\del\int_\mbbS\left(\vtau^{\prime}\theta_1+\vtau\frac{g_1}{g}\right)\,\rmd \sigma 
      +\frac{\del^2}{2} \int_\mbbS\bigg(\vtau'\theta_2+\vtau''\theta_1^2 +  \frac{g_1\theta_1}{g} \vtau' +\vtau
      \frac{g_2}{g}
      \bigg)\,\rmd \sigma +O (\del^3),
\end{aligned}$$
where $\vtau^\delta=\vtau(\theta(\delta))$ and $\vtau=\vtau(\theta)$. Equating orders of $\del$ yields the integral conditions
\begin{align}
0&= \int_\mbbS\left(\vtau^{\prime}\theta_1+\vtau\frac{g_1}{g}\right)\,\rmd \sigma, \label{e:FC1a}\\
0&= \int_\mbbS\bigg(\vtau'\theta_2+\vtau''\theta_1^2 +  \frac{g_1\theta_1}{g} \vtau' +\vtau
      \frac{g_2}{g}
      \bigg)\,\rmd \sigma.
\label{e:FC2a}\end{align}
Replacing $\theta_1$ with its $\mrD$ representation and taking adjoints yields the integral condition
\beq 0=\int_\mbbS \kappa_1 \mrD^\dag\tau'
+g_1\left(\frac{\kappa}{g} \mrD^\dag\tau' + \vtau\frac{1}{g}\right) \,\rmd\sigma.
\eeq
Using the identity $\vtau''=-\vtau$ the quadratic correction integral condition \eqref{e:FC2a} takes the form
\beq
\begin{aligned}
0&=\int_\mbbS \kappa_2 \mrD^\dag\tau' +
g_2\frac{ \kappa\mrD^\dag\tau'  +\vtau}{g} 
+g_1\kappa_1 \frac{\mrD^\dag\tau' }{g}-2g_1^2 \frac{\kappa\mrD^\dag\tau' +\tau}{g^2} +
g_1\theta_1\frac{\vtau'}{g} -\vtau\theta_1^2\, \rmd \sigma,\\
&= \int_\mbbS \kappa_2 \mrD^\dag\tau'  +
g_2\frac{ \kappa\mrD^\dag\tau'  +\vtau}{g} 
+g_1\kappa_1 \frac{\mrD^\dag\tau' }{g}-2g_1^2 \frac{\kappa\mrD^\dag\tau' +\tau}{g^2}+\\
&\hspace{0.4in} \mrD^\dag\left(\frac{g_1\vtau'}{g}\right)\left(\kappa_1+g_1\frac{\kappa}{g}\right)-\tau\left(\mrD\left(\kappa_1+g_1\frac{\kappa}{g}\right)\right)^2\,\rmd \sigma.
\end{aligned}
\eeq
We introduce the vector functions
 \begin{align}\label{e:Psi1}
 \Psi_1&=\bpm  [\mrD^\dag\tau' ]_1\cr
              \frac{\kappa}{g}[\mrD^\dag\tau' ]_1+\frac{1}{g}[\vtau]_1\epm,\\  
 \Psi_2&=\bpm  [\mrD^\dag\tau' ]_2\cr
              \frac{\kappa}{g}[\mrD^\dag\tau' ]_2+\frac{1}{g}[\vtau]_2\epm, \label{e:Psi2}
\end{align}
where $[F]_j$ denotes the $j$th entry of a vector.
The integral conditions can be written as projections 
\begin{align} \label{e:FC1}
\left\langle U_1, \Psi_j\right\rangle_{L^2(\mbbS)}&=0,\\
\left\langle U_2,
\Psi_j
\right\rangle_{L^2(\mbbS)} 
&=\left\langle \left[\nabla_{\mrI}^2 \cC\right]_j U_1, U_1\right\rangle_{L^2(\mbbS)},
\end{align}
for $j=1,2$. Here the second variation of the constraint system $\cC$ is a $2\times2\times 3$ three-tensor with entries
\beq\label{e:cC-Hessian}
\begin{aligned}
\left[\nabla_{\mrI}^2 \cC\right]_j&=
\bpm 0 & -\frac{1}{2g}[\mrD^\dag\tau' ]_j \cr -\frac{1}{2g}[\mrD^\dag\tau' ]_j & \frac{2}{g^2}[\kappa\mrD^\dag\tau' +\vtau]_j\epm-
\bpm 0 & \frac{[\vtau']_j}{2g}\mrD\smbull\cr
\mrD^\dag\!\left( \frac{[\vtau']_j}{2g}\smbull\right)& 
\frac{[\vtau']_j}{g}\mrD\!\left(\frac{\kappa}{2g}\smbull\right)+
\frac{\kappa}{2g} \mrD^\dag\!\left( \frac{[\vtau']_j}{g} \smbull\right)
\epm+\\
&\hspace{0.2in}
\bpm 
\mrD^\dag\left([\vtau]_j\mrD\smbull\right) & 
\mrD^\dag\left([\vtau]_j\mrD\left(\frac{\kappa}{g}\smbull\right)\right) \cr
\frac{\kappa}{g} \mrD^\dag\left([\vtau]_j\mrD\smbull \right) &
\frac{\kappa}{g}\mrD^\dag\left([\vtau]_j\mrD\left(\frac{\kappa}{g}\smbull\right)\right)
\epm,
\end{aligned}
\eeq
for $j=1,2$.
For the second closure relation, expanding $\kappa(\delta)$ and the surface measure yields
$$
 2\pi=\int_\mbbS 
 \kappa(\delta) \rmd \sigma^\delta =2\pi+\del\int_\mbbS \left(\kappa_1+\frac{g_1\kappa}{g}\right)\rmd \sigma +\frac{\del^2}{2}\int_\mbbS\left(\kappa_2+\frac{\kappa_1g_1}{g}+\frac{\kappa g_2}{g}\right)\rmd \sigma +O(\del^3).
 $$
 Equating orders of $\del$ yields the relations
 \beq\begin{aligned}
 0&=  \int_\mbbS \left(\kappa_1+g_1\frac{\kappa}{g}\right)\rmd\sigma,\\
  0&= \int_\mbbS\left(\kappa_2+g_2\frac{\kappa}{g}+ \frac{\kappa_1g_1}{g}\right)\rmd \sigma.
 \end{aligned}
\eeq
We introduce the vector function
\beq\label{e:Psi3}
\Psi_3 = \bpm 1 \cr \frac{\kappa}{g}\epm,
\eeq
and write the second closure constraint conditions as projection conditions
\begin{align}\label{e:SC1}
    \left\langle U_1, \Psi_3\right\rangle_{L^2(\mbbS)}&=0, \\
    \label{e:SC2}
    \left\langle U_2 , \Psi_3 \right\rangle_{L^2(\mbbS)} &=\left\langle [\nabla_\mrI^2\cC]_3 U_1, U_1 \right\rangle_{L^2(\mbbS)}.
\end{align}
Here the three-tensor $\nabla_{\mrI}^2\cC$ has third entry
\beq \label{e:cC-Hessian3}
\left[\nabla_{\mrI}^2\cC\right]_3=\bpm 0 & -\frac12 \cr -\frac12 & 0\epm.
\eeq

\begin{prop}
    The tangent space of $\cA$ at $U=(\kappa,g)^t\in\cA$ is given by
    \beq\label{e:cA'-def}
    \cA'(U)=\{\Psi_1,\Psi_2,\Psi_3\}^\bot
    =\left\{\cM(U) \begin{pmatrix} \mbbV^n\cr \mbbV^\tau\end{pmatrix}\,\Bigl|\, \mbbV\in H^4(\mbbS)\right\},
    \eeq
    where the vector functions are defined \eqref{e:Psi1}-\eqref{e:Psi2}-\eqref{e:Psi3} and the perpendicular is with respect to the $L^2(\mbbS)$ inner product.
\end{prop}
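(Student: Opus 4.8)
The plan is to prove the two asserted equalities in turn. The identity $\cA'(U)=\{\Psi_1,\Psi_2,\Psi_3\}^\perp$ is little more than bookkeeping: by definition $\cA'(U)$ is the set of $U_1\in H^2(\mbbS)$ annihilating the constraint gradients $\nabla_\mrI\cC(U)$, and the first-order variational computations recorded in \eqref{e:FC1} and \eqref{e:SC1} show precisely that these linearized constraints read $\langle U_1,\Psi_j\rangle_{L^2(\mbbS)}=0$ for $j=1,2,3$; equivalently $\nabla_\mrI\cC(U)=(\Psi_1,\Psi_2,\Psi_3)$. The real content is the second equality $\{\Psi_1,\Psi_2,\Psi_3\}^\perp=\mathrm{Range}\,\cM$, which I would prove by a Fredholm duality argument identifying $\mathrm{span}\{\Psi_1,\Psi_2,\Psi_3\}$ with $\ker\cM^\ast$, where $\cM^\ast$ is the formal $L^2(\mbbS)$-adjoint of $\cM$. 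Note that with respect to $\rmd\sigma$ the operator $\nabla_s$ is skew-adjoint and $\Delta_s$, hence $\mrG$, is self-adjoint, which makes $\cM^\ast$ straightforward to write down.

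For the inclusion $\mathrm{Range}\,\cM\subseteq\{\Psi_1,\Psi_2,\Psi_3\}^\perp$ I would invoke the invariance lemma for the jump quantities. Given any $\mbbS$-periodic field $\mbbV$, the flow \eqref{e:FSFF} satisfies $U_t=\cM\mbbV$, and since $U\in\cA$ has $\jmp{\theta}=2\pi$, that lemma gives $\cC(U(t))\equiv 0$. Differentiating at $t=0$ and using $\nabla_\mrI\cC=(\Psi_1,\Psi_2,\Psi_3)$ yields $0=\tfrac{\rmd}{\rmd t}\cC=\langle(\Psi_1,\Psi_2,\Psi_3),\cM\mbbV\rangle_{L^2(\mbbS)}$. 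As $\mbbV$ is arbitrary this is exactly $\cM^\ast\Psi_j=0$; in particular each $\Psi_j\in\ker\cM^\ast$ and $\mathrm{Range}\,\cM\subseteq\{\Psi_1,\Psi_2,\Psi_3\}^\perp$. (Alternatively one may verify $\cM^\ast\Psi_j=0$ by direct integration by parts.)

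Next I would check that $\Psi_1,\Psi_2,\Psi_3$ are linearly independent. Suppose $\sum_j c_j\Psi_j=0$. The first components give $c_1[\mrD^\dag\tau']_1+c_2[\mrD^\dag\tau']_2+c_3=0$; substituting this relation into the second components cancels the $\mrD^\dag\tau'$ and $c_3$ contributions and leaves $\tfrac1g(c_1[\vtau]_1+c_2[\vtau]_2)=\tfrac1g(c_1\cos\theta+c_2\sin\theta)=0$ on $\mbbS$. Because $\jmp{\theta}=2\pi$ the angle $\theta$ winds fully, so $\cos\theta$ and $\sin\theta$ are linearly independent functions on $\mbbS$, forcing $c_1=c_2=0$ and then $c_3=0$. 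Hence $\mathrm{span}\{\Psi_1,\Psi_2,\Psi_3\}$ is three-dimensional and contained in $\ker\cM^\ast$.

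To close the reverse inclusion it remains to show $\dim\ker\cM^\ast=3$ and that $\cM$ has closed range, for then $\mathrm{Range}\,\cM=(\ker\cM^\ast)^\perp=\{\Psi_1,\Psi_2,\Psi_3\}^\perp$. I would obtain these from the fact that $\cM$ in \eqref{e:Meq} is an Agmon--Douglis--Nirenberg elliptic system of total order three on the circle: with weights $t_1=2$, $t_2=1$ on $(\mbbV^n,\mbbV^\tau)$ and $s_1=s_2=0$ on the two equations, its principal symbol is the nonsingular diagonal matrix $\mathrm{diag}(\xi^2/g^2,i\xi)$, consistent with $\cM$ being third order. Concretely, solving \eqref{e:Meq} for $\partial_s^2\mbbV^n$ and $\partial_s\mbbV^\tau$ recasts $\cM\mbbV=U_1$ as a first-order periodic ODE system for $(\mbbV^n,\partial_s\mbbV^n,\mbbV^\tau)$, whose periodic boundary-value problem has Fredholm index zero because its kernel and cokernel dimensions equal $\dim\ker(I-\Phi(|\mbbS|))$ and $\dim\ker(I-\Phi(|\mbbS|)^t)$ for the monodromy $\Phi$, and these coincide. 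Thus $\dim\ker\cM^\ast=\dim\ker\cM=3$ by Lemma \ref{lem:kerM}, and combined with the two previous steps $\ker\cM^\ast=\mathrm{span}\{\Psi_1,\Psi_2,\Psi_3\}$, which finishes the proof. The main obstacle is exactly this last step --- establishing closed range and index zero for the mixed-order, non-self-adjoint operator $\cM$ on $\mbbS$ (equivalently, that the three solvability conditions for $\cM\mbbV=U_1$ coincide with orthogonality against the $\Psi_j$); the two preliminary inclusions and the independence of the $\Psi_j$ are comparatively routine.
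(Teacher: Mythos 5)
Your proposal is correct and follows the same skeleton as the paper's (very terse) proof: both equalities are reduced to the identification $\ker(\cM^\dag)=\Span\{\Psi_1,\Psi_2,\Psi_3\}$ followed by the Fredholm alternative. Where you genuinely diverge is in how that identification is established. The paper simply asserts that ``one can verify by direct calculation'' that the $\Psi_j$ span $\ker(\cM^\dag)$; you instead derive $\cM^\dag\Psi_j=0$ softly from the jump-invariance lemma (differentiating $\cC(U(t))\equiv 0$ along an arbitrary flow $U_t=\cM\mbbV$), check linear independence of the $\Psi_j$ using the winding of $\theta$, and then close the dimension count by reducing $\cM\mbbV=U_1$ to a periodic first-order ODE system whose index is zero, so that $\dim\ker\cM^\dag=\dim\ker\cM=3$ by Lemma~\ref{lem:kerM}. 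This buys you a proof that avoids the integration-by-parts computation entirely and, more importantly, supplies the closed-range/index-zero justification that the paper's invocation of the Fredholm alternative quietly presupposes; the cost is reliance on Lemma~\ref{lem:kerM} for the kernel dimension and a slight gloss in translating the cokernel of the first-order reformulation back to $\ker\cM^\dag$ in the original variables. Both of those are acceptable at the level of rigor the paper operates at, so I consider the argument complete.
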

\begin{proof}
The first statement follows from  \eqref{e:FC1} and \eqref{e:SC1}. For the second statement, one can verify by direct calculation that
$\ker(\cM^\dag)=\Span\{\Psi_1,\Psi_2,\Psi_3\}.$ From the Fredholm alternative the range of $\cM$ is the perpendicular complement of this set.
%the containment of the right-hand side in the tangent space is established in Lemma 1. The reverse containment follows from a simple tracing argument. By definition, a pair $(\tilde\kappa,\tg)\in\cA'$ if there is a smooth deformation $(\kappa(t),g(t))\in\cA$ that traces out a smooth interface $\gamma(t)$ and satisfies $(\kappa_t,g_t)\bigl|_{t=0}=(\tilde\kappa,\tg).$  The corresponding Cartesian vector field is given by $(\mbbV^n,\mbbV^\tau)=(\gamma'\cdot n, \gamma'\cdot\vtau)\bigl|_{t=0}.$ It follows that $(\tilde\kappa,\tg)=(\kappa_t,g_t)\bigl|_{t=0}$ and $(\mbbV^n,\mbbV^\tau)$ satisfy \eqref{e:In-Ext_VF}.  \tcb{Need to address regularity.}
\end{proof}

\subsection{Variations of the Energy}
Expanding the energy $\cE$ along the path $\phi$ requires expansions of $\rmd \sigma$, $\nabla_s$ and $\Delta_s.$
The first two are given in \eqref{e:dsig_exp} and \eqref{e:nab_exp2} respectively. For the third we apply \eqref{e:nab_exp2} twice 
$$
\Delta_s^\delta = \left(1-\del \frac{g_1}{g} +\frac{\del^2}{2}\Big(\frac{2g_1^2}{g^2}- \frac{g_2}{g}\Big)\right)
\nabla_s \left(\left(1-\del \frac{g_1}{g} +\frac{\del^2}{2}\Big(\frac{2g_1^2}{g^2}- \frac{g_2}{g}\Big)\right)\nabla_s\smbull\right)+O(\delta^3),
$$
and introduce the operators
$$ \begin{aligned}
 \Delta_{s,1}(\tg)&:=  -  \frac{\tg}{g} \Delta_s\smbull- \nabla_s\left(\frac{\tg}{g} \nabla_s\smbull\right),\\
 \Delta_{s,2}(\tg)&:= \frac{2\tg^2}{g^2}\Delta_s + \nabla_s \left(\frac{2\tg^2}{g^2}\nabla_s\smbull\right)
+ \frac{\tg}{g}\nabla_s\left(\frac{\tg}{g}\nabla_s\smbull\right),
\end{aligned}$$
for which the surface diffusion admits the expansion
\beq\label{e:Del_exp2}
\Delta_s^\delta = \Delta_s +\delta \Delta_{s,1}(g_1) +\frac{\delta^2}{2}\left(\Delta_{s,2}(g_1)+\Delta_{s,1}(g_2)\right) +O(\delta^3).
\eeq
With this notation the surface diffusion of curvature admits the expansion 
$$
    \Delta_s^\delta\kappa(\delta) 
    %= \left(\Delta_s^* +\delta \Delta_{s,1}^*(g_1) +\frac{\delta^2}{2}\left(\Delta_{s,2}^*(g_1)+\Delta_{s,1}^*(g_2)\right)\right) \left(\kappa+\delta\kappa_1+\frac{\delta^2}{2}\kappa_2\right)+O(\delta^3), \\
    = \Delta_s\kappa +\delta\left( \Delta_s \kappa_1 + \Delta_{s,1}(g_1)\kappa)\right)+ \frac{\delta^2}{2}\left(\Delta_s\kappa_2 +2\Delta_{s,1}(g_1)\kappa_1+\Delta_{s,2}(g_1)\kappa+\Delta_{s,1}(g_2)\kappa\right)+O(\delta^3).
$$
To illuminate the structure within this expansion we introduce the surface differential operators defined at $U$ with differential $\tU,$
$$\begin{aligned}
H_{21}(\tU)&:= \Delta_s\tilde\kappa+\Delta_{s,1}(\tg)\kappa, &
H_{22}(\tU)&:= 2\Delta_{s,1}(\tg)\tilde\kappa+\Delta_{s,2}(\tg)\kappa,\\
H_{11}(\tU)&:= \nabla_s\tilde\kappa+\nabla_{s,1}(\tg)\kappa, &
H_{12}(\tU)&:=2\nabla_{s,1}(\tg)\tilde\kappa+\nabla_{s,2}(\tg)\kappa,\\
H_{01}(\tU)&:= \tilde\kappa, &
H_{02}(\tU)&:=0.
\end{aligned}$$
This notation allows a compact expansion of curvature gradients
\beq\label{e:SD_exp}
(\nabla_s^\delta)^{(k)}\kappa(\delta) = (\nabla_s)^{(k)}\kappa +\delta H_{k1}(U_1)+\frac{\delta^2}{2}\left(H_{k2}(U_1)+H_{k1}(U_2)\right) +O(\delta^3),
\eeq
for $k=0, 1, 2.$
%\nabla_s\kappa&= \nabla_s\kappa +\delta H_{11}(U_1)+\frac{\delta^2}{2}\left(H_{12}(U_1)+H_{11}(U_2)\right) +O(\delta^3), \\
%\kappa &= \kappa+\delta H_{01}(U_1)+\frac{\delta^2}{2}
%\left(H_{02}(U_1)+H_{01}(U_2)\right) +O(\delta^3).
%\end{aligned}
%\eeq
Applying the surface differential expansions \eqref{e:SD_exp} and the surface measure expansion \eqref{e:dsig_exp2}, to the $\phi$-path energy Taylor expansion \eqref{e:E-Taylor}, 
the first variation follows from the chain rule,
\beq\label{e:first-var_0}
\begin{aligned}
\partial_\del\cE(\phi)\bigl|_{\del=0}&=\int_\mbbS  \nabla_\mrI\cE\cdot U_1 \rmd \sigma
=\int_\mbbS \left(F_2 H_{21}+F_1H_{11}+F_0H_{01}\right)\,\rmd \sigma+\int_{\mbbS}F \rmd\sigma_{1}(g_1).
\end{aligned}\eeq
Here, for $k=0,1,2$, $F_k$ denotes the partial derivative of $F$ with respect to its dependence upon the k'th-order surface gradient  $(\nabla_s)^{(k)}\kappa$.  For simplicity we have used the compressed notation $H_{i1}=H_{i1}(U_1)$ and $F_{i}=F_{i}(U)$ for each $i=0,1,2.$
Grouping factors of $U_1$ and integrating by parts yields an explicit representation for the first intrinsic variation 
\beq\label{e:first_var}
\nabla_{\mrI}\cE(U)=\bpm \partial_\kappa \cE\cr\partial_g \cE\epm\!\!\Bigl|_{U}=
\begin{pmatrix}
   \Delta_s F_2-\nabla_sF_1+F_0 \cr
    \frac{1}{g}\left(
    -F_2\Delta_s\kappa+\nabla_s F_2\nabla_s\kappa - F_1 \nabla_s\kappa +F\right)
\end{pmatrix}.
\eeq
At a critical point $U\in\cA$ the intrinsic variation is orthogonal to all $U_1\in\cA'(U).$ To establish that a critical point is a local minimum requires non-negativity of  the second variation,
$$\partial_\del^2\cE(\phi)\bigl|_{\delta=0}=\int_\mbbS\left( U_1^t\left[\nabla_\mrI^2\cE\right] U_1 +\nabla_\mrI \cE \cdot U_2\right)\,\rmd \sigma.$$
The representation for the energy contribution from $U_2$ follows  directly from the first variation \eqref{e:first_var}, and its contribution will be replaced by the Lagrange multiplier form of the second constraint variation. The second energy variation and its quadratic dependence on $U_1$ is a novel quantity. It admits the expansion
\beq\label{e:2var-1}
\left\langle \left[\nabla_\mrI^2\cE\right] U_1,U_1\right\rangle_{L^2(\mbbS)}= \sum_{i,j=0}^2 \int_\mbbS F_{ij}H_{i1}H_{j1} \rmd \sigma +
\sum_{i=0}^2\int_\mbbS 2F_iH_{i1}\rmd \sigma_1(g_1)+
\sum_{i=0}^2\int_\mbbS F_{i}H_{i2}\rmd \sigma,
\eeq
where for $i,j=0,1,2$ we denote $H_{ij}=H_{ij}(U_1)$ and $F_{ij}=F_{ij}(U).$ Using \eqref{e:dsig1} to replace $\rmd\sigma_1(g_1)$ and the formula \eqref{e:first-var_0} for the first variation yields
\beq\label{e:2var-2}
\left\langle \left[\nabla_\mrI^2\cE\right] U_1,U_1\right\rangle_{L^2(\mbbS)}=  \int_\mbbS\left( \sum_{i,j=0}^2 F_{ij}H_{i1}H_{j1} +
\sum_{i=0}^2 F_{i}H_{i2}
+2\left(\nabla_\mrI\cE\cdot U_1-F\frac{g_1}{g}\right)\frac{g_1}{g}\right)\,\rmd \sigma.
\eeq
Integration by parts in the $H_{ij}$ operators allows the right-hand side to be rewritten in the bilinear form presented in the left-hand side, yielding an explicit representation for the self-adjoint operator $\nabla_\mrI^2\cE.$ At this level of generality the exact expression is cumbersome. An explicit representation is pursued for a first-order energy in Section\,\ref{s:FOE}. 
At a critical point $U\in\cA$ the $U_2$ variation satisfies \eqref{e:Abs-CP} and can be replaced with the bilinear action of $U_1$ on the Hessian of the constraint, yielding the formal Lagrange multiplier expansion associated to \eqref{e:LME}
$$ \cE(\phi)=\cE(U) +\frac{\delta^2}{2}\left\langle \left(\nabla_\mrI^2\cE(U)-\Lambda\nabla_\mrI^2\cC(U)\right)U_1,U_1\right\rangle_{L^2(\mbbS)} +O(\delta^3),$$
that incorporates the impact of the curve closure constraints.

%2F_{21}\left(\Delta_s^*\kappa_1+\Delta_{s,1}^*(g_1)\kappa\right)\left(\nabla_s\kappa_1+\nabla^*_{s,1}(g_1)\kappa\right)+\\
%&\hspace{0.25in} 2F_{20}\left(\Delta_s^*\kappa_1+\Delta_{s,1}^*(g_1)\kappa\right)\kappa_1 +F_{11}(\nabla_s\kappa_1+\nabla^*_{s,1}(g_1)\kappa)^2+2F_{10}(\nabla_s\kappa_1+\nabla^*_{s,1}(g_1)\kappa)\kappa_1+\\
%&\hspace{0.25in} F_{00}\kappa_1^2\Bigr)\,\rmd\sigma +
% \int_\mbbS\Bigl(F_2 (\Delta_s^*\kappa_1+\Delta_{s,1}^*(g_1)\kappa)+F_1(\nabla_s\kappa_1+\nabla^*_{s,1}(g_1)\kappa)+F_0\kappa_1\Bigr) \,\rmd\sigma_1^*(g_1)+\\
% &\hspace{0.25in} \int_\mbbS F\,\rmd\sigma_2(g_1).
%\end{aligned}$$

\subsection{$L^2(\mbbS)$ Gradient Flows}
An extrinsic velocity $\mbbV=(\mbbV^n,\mbbV^\tau)^t$ induces an evolution in $U$ through the relation \eqref{e:In-Ext_VF}. The chain rule implies that the time derivative of the energy evaluated on the evolving $U$ satisfies the  relation
\beq\label{e:ED1} \frac{d}{dt}\cE(U(t)) =
\int_\mbbS 
\nabla_\mrI \cE \cdot \cM 
\mbbV\, \rmd \sigma=
\int_\mbbS 
\cM^\dag\nabla_\mrI \cE \cdot
\mbbV\, \rmd \sigma =\int_\mbbS 
\nabla_\mrE \cE \cdot \mbbV\, \rmd \sigma,
\eeq
where the $L^2(\mbbS)$ transpose of $\cM$ is
    $$ \cM^\dag = \begin{pmatrix} G &  g\kappa \cr \nabla_s\kappa & -\nabla_s( g\cdot) \end{pmatrix}, $$ 
and we have introduced the $L^2(\mbbS)$ extrinsic variation of $\cE$,
\beq\label{e:CVE}
\nabla_{\mrE}\cE := \cM^\dag \nabla_\mrI\cE.
\eeq
We verify in Lemma\,\ref{l:Wind} (see Section\,\ref{s:reparam} of Appendix) that the second entry of the extrinsic variation, $[\nabla_\mrE \cE]_2$ is zero, and the tangential velocity $\mbbV^\tau$ drops out of the energy dissipation mechanism. More specifically, denoting the $k$th row of $\cM^\dag$ by $\cM^\dag_k,$ we have
\beq\label{e:W-invar}
\cM_2^\dag \cdot \nabla_\mrI \cE= \partial_\kappa \cE \nabla_s \kappa - \nabla_s (g \partial_g \cE)=0.
\eeq
If $U$ is a critical point of $\cE,$  then $\partial_\kappa\cE=0$, and hence $g\nabla_g\cE$ is independent of $s$. This has the interpretation that the arc-length variation, $\partial_g\cE$, is an arc length-weighted first integral of the functional variation of $\cE$ and is the flip-side of the fact that tangential velocity is equivalent to reparameterization and hence does not impact the energy, see  Lemma\,\ref{l:DV} in Section\,\ref{s:reparam} of Appendix.

\begin{remark}
 The arc length variation $\cE_g$ can be viewed as a weighted version of the Hamiltonian that is constant along orbits corresponding to critical points of the functional variation.  This structure was exploited by Alikakos and co-authors  in their fixed-domain construction of heteroclinic orbits that are critical points of a multi-component Allen-Cahn energies with several global minima,  \cite{ABC_06}, \cite{AF_08}. Indeed they relate critical points of the Allen-Cahn energy to critical points of a Jacobi functional
 $$ \int_\mbbS \sqrt{W(\gamma(s))}\,\rmd \sigma,$$
through a geometric version of the least action principle \cite{GH_96}.
 \end{remark}
 
These calculations motivate the following definition of the $L^2(\mbbS)$ gradient normal velocity of $\cE,$
\beq \label{e:L2grad}
\mbbV^n_{\!\cE}:= -\left[\nabla_\mrE \cE\right]_1=-\cM^\dag_1 \cdot  \nabla_\mrI \cE= - G\partial_\kappa\cE-g\kappa\partial_g\cE.
\eeq

For an arbitrary, smooth tangential velocity $\mbbV^\tau$ the gradient flow takes the form
\beq\label{e:LI-L2GF}
%\boxed{
\begin{pmatrix} \kappa_t\cr g_t\end{pmatrix}=\cM \begin{pmatrix} \mbbV^n_{\!\cE} \cr \mbbV^\tau\end{pmatrix}=
-\begin{pmatrix} \mrG^2 \partial_k\cE+\mrG(g\kappa\partial_g\cE)\cr 
g\kappa \mrG\partial_\kappa\cE +(g\kappa)^2\partial_g\cE
\end{pmatrix} +
\begin{pmatrix}  \mbbV^\tau\nabla_s\kappa \cr g\nabla_s\mbbV^\tau \end{pmatrix},
%}
\eeq
which affords the $\mbbV^\tau$-independent dissipation mechanism
$$ \begin{aligned}
\frac{d}{dt}\cE& = \int_\mbbS \cM^\dag \begin{pmatrix}  \partial_\kappa \cE\cr
\partial_g \cE
\end{pmatrix}  \cdot 
 \begin{pmatrix}  \mbbV^n_{\!\cE} \cr\mbbV^\tau \end{pmatrix}\,
\rmd \sigma
=\int_\mbbS \begin{pmatrix} -\mbbV^n_{\!\cE} \cr 0 \end{pmatrix}\cdot \begin{pmatrix} \mbbV^n_{\!\cE} \cr \mbbV^\tau\end{pmatrix}\,\rmd\sigma =
-\int_\mbbS|\mbbV^n_{\!\cE}|^2 \rmd \sigma.
\end{aligned}
$$
A complete description of the gradient flow in the intrinsic coordinates requires a choice of tangential velocity. Although a tangential velocity is equivalent to a reparameterization and does not impact the system energy, it cannot be ignored within the intrinsic equation system.  A common choice is the so-called ``co-moving'' frame, for which $\mbbV^\tau\equiv 0$.  A second choice is scaled arc length, for which $g$ is constant over $\mbbS$ but may  not be constant in time. We remark that a constant in time and space arc-length parameterization (called fixed arc length) requires that the size of the reference domain $\mbbS$ change in time. This has the consequence that the curvature and tangential velocities are no longer $\mbbS$-periodic, and greatly complications the formulation. Fixed arc-length is not considered here. For scaled arc-length the tangential motion satisfies
\beq\label{e:SArc}\nabla_s \mbbV^\tau=-\kappa\mbbV^n +\frac{\int_\mbbS \mbbV^n\kappa\,\rmd\sigma}{\int_\mbbS \rmd \sigma} = -\kappa \mbbV^N +\partial_t\ln|\Gamma|.
\eeq
More specifically since $g=g(t)$ is a spatial constant for scaled arc-length this implies that
$$ \mbbV^\tau(s)-\mbbV^\tau(0)=-\mrD(\mbbV^n\kappa) + a(s)\int_\mbbS \mbbV^n\kappa\,\rmd\sigma, $$
where $\mrD$ is given in \eqref{e:D_def}  and $a(s)$ is the percentage of total curve length contained in the image of the curve restricted to $[0,s].$
From an analytical point of view the scaled arc-length tangential velocity reduces the arc length evolution to a scalar ODE, but considerably complicates the curvature evolution through the inclusion of $\mbbV^\tau=\mbbV^\tau(\mbbV^n_\cE)$ in the right-hand side of \eqref{e:LI-L2GF} as a convective term. For the derivation of the linearized system we consider the co-moving gauge.

\subsection{First Order Energies}
\label{s:FOE}
We simplify the formulas for the intrinsic Hessian $\nabla_\mrI^2\cE$  for a class of energies
that involve surface gradients up to first order,
\beq\label{e:FOE}
%\cE(\kappa,g)=\int_\mbbS \frac 12 f_1(\kappa)|\nabla_s\kappa|^2+f_2(\kappa)\,\rmd\sigma.
\cE(U)=\int_\mbbS F(\nabla_s\kappa,\kappa)\,\rmd \sigma.
\eeq
%This family fits into the framework \eqref{e:LIE} with 
%$$ F(p,\kappa)=f_1(\kappa)\frac{p^2}{2} +f_2(\kappa),$$
%where $p$ denotes $\nabla_s\kappa.$
The representation \eqref{e:first_var} of the $L^2(\mbbS)$ intrinsic gradient reduces to
\beq\label{e:FoE_IG}
\nabla_{\mrI}\cE=  \bpm -\nabla_s F_1 +F_0\cr
                        \frac{1}{g}\left(-F_1\nabla_s\kappa +F \right)\epm.
\eeq
 Similarly, restricting the second variation of $\cE$ given in \eqref{e:2var-1} to the first-order energy yields,
 \beq
 \begin{aligned}
 \left\langle [\nabla_\mrI^2\cE] U_1,U_1\right\rangle&=\int_\mbbS \Bigl(F_{11}\left(\nabla_s\kappa_1-\frac{g_1}{g}\nabla_s\kappa\right)^2 +2F_{10}
 \left( \nabla_s\kappa_1-\frac{g_1}{g}\nabla_s\kappa\right)\kappa_1+F_{00}\kappa_1^2 +\\
 &\hspace{0.15in} \left(F_1\left(\nabla_s\kappa_1-\frac{g_1}{g}\nabla_s\kappa\right) +F_0\kappa_1 \right)\frac{2g_1}{g} +F_1\left(-\frac{2g_1}{g}\nabla_s\kappa_1 +\frac{2g_1^2}{g^2}\nabla_s\kappa\right)
 \Bigr)\,\rmd\sigma,\\
 &=\int_\mbbS \Bigl(F_{11}\left(\nabla_s\kappa_1-\frac{g_1}{g}\nabla_s\kappa\right)^2 \!\!\!+2F_{10}
 \left( \nabla_s\kappa_1-\frac{g_1}{g}\nabla_s\kappa\right)\kappa_1+F_{00}\kappa_1^2 
% &\hspace{0.2in} 
 +F_0\kappa_1 \frac{2g_1}{g} \Bigr)\,\rmd\sigma.
 \end{aligned}
 \eeq
 Factoring out the terms with $U_1$ and integrating by parts we deduce that
 \renewcommand*{\arraystretch}{1.8}
 \beq
 \label{e:cF-Hessian}
 \nabla_\mrI^2\cE = 
 \bpm 
 -\nabla_s(F_{11}\nabla_s\smbull) -\nabla_s(F_{10}\smbull) + F_{10}\nabla_s\smbull +F_{00} & \nabla_s\left(\frac{F_{11}\nabla_s\kappa}{g}\smbull\right) -\frac{F_{10}\nabla_s\kappa}{g}+\frac{F_0}{g} \\
 \frac{1}{g}\left(-F_{11}\nabla_s\kappa\nabla_s\smbull -F_{10}\nabla_s\kappa +F_0\right)  & F_{11}\left(\frac{\nabla_s\kappa}{g}\right)^2 
 \epm.
 \eeq
 The $(1,1)$ entry of $\nabla_\mrI^2\cE$ is the linearization of $\partial_\kappa\cE$ with respect to $\kappa$, which we denote by $\mrL_0.$ Similarly, denoting by $\mrK_0$ the linearization of $g\partial_g\cE$ with respect to $\kappa$, we have the $L^2(\mbbS)$ self-adjoint form
 \beq\label{e:FOE-Hessian_2}
 \nabla_\mrI^2\cE = \bpm\mrL_0 & \mrK_0^\dag \left(\frac{\smbull}{g}\right) \\
\frac{1}{g}\mrK_0   &  \frac{F_{11}|\nabla_s\kappa|^2}{g^2}\epm. 
 \eeq
 \renewcommand*{\arraystretch}{1.1}
Together with the expressions \eqref{e:cC-Hessian} and \eqref{e:cC-Hessian3} for $\nabla_{\mrI}^2\cC$, the result \eqref{e:cF-Hessian} yields an explicit formulation for the constrained second variation $\cL_\Lambda$ defined in \eqref{e:Cons_2ndVar}.

The normal velocity associated to the first-order energy $\cE$ of the form \eqref{e:FOE} takes the form
\beq\label{e:LIE_NV}
%\begin{aligned}
\mbbV^n_{\!\cE} =-\cM^\dag_1\cdot 
\nabla_{\mrI}\cE =
-\mrG( -\nabla_s F_1+F_0) -\kappa(-F_1\nabla_s\kappa +F).
%\\
%&= -\mrG\left( -\nabla_s(f_1\nabla_s\kappa)+\frac12 f_1'|\nabla_s\kappa|^2+f_2'\right)-\kappa\left(f_2-\frac{f_1}{2}|\nabla_s\kappa|^2\right).
%&=-\mrG\left(f_2'-f_1\Delta_s\kappa-\frac{f_1'}{2}|\nabla_s\kappa|^2\right)-\kappa\left(f_2-\frac{f_1}{2}|\nabla_s\kappa|^2\right).
%\end{aligned}
\eeq
As an example, the first order energy \eqref{e:FOE} includes the classical Canham-Helfrich energy as a special case. The Canham-Helfrich energy has no surface diffusion term and a quadratic curvature dependence with even parity, reducing the energy density to $F(\kappa)=\frac12\kappa^2+\beta$. 
The associated Canham-Helfrich normal velocity recovers the classical Willmore flow with a $\beta$ dependent mean curvature
\beq
\label{e:NV-CH}
\mbbV^n_{\rm CH}=\left(\Delta_s+\frac{\kappa^2}{2}-\beta\right)\kappa.
\eeq

 The gradient flow associated to \eqref{e:FOE} can be written in the form \eqref{e:LI-L2GF}, however for an equilibrium analysis it is illustrative to separate out both prefactors of $\cM$, writing the flow as
\beq\label{e:FGF}
\begin{pmatrix}\kappa_t\cr g_t
\end{pmatrix} = -\cM\cM^\dag \nabla_\mrI\cE 
=-\cM\cM^\dag
 \bpm -\nabla_s F_1 +F_0\cr
                        \frac{1}{g}\left(-F_1\nabla_s\kappa +F \right)\epm,
\eeq
subject to periodic boundary conditions on $\mbbS.$
At a general point $U\in\cA$ the linearization of the right-hand side  yields a complicated operator. However at an equilibrium $U=(\kappa,g)^t$ the relations $\nabla_\mrI\cE(U)=0$ allow the linearization to be significantly simplified. The first simplification arises from the fact that the variation of $\cM$ and $\cM^\dag$ drop out since they act on the zero term $\nabla_\mrI\cE.$ The second simplification arises from using the equilibrium equations to rewrite the $[\nabla_{\mrI}^2\cE]_{21}$ entry, which allows the $[\nabla_\mrI^2\cE]_{12}$ and $[\nabla_\cI^2\cE]_{21}$ terms to be identified as adjoints of each other. The result is the anticipated linearization 
\beq \label{e:FOE-Lin}
\cL:= -\cM\cM^\dag \nabla_\mrI^2\cE,
\eeq
with $\nabla_\mrI^2\cE$ taking the equilibrium form \eqref{e:FOE-Hessian_2}. For a general $U\in\cA$ the self-adjoint geometric prefactor takes the form
$$\cM\cM^\dag(U) =\bpm \mrG^2 +|\nabla_s\kappa|^2 & \mrG(g\kappa\smbull) -\nabla_s\kappa \nabla_s(g\smbull)\\
g\kappa\mrG\smbull +g\nabla_s(\nabla_s\smbull\kappa)&g^2\kappa^2-g\nabla_s(\smbull\nabla_sg)\epm.$$
Since $\cM\cM^\dag\geq0$, with a maximal three dimensional kernel, it has a non-negative square root $\cS=\cS(U)$, with a three dimensional kernel corresponding to rigid body motions of the underlying interface. At an equilibrium the operator $\cL$ is a product of a non-negative, self-adjoint operator and a self-adjoint operator. This is evocative of the structure exploited in the spectral analysis of the quasi-steady states of the  Cahn-Hilliard equation, \cite{XChen_94}. While a comprehensive analysis of the spectrum of $\cL$ is an open problem, the kernel of $\cL$ is amenable to symmetry. For an equilibrium $U$ we act with the infinitesimal generator of translations, $\nabla_s$,  on the critical point equation $\nabla_\mrI\cE(U)=0$ to obtain
 $$ 0=\nabla_s\left(\bpm \mrI & 0\\ 0 & g\epm \nabla_\mrI\cE(U) \right) =\nabla_s\bpm\partial_\kappa\cE\\ g\partial_g\cE\epm=\bpm \mrL_0 \\ \mrK_0\epm \nabla_s\kappa,$$
 from which we infer the translational element of the kernel of the Hessian,
 \beq \label{e:Hessian_Kernel}
 \bpm \nabla_s\kappa\cr 0 \epm\in \ker\left(\nabla_\mrI^2\cE(U)\right).
 \eeq

 \begin{figure}
 \includegraphics[width=6in]{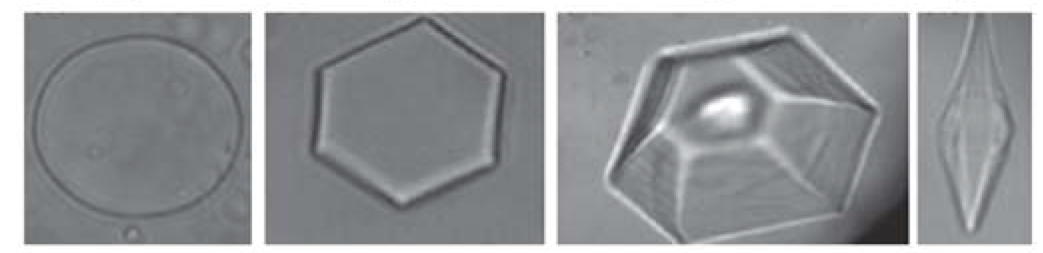}
 \caption{\small Faceting in  ice crystals. (left to right) The impact of increasing density of antifreeze glycoproteins on morphology, \cite{Gibson10}.
}
\label{f:Gibson}
 \end{figure} 

\section{Application: Faceting Energy}
To investigate quasi-adiabatic dynamics in a gradient flow, we consider a ``faceting energy'' within the first-order framework \eqref{e:FOE} with 
$$F(\nabla_s\kappa,\kappa)=\frac{\alpha^2}{2}|\nabla_s\kappa|^2 +W(\kappa),$$
 where $W$ is a  double well potential with minima at $\kappa=0$ and $\kappa=\kappa_*.$ This is an Allen-Cahn energy in the curvature, different than an Allen-Cahn energy for a scalar embedded in the interface. Since $W$ has regions of concavity the surface diffusion is required to make the system well-posed. The mass of $\kappa$ is conserved under \eqref{e:C2}, so a supporting line can be added to the well and there is no loss of generality in assuming that $W$ has equal depth wells. This is a simplistic model of interfaces formed by the freezing of water blended with antifreeze polymers that lower the freezing point. The polymers favor flat interfaces, or interfaces with large curvatures whose induced radius $1/\kappa$ is smaller than the radius of gyration of the polymer, see Figure\,\ref{f:Gibson}.  That is, high interface curvatures exclude the antifreeze polymers. A penalty term that constrains the perimeter of the interface is added as a simple proxy for enclosed volume. This accounts for the relation between freezing temperature and polymer density within the enclosed liquid, driving the enclosed liquid volume to a value predetermined by available polymer. The combined energy takes the form
\beq\label{e:Facet}
\cE_{\rm Fc}=  \frac{\beta}{2}(|\Gamma|-L_*)^2+\int_\mbbS \frac{\alpha^2}{2}|\nabla_s \kappa|^2 +W(\kappa)\,\rmd \sigma,
\eeq
where $|\Gamma|$ denotes the length of the curve $\Gamma$ and $L_*$ is a reference length. 
%The amount of salt or anti-freeze polymer is fixed, so a larger inclusion has lower salt/anti-freeze and will freeze, a smaller inclusion has high concentration of salt/anti-freeze and will melt the surrounding ice. 
%The curve length $L_*$ is a proxy for the enclosed volume and hence the antifreeze density.
The curve length can be written as
$$ |\Gamma|=\int_\mbbS \rmd \sigma,$$
so that the perimeter energy can be written as
$$\beta(|\Gamma|-L_*)^2= \beta\left(\int_\mbbS \rmd \sigma-L_*\right)^2.$$
Using \eqref{e:FoE_IG} it is straight-forward to incorporate the perimeter energy yielding
$$\nabla_\mrI \cE_{\rm Fc}= \bpm -\alpha^2\Delta_s\kappa +W'(\kappa) \cr \frac{1}{g}\left(-\frac{\alpha^2}{2}|\nabla_s\kappa|^2 +W(\kappa)+\beta(|\Gamma|-L_*)\right)\epm, $$
and the associated faceting gradient-flow normal velocity 
\beq \label{e:Facet-NV} \mbbV^n_{\rm Fc}= -\mrG \left(-\alpha^2 \Delta_s\kappa +\mrW '(\kappa)\right) -\kappa\left( -\frac{\alpha^2}{2}|\nabla_s\kappa|^2+ \mrW(\kappa) +\beta(|\Gamma|-L_*)\right).
\eeq
\subsection{Formal Equilibrium and Quasi-Steady Analysis}
We provide a rough sketch of a construction of families of equilibrium and quasi-steady faceted states of the Faceting energy, emphasizing how fixed-domain tools can be brought to bear upon these geometric problems.
The structure implied in the functional-arc length variation relation \eqref{e:W-invar}
impacts the critical point construction.   Phase-plane methods allow the construction of families of equilibrium from the solutions of the $\mbbS$-periodic system
\beq\label{e:Facet-CP} 
\alpha^2 \Delta_s\psi -W'(\psi)=\nu_1, \eeq
where $\nu_1\in\mbbR$ is a free parameter associated to the total curvature constraint. Periodic solutions can be constructed by reparameterizing to $\mbbS$-constant arc length scaled so that the solution is $|\mbbS|$-periodic. For small values of $\alpha>0$ and $\nu_2<0$ this yields periodic $N$-pulse solutions that take  values near $0$ and $\kappa_*,$ the two minima of $W$.    
Using $\nabla_s\psi$ as an integrating factor yields the relation
\beq\label{e:Fint}
\nabla_s\left(\frac{\alpha^2}{2}|\nabla_s\psi|^2-W(\psi)-\nu_1\psi \right)=0,
\eeq
and the associated first integral of the system
$$ \frac{\alpha^2}{2}|\nabla_s\psi|^2-W(\psi)=\nu_1\psi+\nu_2,$$
for some $\nu_2\in\mbbR.$ The arc length variation satisfies
$$ g\partial_g\cE_{\rm Fc}(\psi)= \nu_1\psi+\nu_2 +\beta(|\Gamma|-L_*).$$

Evaluated at these special solutions the $\cE_{\rm Fc}$-gradient normal velocity reduces to
$$ \mbbV^n_{\rm Fc}(\psi)=-G\nu_1-\psi\left(\nu_1\psi+\nu_2+\beta(|\Gamma|-L_*)\right)=-\nu_2-\beta(|\Gamma|-L_*),$$
which for a fixed value of $\nu_2$ can be tuned to zero by adjusting either the equilibrium length $L_*$ or the coefficient $\beta.$ To generate an admissible equilibrium it remains to satisfy the closure conditions \eqref{e:C1}-\eqref{e:C2}. For this we outline a process. For a fixed value of $\alpha$ the area condition \eqref{e:C2} is a proxy for the sum of the interior angles in a regular $N$-gon. This condition is easily met since the area integral increases as $\nu_2\to 0_+$, but decreases with $\alpha.$  Adjusting the number of pulses $N$ one can make the net angle within $1/N$ of $2\pi$. The value of $\nu_1$ can be adjusted to yield an exact angle condition. The first closure relation ensures the curve terminates where it started. Exploiting periodicity, breaking the domain $\mbbS$ into $N$ equal pieces, $\gamma$ traces out identical curves over each piece. Moreover, for $\alpha\ll1$ the image of each piece approximates a corner of angle $2\pi/N$. Such shapes can generically be assembled in a $\mbbC^1$ fashion into a closed curve.

A more flexible construction yields quasi-stationary solutions by splicing together heteroclinic connections of the critical point equation on the line. Without loss of generality arc length $g$ is taken to be $\mbbS$-constant. Let $\psi_h$ be the heteroclinic solution to \eqref{e:Facet-NV} with $\nu_1=0$ on $\mbbR$ that connects $0$ to $\kappa_*.$
To satisfy the closure constraints \eqref{e:C1}-\eqref{e:C2} we fix $N\in{\mathbb N}_+$ and distance $\ell>0$, modifying $\psi_h$ to $\tilde\psi_h$ given by
\beq\label{e:psi_h}
\tilde\psi_h(s) =\left\{ 
\begin{array}{lcr}
\psi_h(s) & |s|<\alpha \ell,\\
0 & s<-2\alpha\ell,\\
\kappa & s>2\alpha\ell,
\end{array}\right. 
\eeq
 and is smooth and monotone for $\alpha\ell\leq|s|\leq 2\alpha\ell$.  So defined, $\tilde\psi_h$ is exponentially close, $O(e^{-\nu/(\alpha\ell)})$, to $\psi_h$ as $\alpha\to 0$ for some $\nu>0.$ 
 We define the $2N$ front quasi-equilibrium 
$$\psi_N:=\sum_{i=1}^{2N} (-1)^{i+1}\tilde\psi_h(s-p_i),$$
where $0<p_1<\cdots <p_i < p_{i+1}<\cdots <p_{2N} <|\mbbS|$ is a partition of $\mbbS$ into successive up and down heteroclinic orbits.  Under the condition that $|p_i-p_{i+1}|>4\alpha\ell$ the localized fronts do not overlap and $\phi_N$ has $N$ regions where it equals $\kappa_*$ and $N$ where it equals zero.
The function $\phi_N$ can be translated so that $\phi_N(0)=\phi_N(|\mbbS|)=0$, verifying that it is $\mbbS$-periodic. 
%Assuming that the front $\phi_h-\kappa/2$ has odd parity about zero, 
The closure constraints impose three restrictions on the shift locations,
%
%$$\int_\mbbS \phi_N\, \rmd\sigma=\kappa\sum_{i=1}^N (-1)^{i+1}s_i=2\pi.$$
%The constraint \eqref{e:C2} imposes two additional conditions on the $2N$ parameters $\{s_i\}_{i=1}^{2N}$. 
accounting for translational invariance leaves $2N-4$ free parameters. Adjusting $L_*$ to balance the length eliminates the perimeter term and $\phi_N$ satisfies the first-integral equation up to exponentially small terms.  That is, $U_N:=(\psi_N,g)^t\in\cA$ exactly, while the functional and arc length variations satisfy
$$\begin{aligned}
\cR_{\kappa}&:=\partial_\kappa\cE(\psi_N)= -\alpha^2\Delta_s\psi_N+W'(\psi_N)\sim \delta, \\
\cR_g&:=g \partial_g\cE (\psi_N)= -\frac{\alpha^2}{2}|\nabla_s\psi_N|^2+W(\psi_N)+\beta(|\Gamma|-L_*)\sim \delta,
\end{aligned} $$
where $\delta=e^{-\nu/\alpha}$ characterizes the exponentially small residuals. 

\begin{figure}
 \begin{tabular}{ccccc}
\includegraphics[width=1.2in]{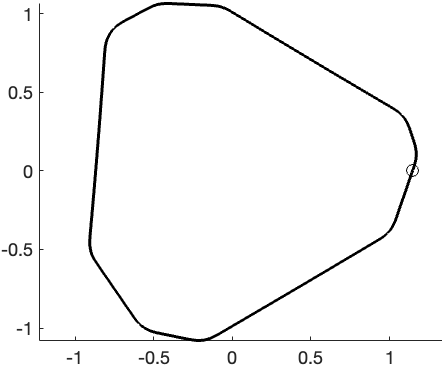}&
\includegraphics[width=1.2in]{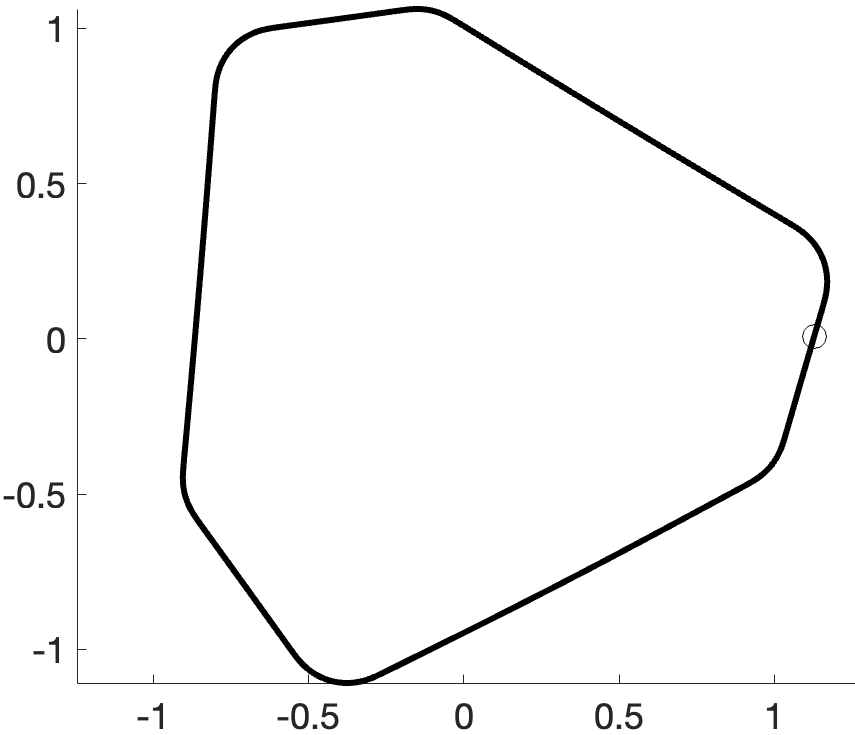}&
\includegraphics[width=1.2in]{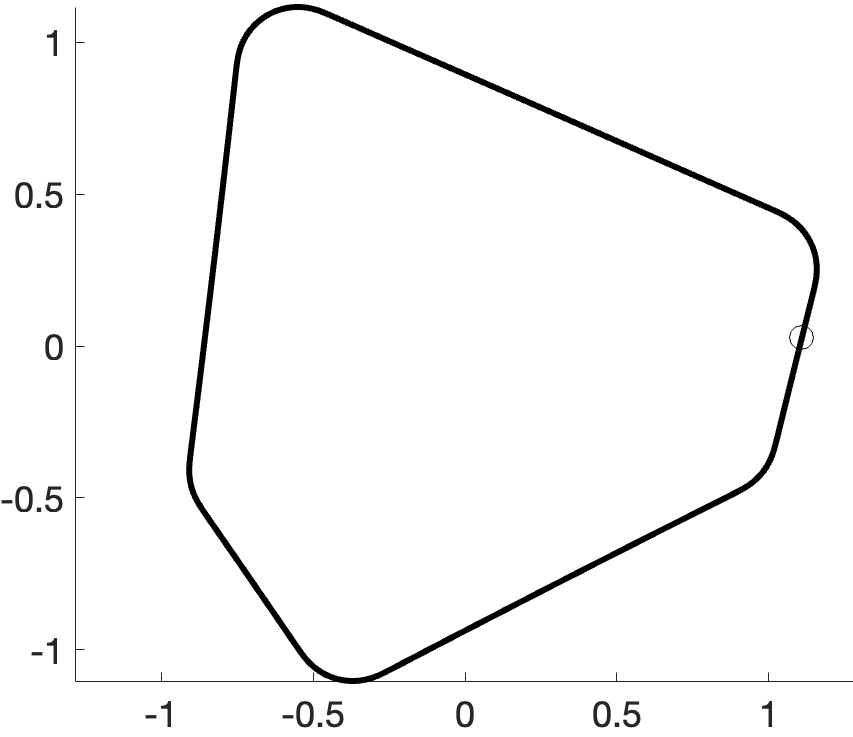}&
\includegraphics[width=1.2in]{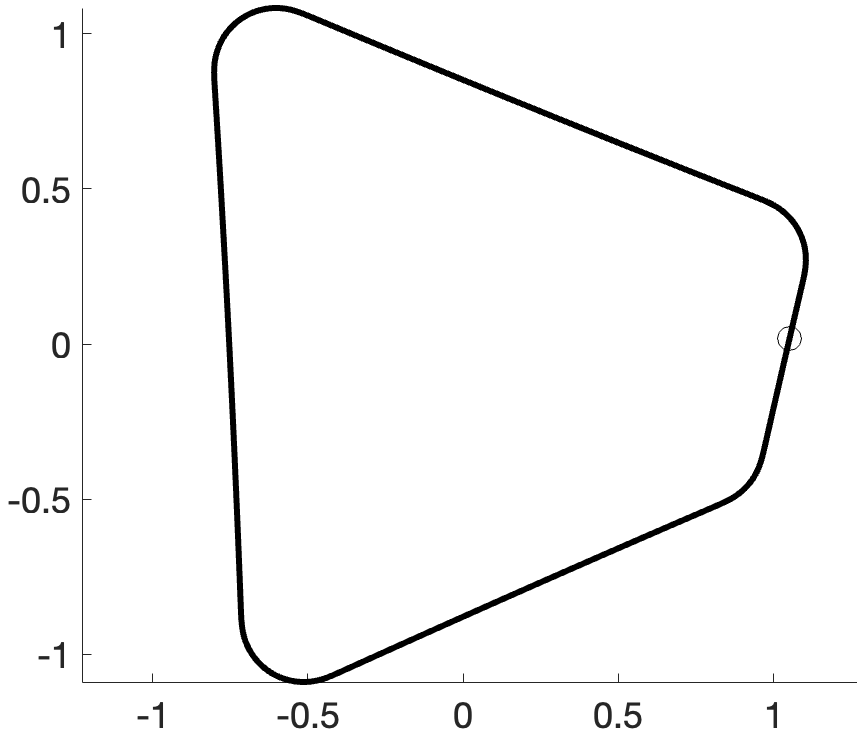} &
\includegraphics[width=1.2in]{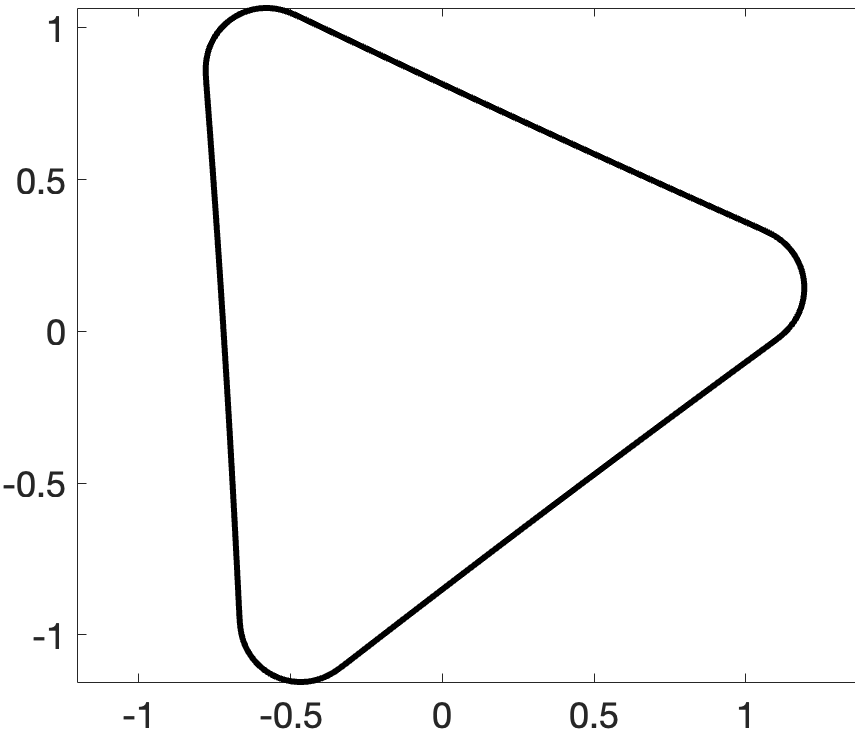}
\\
\includegraphics[width=1.2in]{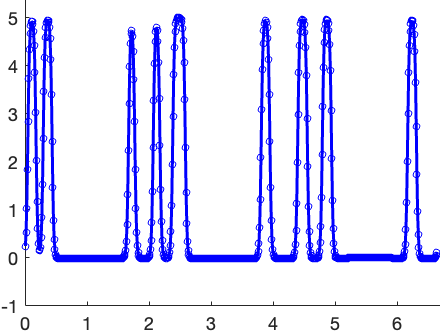}&
\includegraphics[width=1.2in]{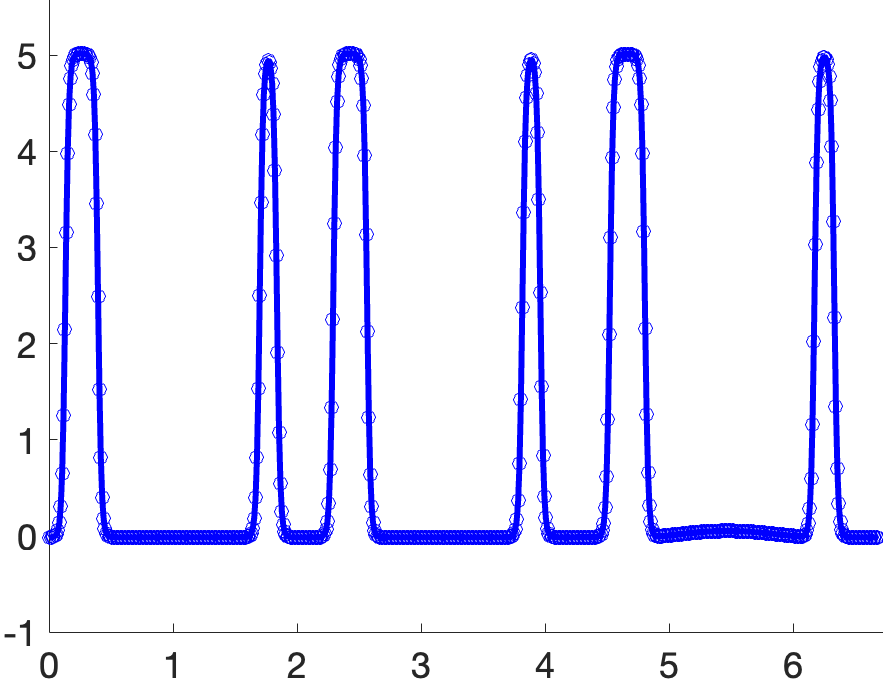}&
\includegraphics[width=1.2in]{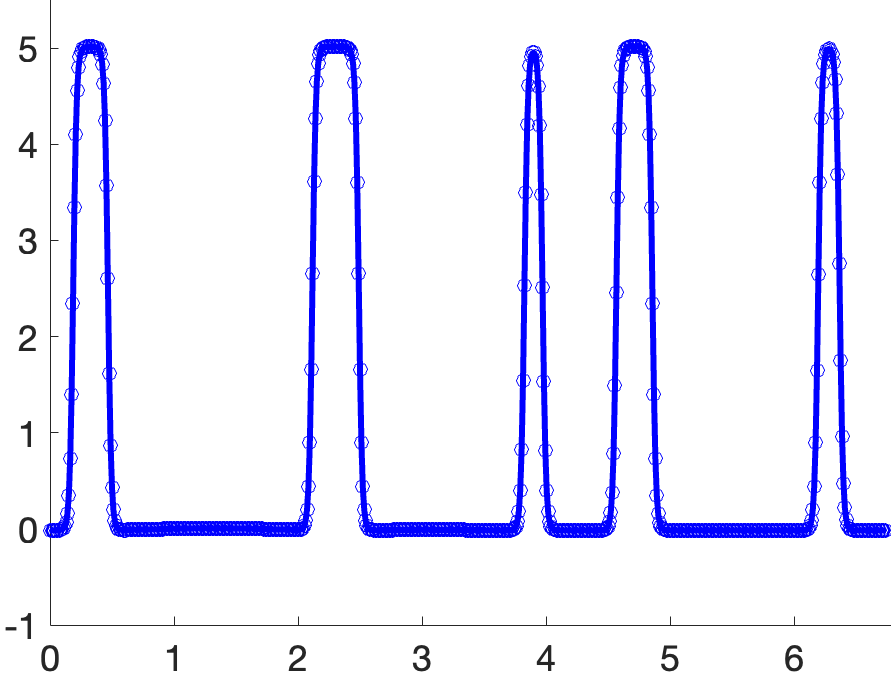}&
\includegraphics[width=1.2in]{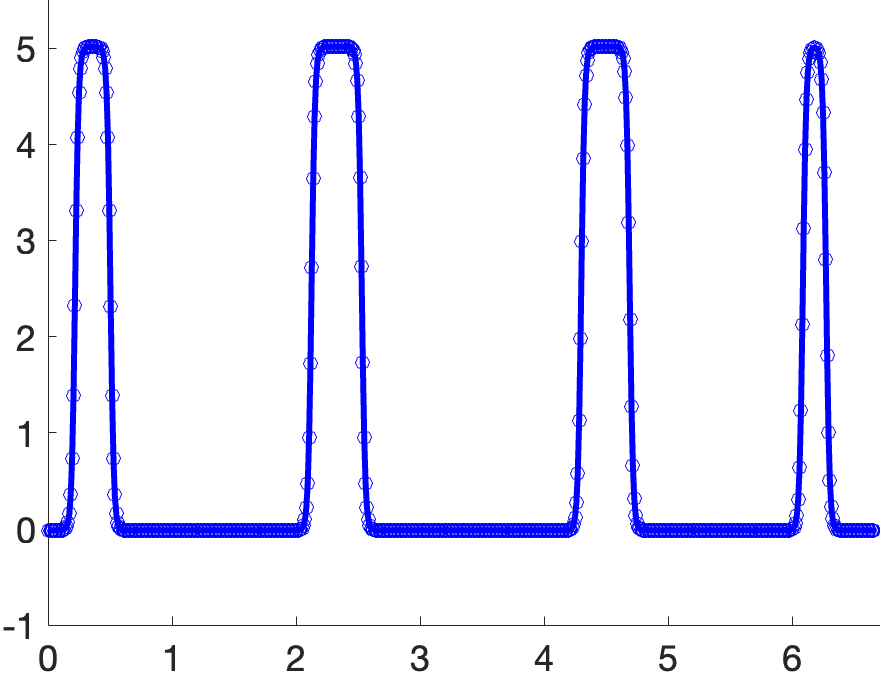} &
\includegraphics[width=1.2in]{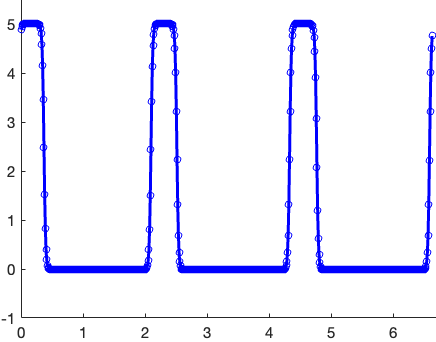}
\end{tabular}
\caption{\small Coarsening of interface under the faceting gradient flow, \eqref{e:Facet} with system parameters $\alpha=0.1, \beta=5, \kappa_*=5$ and $N=600$ grid points. (Top) Images of curve in $\mbbR^2$ and (bottom) plot of the corresponding curvature versus position along $\mbbS$  at times $t=2.2\times10^{-4}, 9.8\times10^{-3}, 3.2\times10^{-2}, 1.35\times10^{-1}$, and $2.0$ respectively.  
The open circle indicates the location $\gamma(0)$ on the curve $\Gamma$.}
\label{f:facet}
\end{figure} 

In the $\alpha\ll1$ limit we anticipate quasi-adiabatic dynamics in the front positions $\vp=(p_1, \ldots, p_{2N})=\vp(t)$.  We write $U_N=(\psi_N(\cdot;\vp\,),g)^t$ and expand
$$U(s;\vp\,)
= U_N +\delta U_1,$$
where  $g$ is a constant. 
The flow \eqref{e:FGF} takes the form
$$ \delta^{-1}\nabla_{\vp}\, \psi_N\cdot \dot{\vp}+
\partial_tU_1  = 
\cR(\cdot;\vp\,) + \cL_{\vp}\, U_1  +\delta\cN(U_1),
$$
where the $\delta$-scaled residual $\cR:=\delta^{-1}\cM\cM^t(\nabla_\mrI\cE)(U_N)=O(1)$ drives the slow dynamics of the front positions. 
The dominant linearity $\cL_{\vp}=\cL(U_N)$ is given in \eqref{e:FOE-Lin}. The exact linearization has $O(\delta)$ corrections that arise from $\psi_N$ being a quasi-equilibrium. These can be incorporated into the residual terms. The nonlinear terms $\cN$ arising from the expansion about $U_N$ are formally lower order. This quasi-adiabatic framework has been successfully applied to rigorously capture the slow front dynamics when the parameters $\vp$  evolve on a slower time scale than that of the relaxation for $U_1,$ \cite{KP_RG1}, \cite{KP_RG2}. This approach requires a spectral dichotomy for the operator $\cL_{\vp}$. The operators $\cL_{\vp}$ are taken to be piece-wise constant in time, subject to updates on time scales that are long compared to the relaxation time of the error term. Arc length can be taken to be spatially and temporally constant on the slow time scales, with spatial reparameterized at each temporal jump point setting the updated arc length to be constant over $\mbbS.$

\begin{figure}
\begin{center}
\includegraphics[width=3.0in]{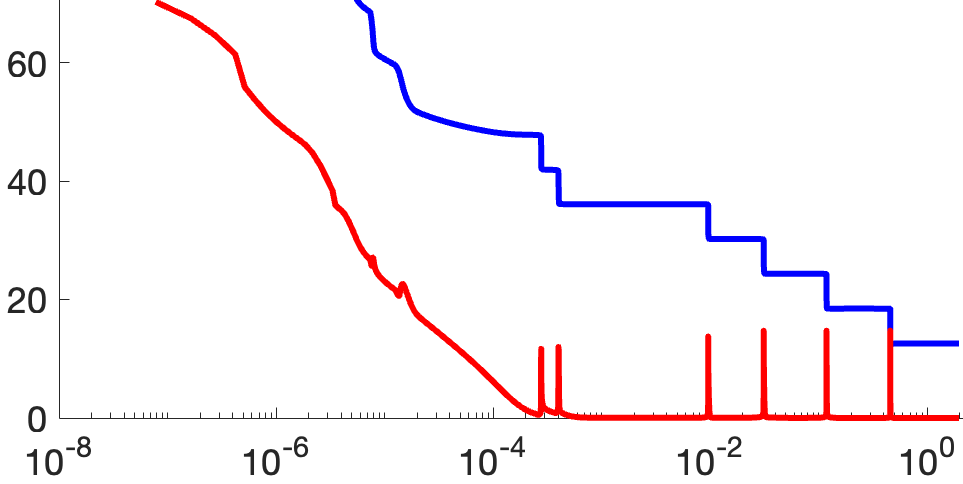}
\end{center}
\caption{\small A semi-log plot of system energy $\cE_{\rm Fc}$ in blue and the functional residual $\cR_{\kappa}$ in red versus computational time for the simulation in Figure\,\ref{f:facet}. }
\label{f:facet_energy}
\end{figure} 

\subsection{Computational Results}
We supplement discussion of the quasi-adiabatic reduction with numerical simulations of the gradient flow \eqref{e:Facet-NV} associated to the faceting energy. A description of the numerical scheme is presented Section\,\ref{s:numer} of the Appendix. Figure\,\ref{f:facet} gives the system parameters and the images $\Gamma=\Gamma(t)$ of the interface and the associated curvature $\kappa(\cdot,t):\mbbS\mapsto\mbbR$ at sequence of five computational times as indicated. The initial spinodal decomposition yields 18 fronts that arrange into 9 pairs of up-down fronts that range from zero curvature to curvature near $\kappa_*.$ These coarsen in several events finally arriving at an equilibrium comprised of six fronts arranged symmetrically to yield an equilateral triangle with $\kappa_*$-rounded corners. Collectively this represents quasi-steady front motion converging to a symmetric equilibrium, illustrating the dynamics suggested by the approach sketched above.  Figure\,\ref{f:facet_energy} presents a semi-log plot of the system energy and functional variation residual $\cR_\kappa$ versus time. The energy decays monotonically, with sharp stair-step drops at the coarsening events. The functional residual converges at $t=2\times 10^{-4}$ to a value of $10^{-3}$, with six subsequent sharp excursions corresponding to the coarsening events that take the 9 pairs of fronts down to 3 pairs. The quasi-adiabatic analysis has the potential to capture the system evolution during the regimes $\cR_{\kappa}\ll 1.$ 

%\begin{figure}
%\begin{tabular}{cc}
%\includegraphics[height=2.0in]{} &
%\includegraphics[height=2.0in]{} 
%\end{tabular}

%%\includegraphics[width=4.0in]{Brine/Facet6b_ACres.png} 
%\caption{\small  Temporal Decay of Energy and Allen-Cahn Residual.
%Parameters: $\alpha=0.0316$, $\kappa_*=5$, $\sigma=5.$ 
%(left)  Semilog  time-trace of Faceted energy and $L^2$ norm of Allen Cahn residual $\|\cR_{\rm AC}\|_2$. 
% Coarsening from 6 to 5 facets induces spike in  $\|\cR_{\rm AC}\|_2$: excursion from 2D quasi-steady manifold into 4D space.
% (right)  Profiles of Allen Cahn residual. The AC residual is $O(\eps)$ with variation that is $O(\eps^2)$. }
%\end{figure}

\section{Two-Point Energies}
\label{s:TP}
In many applications interfaces have long range self-interactions. Generically these induce a long-range self-attraction and a short-range self-repulsion.  Models of these are naturally described via a two-point self-interaction kernel that govern interactions between two points of the interface. For a rapidly decaying kernel the two-point interactions are dominated by contributions from near contact points: sections of interface that are well separated in arc length but proximal in physical space. Near contact points are not self-evident from the intrinsic coordinates of the curve. Two-point energies break the intrinsic coordinate formulation of the energy, incorporating a kernel based upon the spatial distance $d(s,\ts)=\gamma(s)-\gamma(\ts)$ between two points on $\Gamma$.

\subsection{Bounded Two-Point Kernels}
We introduce a two-point interaction kernel $\mrA$ and associated energy
\beq
\label{e:AdhE}
\cE_{\mrA}(U) = \int_\mbbS\int_\mbbS \mrA(|d(s,\ts)|^2)\rmd\sigma_s\rmd\sigma_{\ts}
\eeq
where $d=\gamma(s)-\gamma(\ts)$ is the two-point distance vector and the subscript on the surface measure indicates the variable of integration. The two-point interaction kernel $\mrA:\mbbR_+\mapsto\mbbR$.  
Indeed we avoid potentials that are unbounded at $d=0$ partially due to numerical complications associated with infinite self-interaction energy of adjacent parts of the curve but also because self-intersection energies are indeed not infinite in physical reality. 

To take variations of the energy it is convenient to parameterize the variations through the extrinsic vector field $\mbbV.$ Using $\delta$ as a parameterization variable the $\phi(\delta)$
path \eqref{e:path} induces a $\gamma(\cdot;\delta)$ path, whose first variations satisfy
\beq\label{e:Extrinsic-variation}
\begin{aligned}
U_1 &= \cM(U) \mbbV,\\
\gamma_1 &= \mbbV.
\end{aligned}
\eeq
The $\mbbV$-formulation avoids the need to invert $\cM$ to write $\gamma_1$ in terms of $U_1.$ From these relations the variation of the adhesion energy takes the form
\beq \begin{aligned}
\partial_\delta\cE_{\mrA}(\phi)\bigl|_{\delta=0} &= \int_\mbbS\int_\mbbS\left( 2\mrA'(|d|^2) d\cdot (\gamma_1(s)-\gamma_1(\ts)) + \mrA(|d|^2)  \left(\frac{g_1(s)}{g(s)}+\frac{g_1(\ts)}{g(\ts)}\right) \right)\rmd\sigma_s\rmd\sigma_{\ts},\\
&= 
\int_\mbbS\int_\mbbS \Bigl[ 2\mrA'(|d|^2) d\cdot(\mbbV^n(s)n(s)+\mbbV^\tau(s)\tau(s)-\mbbV^n(\ts)n(\ts)-\mbbV^\tau(\ts)\tau(\ts)) + \\
 &\hspace{0.75in}\mrA(|d|^2) \left(\kappa(s) \mbbV^n(s)+\nabla_s\mbbV^\tau(s) - \kappa(\ts)\mbbV^n(\ts)-\nabla_s\mbbV^\tau(\ts) \right)\Bigr] \rmd\sigma_s\rmd\sigma_{\ts}.
\end{aligned}
\eeq

Here $\prime$ acting on $\mrA$ denotes differentiation with respect to its single variable and all terms that are not first variations or extrinsic vector field are evaluated at $U.$ The integrals over $s$ and $\ts$ are interchangeable up to $d\mapsto -d$, so the $\mbbV^n(\ts)n(\ts)$ and $\mbbV^\tau(\ts)\tau(\ts)$  terms can be 
combined with their unmarked siblings. This simplification and an integration by parts  yields the reduced expression
\beq
\label{e:Adh_GF}
\begin{aligned}
\partial_\delta\cE_{\mrA} &= 2\int_{\mbbS^2}\left( 2\mrA'(|d|^2) d\cdot(\mbbV^n(s)n(s)+\mbbV^\tau(s)\tau(s)) +\mrA(|d|^2)(\kappa(s) \mbbV^n(s)+\nabla_s\mbbV^\tau(s))\right)\rmd\sigma_s\rmd\sigma_{\ts},\\
&= 2\int_{\mbbS^2} \left(2\mrA'(|d|^2)d\cdot n(s)+A(|d|^2)\kappa(s)\right)\mbbV^n(s) + 
    2\mrA'(|d|^2)d\cdot (\tau(s)- \nabla_s\gamma(s))\mbbV^\tau(s)\,\rmd\sigma_s\rmd\sigma_{\ts},\\
&= 2\int_{\mbbS^2} \left(2\mrA'(|d(s,\ts)|^2)d\cdot n(s)+A(|d|^2)\kappa(s)\right)\mbbV^n(s) \,\rmd\sigma_s\rmd\sigma_{\ts},
\end{aligned}
\eeq
where the $\mbbV^\tau$ prefactor drops out since $\nabla_s\gamma=\tau.$
Introducing the two-point force
$$ \mbbA(s):= 4 \int_\mbbS  \mrA'(|d|^2) d(s,\ts) \rmd\sigma_{\ts},$$
and the scalar surface energy density
$$ \mrB(s):= 2\int_\mbbS \mrA(|d(s,\ts)|^2) \rmd\sigma_{\ts},$$
the two-point energy extrinsic gradient satisfies 
\beq
\langle\nabla_\mrE \cE_\mrA,\mbbV\rangle_{L^2(\mbbS)} = \int_\mbbS\bpm \mbbA\cdot n+\kappa \mrB\\ 0 \epm \cdot\mbbV\,  \rmd \sigma.
\eeq
This motivates the definition
\beq\label{e:Adh-Ex_grad}\nabla_\mrE\cE_{\mrA}= \bpm \mbbA\cdot n+\kappa \mrB\\ 0 \epm,
\eeq
and the associated two-point $L^2$-gradient normal velocity
\beq\label{e:Adh-NV}
\mbbV^n_{\mrA}= -\left(\mbbA\cdot n+\kappa \mrB\right).
\eeq
The energy independence under rigid body motion implies that $\nabla_\mrE\cE\in\ker(\cM)^\bot$ and  $\nabla_\mrE\cE$ resides in the range of $\cM^\dag$. Surprisingly the rigid body energy independence leads to non-trivial relations, see Lemma\,\ref{l:TP_invar} in Section\,\ref{s:TP_invar} of the Appendix.
%Indeed, if $\mbbV$ is a rigid body motion, the induced energy variation would be zero, which implies the desired orthogonality.
This orthogonality allows us to define the associated intrinsic gradient through the inverse of $\cM,$
\beq \label{e:Adh-In_grad}
\nabla_\mrI \cE_{\mrA} = \left(\cM^\dag\right)^{-1} \bpm \mbbA\cdot n+\kappa \mrB\\ 0 \epm.
\eeq
This leads to the two-point $L^2(\mbbS)$-gradient flow
\beq\label{e:Adh-GV}
U_t = -\cM\cM^\dag \nabla_{\mrI}\cE_\mrA=-\cM\nabla_{\mrE}\cE_\mrA=-\cM
\bpm \mbbA\cdot n+\kappa \mrB\\ 0 \epm.
\eeq
At an equilibrium the linearization of the flow takes the form
$$\begin{aligned}
 \cL U_1=-\cM\left[\nabla^2_{\mrE}\cE_{\mrA}\right] \mbbV 
 = -\cM
\left[\nabla_\mrE^2\cE_\mrA \right]\cM^{-1} U_1,
\end{aligned}$$
which is a similarity transformation of the second extrinsic variation of the energy. Unlike the intrinsic 2nd variation, $\nabla_\mrI^2\cE$, the extrinsic 2nd variation is not generically self adjoint, and neither is $\cL.$ Indeed it can also be expressed in the product formulation analogous to \eqref{e:FGF}.
%$$\cL=-\cM\cM^\dag \nalba^2_\mrI \cE_\mrA,$$
%$$ \partial_t U_1= -%\left[\cM\nabla^2_\mrE\cE\cM^{-1}\right] U_1. 
%$$
The second variation of the two-point energy is complicated by the mixture of local and nonlocal operators and by the role of the $\cM^{-1}$. Simplifications can be obtained by scaling the two-point kernel to have an effective support that is small in comparison to the inverse of the $L^\infty$ norm of the curvatures of the curve. In this case the integrals over $\mbbS$ can be approximated by local contributions from near-intersection points. We pursue this scaling in the context of an adhesion-repulsion form of the two-point energy. 
\begin{figure}
\begin{tabular}{ccc}
%Short time & Mid-time & Equilibrium\\ 
\includegraphics[height=1.6in]{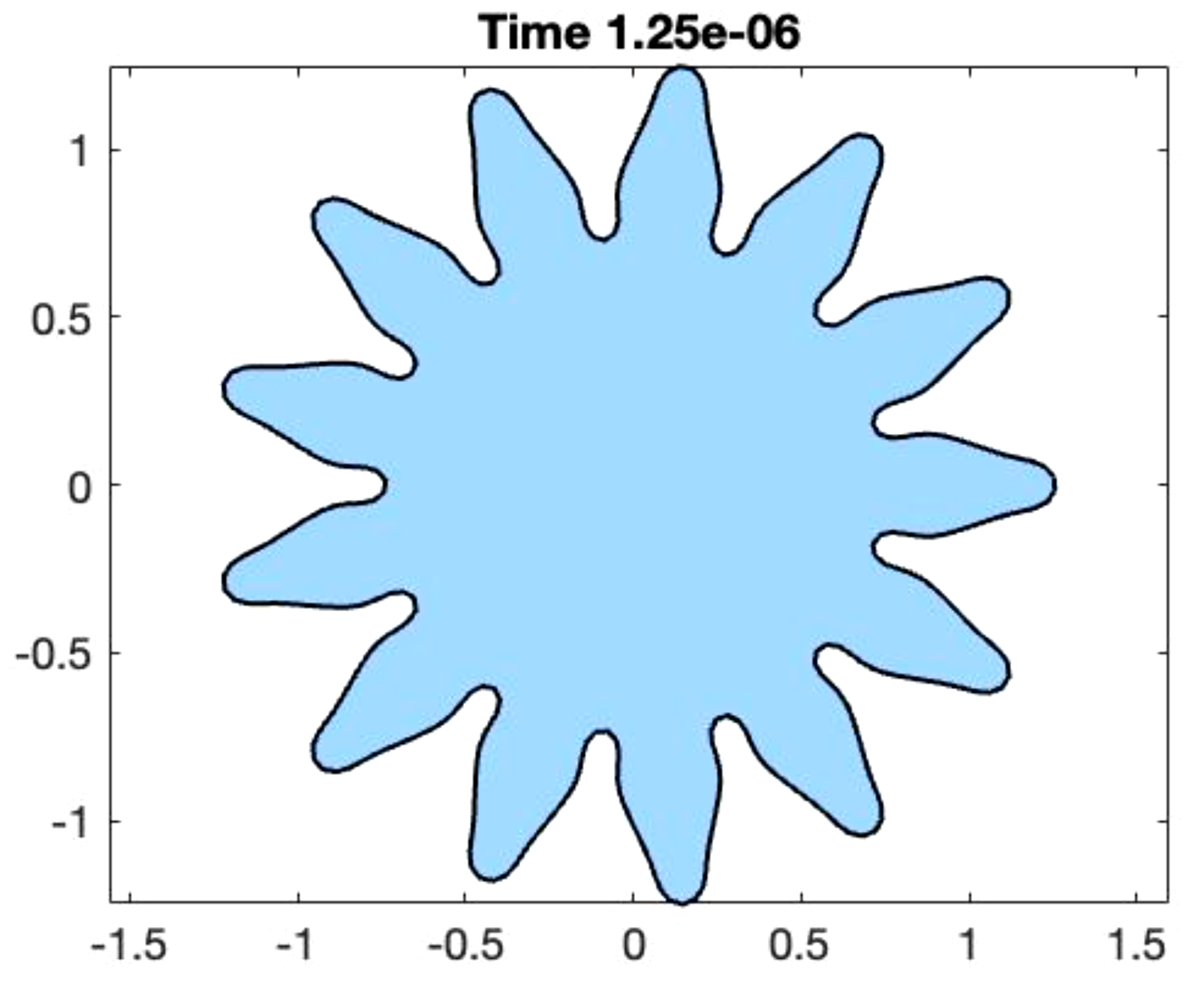} &
\includegraphics[height=1.6in]{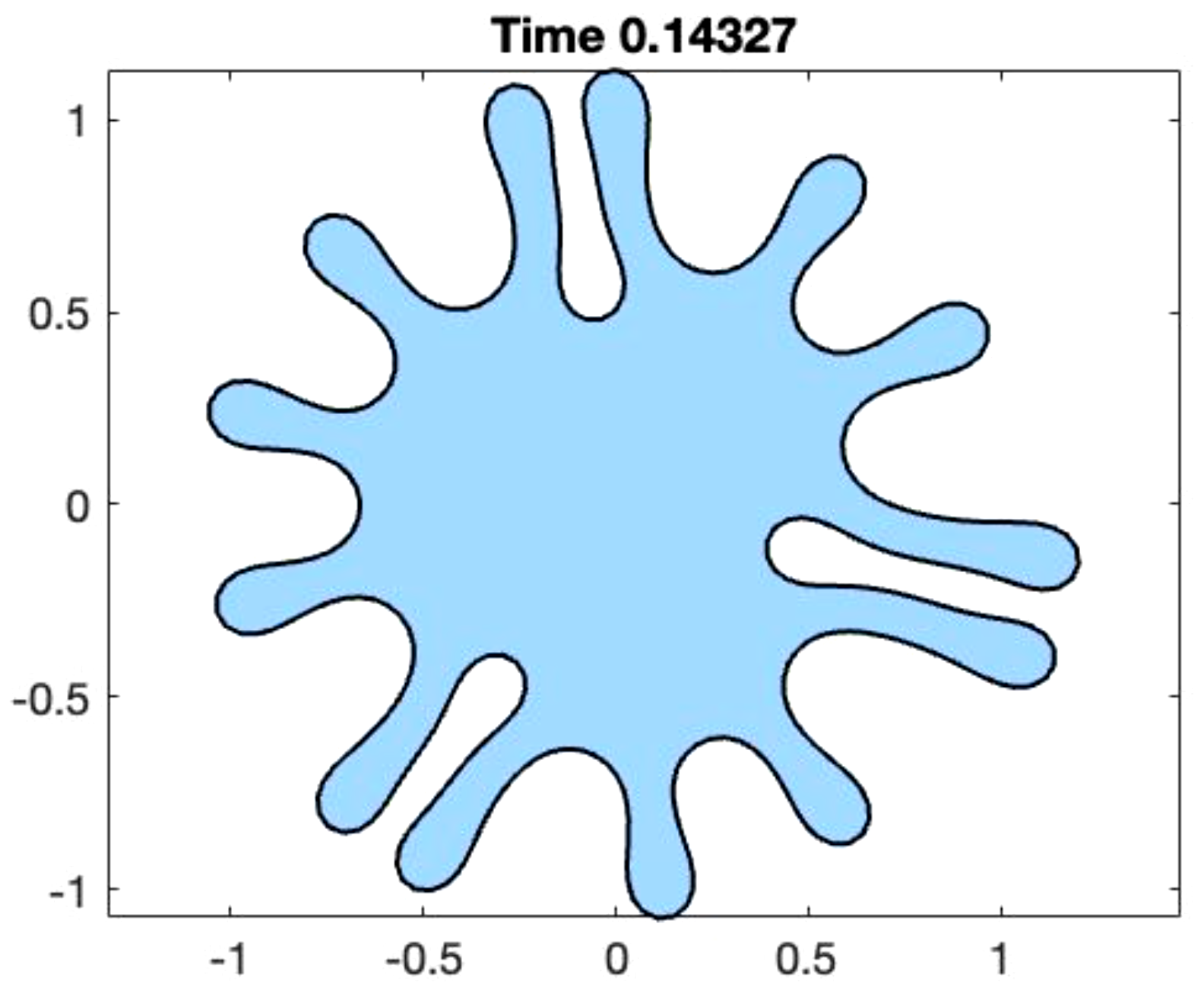} &
\includegraphics[height=1.6in]{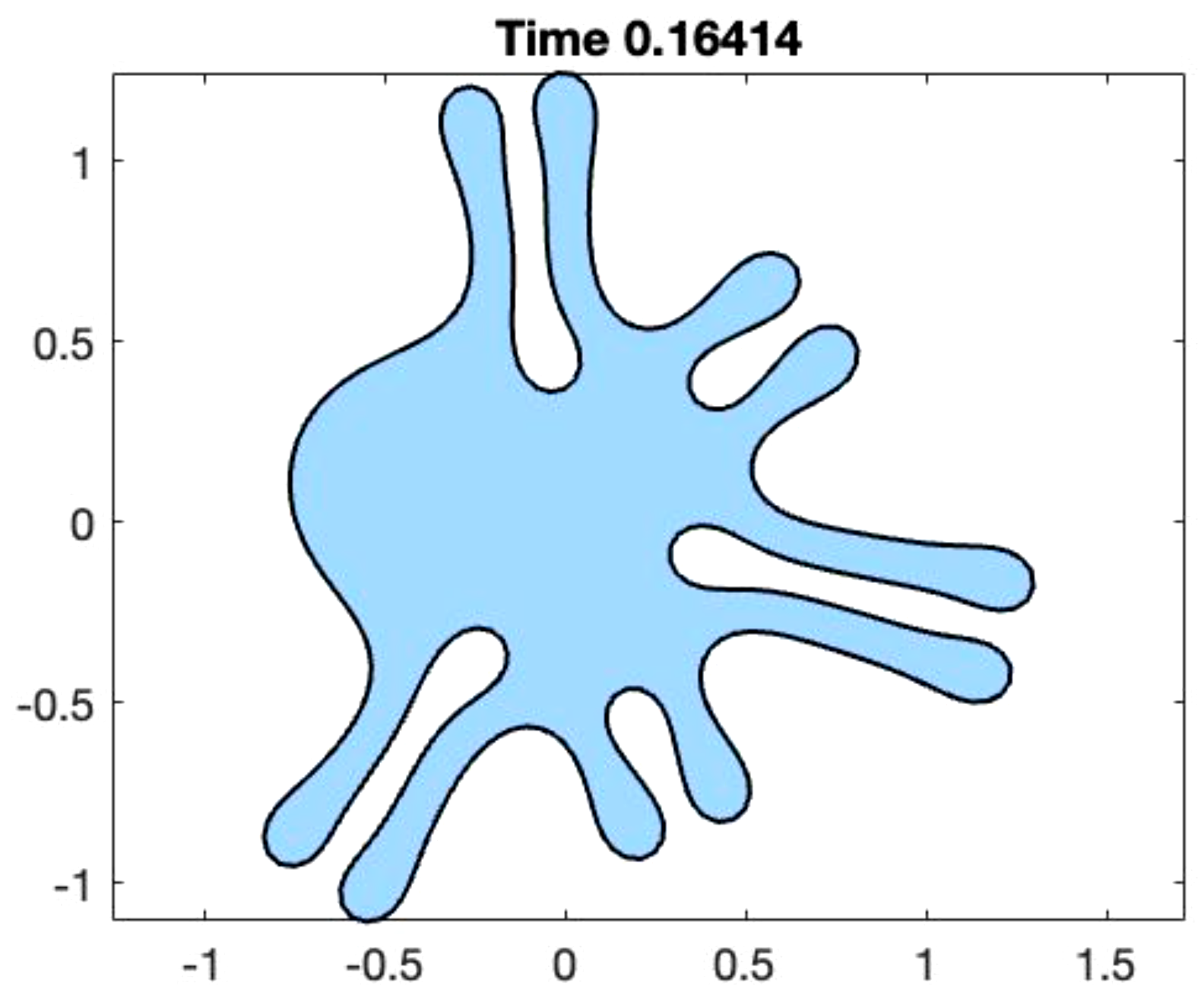} \\
\includegraphics[height=1.6in]{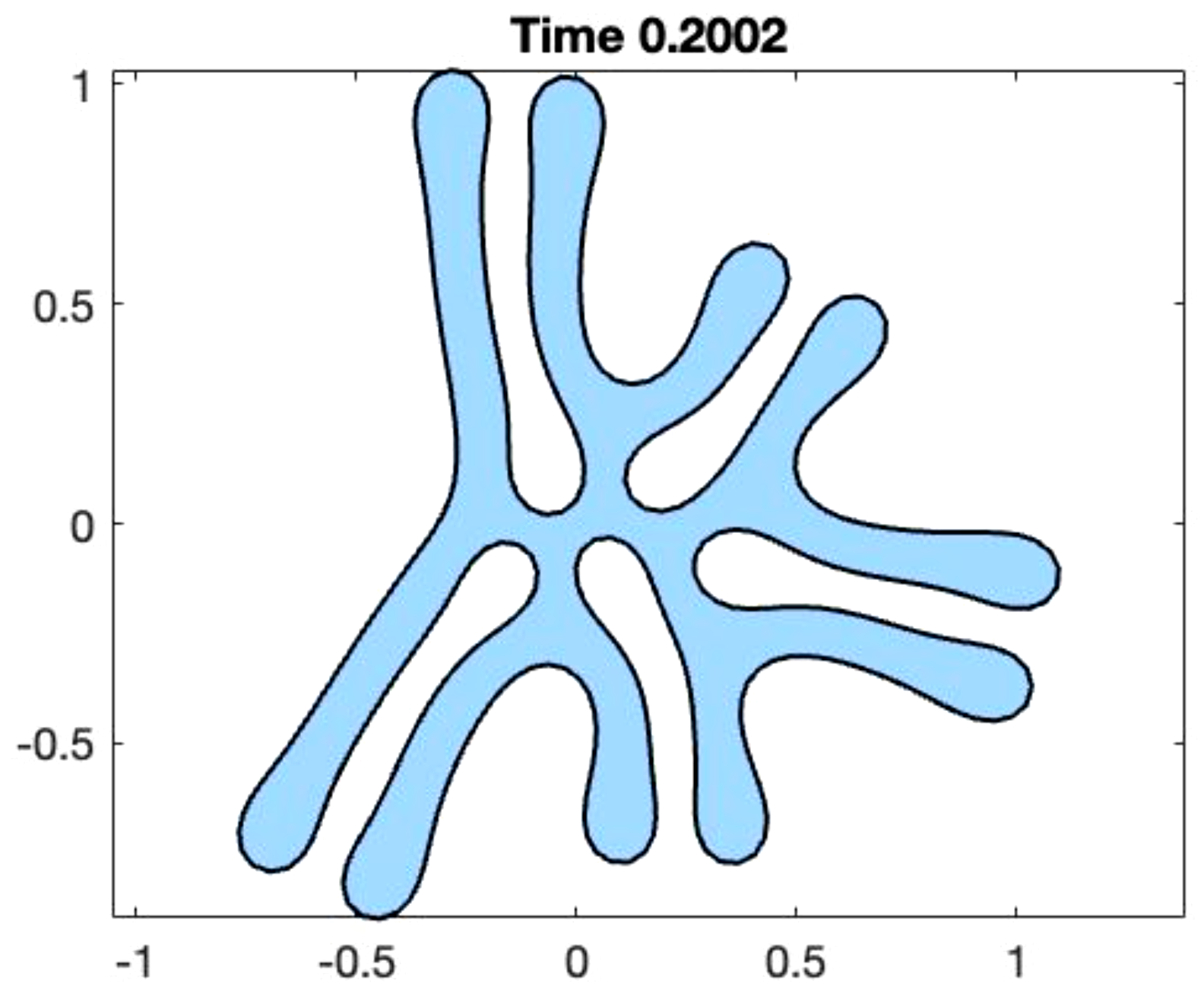} &
\includegraphics[height=1.6in]{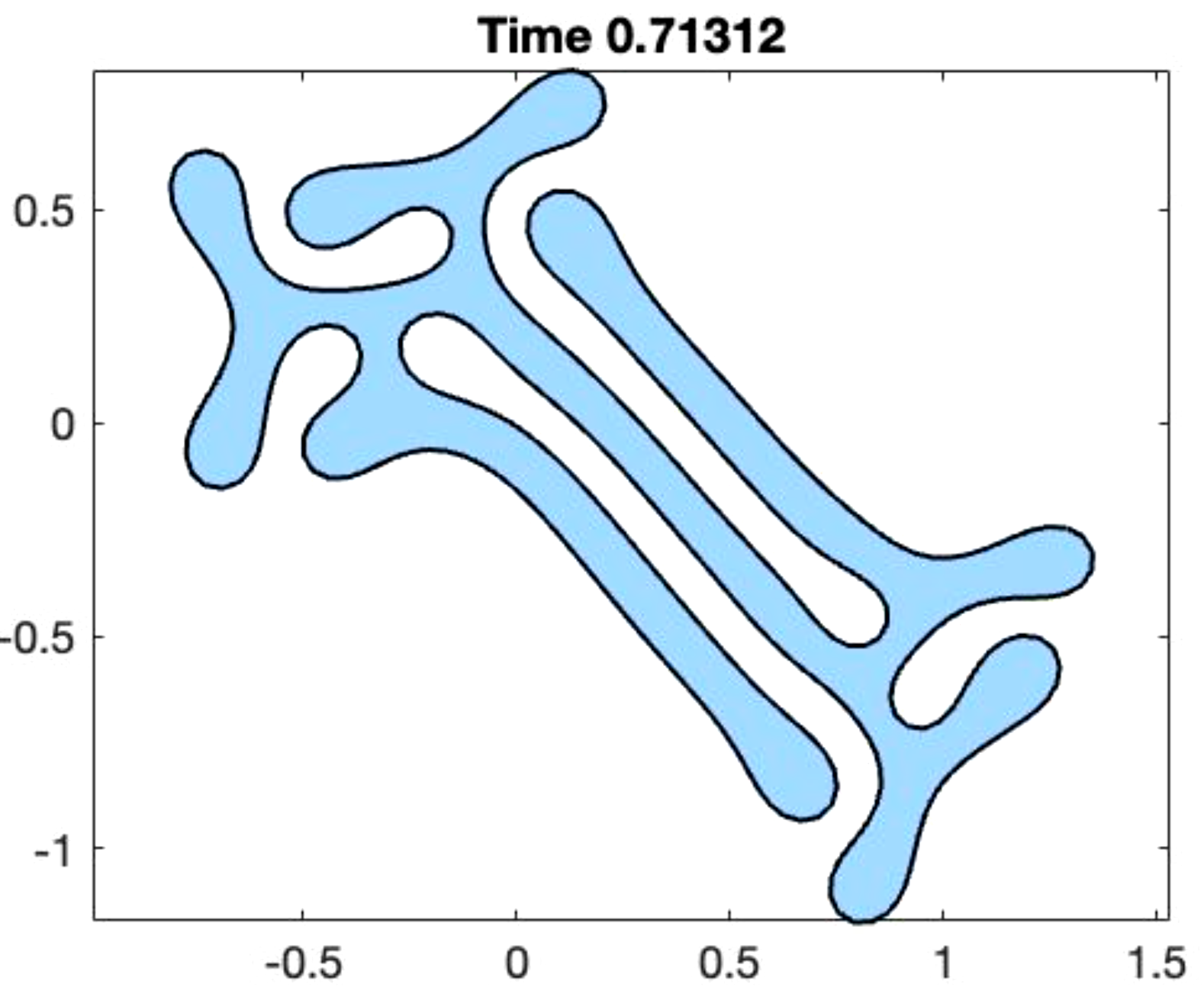} &
\includegraphics[height=1.6in]{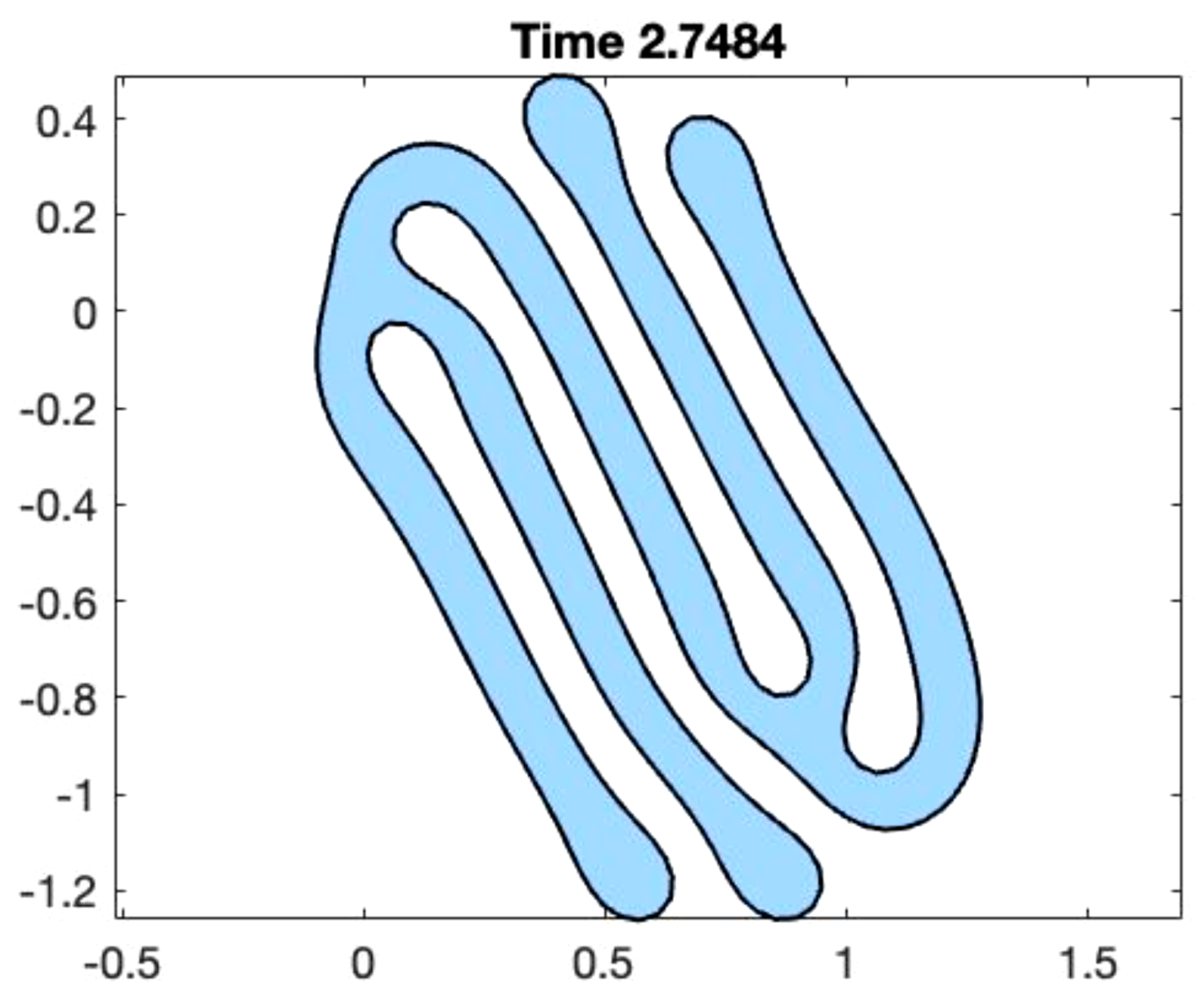} \\
\end{tabular}
\vskip 0.1in
\includegraphics[width=3.0in]{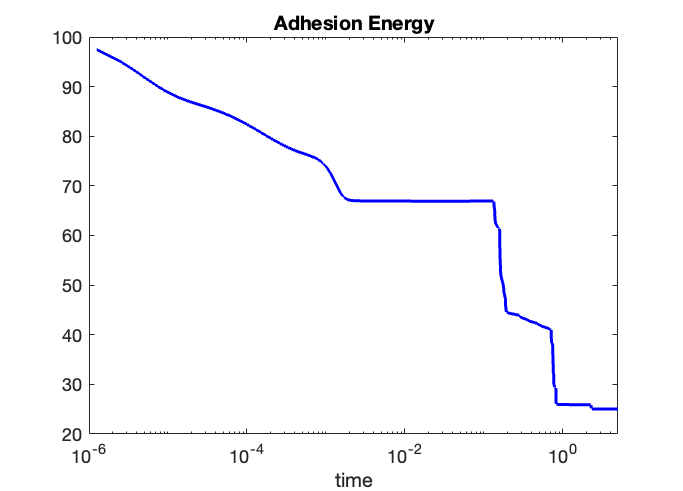} 
%\put(-395,125){\large\textbf{$\rho=1.2$}} 
%\put(-310,125){\large\textbf{$\rightarrow$}}
%\put(-155,125){\large\textbf{$\rightarrow$}}
\caption{\small (Top two rows) $L^2(\mbbS)$- gradient flow for the energy \eqref{e:CHA-Energy} from crenelated initial data (top-left). Parameters are given in Table\,\ref{T:T1} with $\rho=1.3$ and time of simulation is indicated above image. Interiors are shaded for clarity. (bottom row) Semi-log plot of system energy versus time showing monotonic decay to equilibrium.
}
\label{f:Snowflake}
\end{figure}

\subsection{Application: Canham-Helfrich Adhesion-Repulsion Energy}
\label{s:NSI}
A two-point adhesion repulsion energy arises from a bounded interaction kernel $\mrA$ with a Lenard-Jones type structure. More specifically $\mrA$ has a single, negative minima at a positive separation distance, a positive global maxima at zero, and tends to zero with growing separation distance. The interaction kernel drives the interface to pair disjoint sections of itself at an optimal separation distance while penalizing but not precluding self intersection. We investigate the properties of a two-point adhesion-repulsion kernel by pairing it with a weak Canham-Helfrich curvature term and a strong perimeter penalty,
\beq\label{e:CHA-Energy}
\cE_{\rm CHA}(U)=\cE_{\mrA}(U)+\frac{\beta}{2}(|\Gamma|-\rho|\Gamma_0|)^2 + \int_\mbbS \frac{\eps}{2} \kappa^2\,\rmd\sigma.
\eeq
Here $\Gamma_0$ denotes the image of the initial curve and the ``precompression-factor'' $\rho>1$ is the ratio of the initial curve length to the equilibrium curve length. For $\beta\gg1$ and $\rho=1$ this is a penalty term that models fixed densities common in biological membranes. The case $\rho>1$ corresponds to precompression induces a short transient interface lengthening and concomitant buckling motion that seeds bifurcations in subsequent long-term behavior.
We take the adhesion-repulsion interaction kernel in the functional form
\beq\label{e:LJ}
\mrA(\ell)=  A_0 \left(1- \frac{\ell}{a^2\ell_*^2}\right)\exp(- \ell/\ell_*^2). \eeq
%e^{ \sqrt{\ell}},$$
%\tanh^2\left(\frac{1-\beta}{\delta}\right)\left(\frac{1}{\ell_*}- %\frac{1}{|d|}\right)e^{-|d|}. $$
 The potential takes is maximum value $A_0>0$ at $\ell=0$, is negative for $\ell>(a\ell_*)^2$, has a single minima at $\ell_{\rm min}=\ell_*^2(1+a^2)$ with a negative value,
 $$ A(\ell_{\rm min})=-A_0 a^{-2}e^{-1-a^2},$$
 which is independent of $\ell_*$.  This Lennard-Jones type form enforces exponential decay of the interaction as is typical of electrostatic screening from a polar solvent.
 
The normal velocity takes the form
\beq
\label{e:CHA-GF}
\mbbV^n_{\rm CHA} =\left(\eps\left(\Delta_s+\frac{\kappa^2}{2}\right)-\mrB(s)-\sigma(|\Gamma|-\rho|\Gamma_0|)\right)\kappa -\mbbA(s)\cdot n(s).
\eeq

\subsection{Formal Near-self-intersection Analysis}
 The short screening length regime $\ell_*\ll 1$ is  physically interesting for electrochemical settings.   The rapid decay of interaction energy with distance simplifies the analysis. We present a heuristic characterization of the energy in a generic near self-interaction event. Assuming that absolute value of curvature is bounded by $\kappa_M$, then the arguments of a pair of close-approach points $\{\gamma(s),\gamma(\ts)\}$ with $|d(s,\ts)|\ll1$ must satisfy the ``turning radius'' condition  $|s-\ts|>\pi/\kappa_M$. If further, the point $(s,\ts)\in\mbbS^2$ is a local minimum of the distance function then we have $n(s)\cdot n(\ts)=-1$ and $d(s,\ts)\cdot \tau(s)=d(s,\ts)\cdot\tau(\ts)=0.$  In the regime $\ell_*\ll1$ and $\kappa_M=O(1)$ the interaction kernel decays more quickly than the curvature can bend the curve, the interaction energy generated by a close-approach point $(s_*,\ts_*)$ is approximated to leading order by integrals over parallel lines a distance $\ell_0$ apart, both parameterized with constant arc length. The dominant contribution arises from $\mbbA$.  We fix $s_*=0$, for $\ts$ on the same line as $s_*$ the contribution to $\mbbA\cdot n=0$ since $d(s_*,\ts)\cdot n=0$ there. For the integral in $\ts$ over the opposite line we have
$$ \mrA'(\ell) = A_0\left(\frac{\ell}{a^2\ell_*^4}-\frac{1+ a^2}{a^2\ell_*^2}\right)\exp(-\ell/\ell_*^2).$$
 The distance $|d(0,\ts)|^2=\ell_0^2+(\ts-\ts_*)^2,$ while $d(0,\ts)\cdot n(0)=  -\ell_0.$  We have the expression
$$\begin{aligned}
 \mrA'(|d|^2)d\cdot n& =
 %-d_0  A_0\left(\frac{d_0^2+\ts^2}{a^2\ell_*^4}-\frac{1+a^2}{a^2\ell_*^2}\right)\exp(-(d_0^2+\ts^2)/\ell_*^2),\\
 %&= 
 - \ell_0 A_0 e^{- \ell_0^2/\ell_*^2} \left(\frac{\ell_0^2+\ts^2}{a^2\ell_*^4}-\frac{1+a^2}{a^2\ell_*^2}\right)\exp(-\ts^2/\ell_*^2).
 \end{aligned}
 $$ 
 We assume $\ell_0\sim\ell_*\ll1$ and introduce the scaled distance  $\td_0=\ell_0/\ell_*=O(1).$ 
 The contribution from the opposing line yields 
$$ 
\mbbA\cdot n = \int_0^\infty \mrA'(|d|^2)d\cdot n\, \rmd \ts=  -\frac{\td_0 A_0 }{a^2} e^{-\td_0^2} \left(c_0 (\td_0^2-(a^2+1)) +c_1\right),
$$
where we have introduced
$$ c_p:=\int_0^\infty s^{2p} e^{-s^2}\,\rmd s.$$
Since $c_0>c_1$ the normal force $\mbbA\cdot n$ imposes a stable equilibrium distance
$$\td_*=\sqrt{a^2+1-c_1/c_0},$$ 
between the near contact pairs. 
%The normal component of the two-point force is zero at $\td_0=0,$ repulsive for $\td_0\in(0,\td_*)$ and attractive for $\td_0>\td_*.$ 
For the $\mrB$ term similar arguments show that the dominant contribution arises from the self-interactions of $\gamma(0)$ with the line containing $s=0$. On this line the dominant contribution arises at $s=0,$
$$ \mrB(0)=\frac{A_0\ell_*}{a^2} \left(a^2 c_0-c_1 + e^{-\td^2}\left((a^2-\td^2)c_0-c_1\right)\right)=O(\ell_*).$$
This term is lower order, scaling with $\ell$, while $\mbbA$ converges to a non-zero limit for $\ell_*\ll1.$

Returning to the Canham-Helfrich Adhesion-Repulsion energy, 
for a curve with $|\Gamma|=\rho |\Gamma_0|$ and curvature bounded in $H^2(\mbbS)$, the normal velocity at a near-approach point $(s,\ts)\in\mbbS^2$ with $|d(s,\ts)|=\td_0\ell_*$ satisfies
$$\mbbV^n_{\!\rm CHA}(s)=\mbbV^n_{\!\rm CHA}(\ts)=-m_0 A_0(\td_0^2-\td_*^2) + O(\eps\|\kappa\|_{H^2(\mbbS)},\ell_*), $$
where $m_0>0$ is independent of $\ell_*$ and $A_0.$ For sufficiently large value of $A_0$ and small value of $\ell_*$ the normal velocity will seek to stabilize the scaled optimal separation distance $\td_*^2.$

\begin{table}
\begin{tabular}{|c|c|c|c|}
\hline
$\cE_{\mrA}$ & $\ell_*=0.01$ & $a=1$& $\mrA_0=40$\\
\hline
$\cE_{|\Gamma|}$ \& $\cE_\lie$ & $\beta=60$ & $\rho\in\{1.2, 1.3, 1.4\}$ & $\eps=0.08$ \\
\hline
\end{tabular}
\vskip 0.05in
\caption{Parameters for Canham-Helfrich adhesion-repulsion energy \eqref{e:CHA-Energy} used in simulations depicted in Figure\,\ref{f:Snowflake} and \ref{f:fold}.}
\label{T:T1}
\end{table}

\subsection{Simulation of Canham-Helfrich Adhesion-Repulsion Gradient Flow}
We supplement the formal arguments of the Section\,\ref{s:NSI} with numerical simulations of the gradient flow. Figure\,\ref{f:Snowflake} shows the time evolution of the Canham-Helfrich Adhesion-Repulsion gradient flow, \eqref{e:CHA-GF} starting from a crenulated initial interface as depicted in the upper-left panel. Parameter values are given in Table\,\ref{T:T1}. 
The simulation encounters four distinct regimes. In the first regime, $t\in[0,1.5\times10^{-3}],$ the flow preserves the 13-fold symmetry of the initial data as the interface buckles to achieve its equilibrium length and energy decays at an exponential rate. In the second regime, $t\in[1.5\times10^{-3},0.2]$ symmetry is broken by finger pairing while energy is relatively constant. Interfaces typical of this regime are given by (top-middle, top-right) simulations at $t=0.1432$ and $t=0.16414.$ In the third regime, $t\in[0.2,0.7]$ the evolution is dominated  by labyrinthine patterns depicted in images (middle-left, middle-center) at times $t=0.2$ and $t=0.713$. On this time scale the interfaces rearrange through a sequence of tight packings that lead to increasingly lower energy foldings. The final regime, $t\in[0.7,2.5]$ shows slow relaxation to a locally optimal packing. This equilibrium is a layered state evocative of the Thylakoid membranes shown in Figure\,\ref{f:Thylakoid}. At equilibrium the interface recovers a $\pi$-symmetry not present in intermediate states.

\begin{figure}[h!]
\begin{tabular}{ccc}
%Short time & Mid-time & Equilibrium\\ 
\includegraphics[height=1.6in]{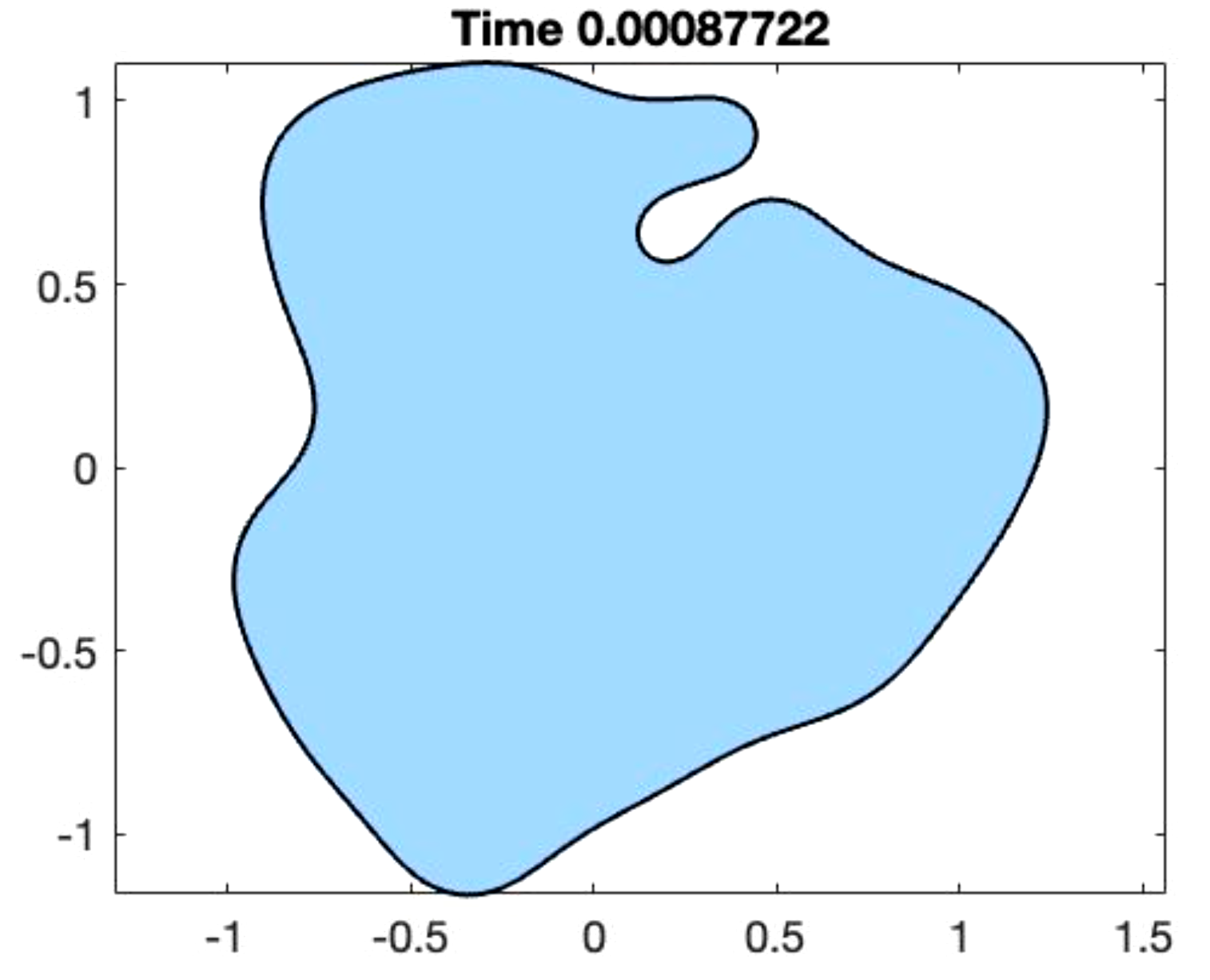} &
\includegraphics[height=1.6in]{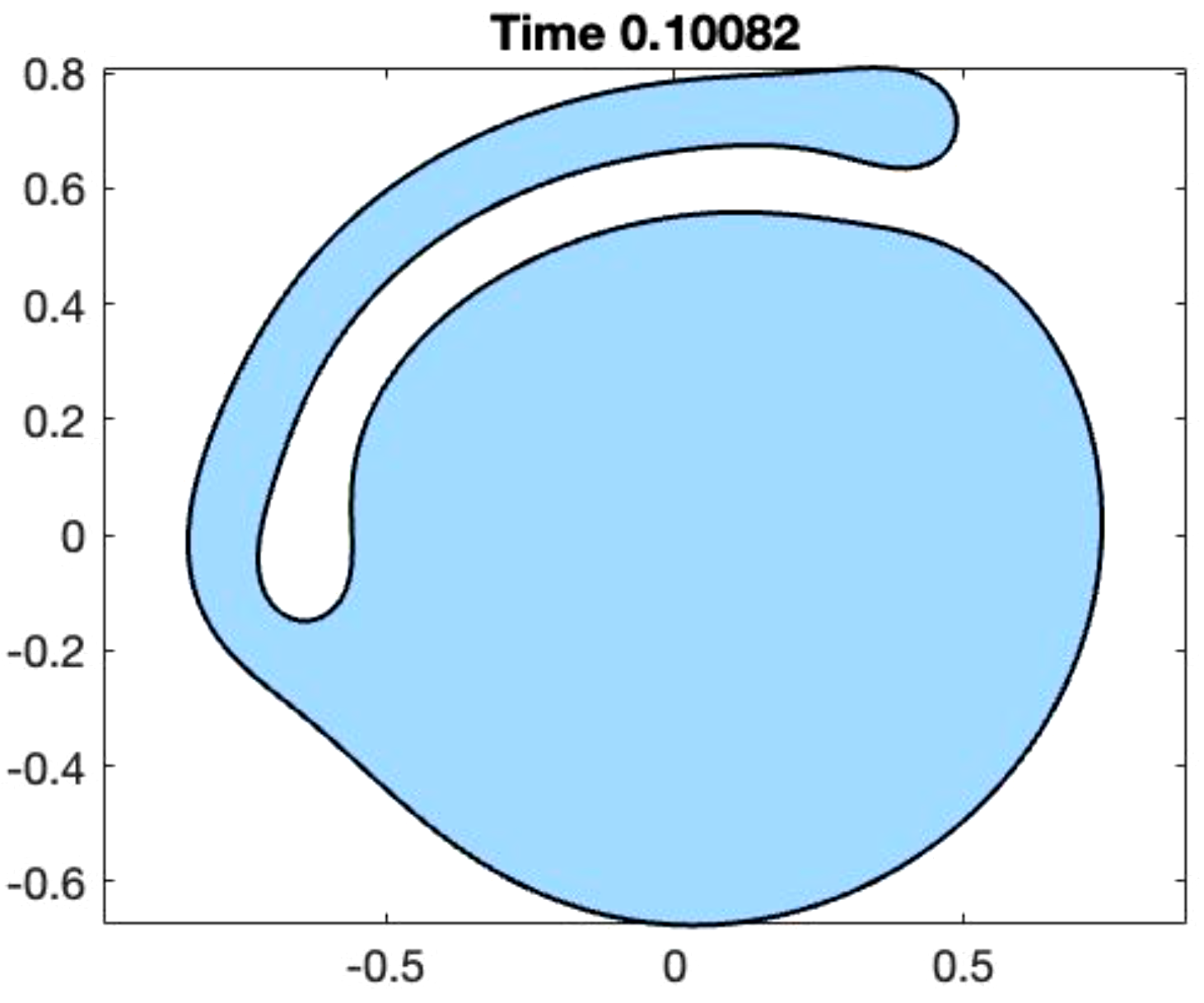} &
\includegraphics[height=1.6in]{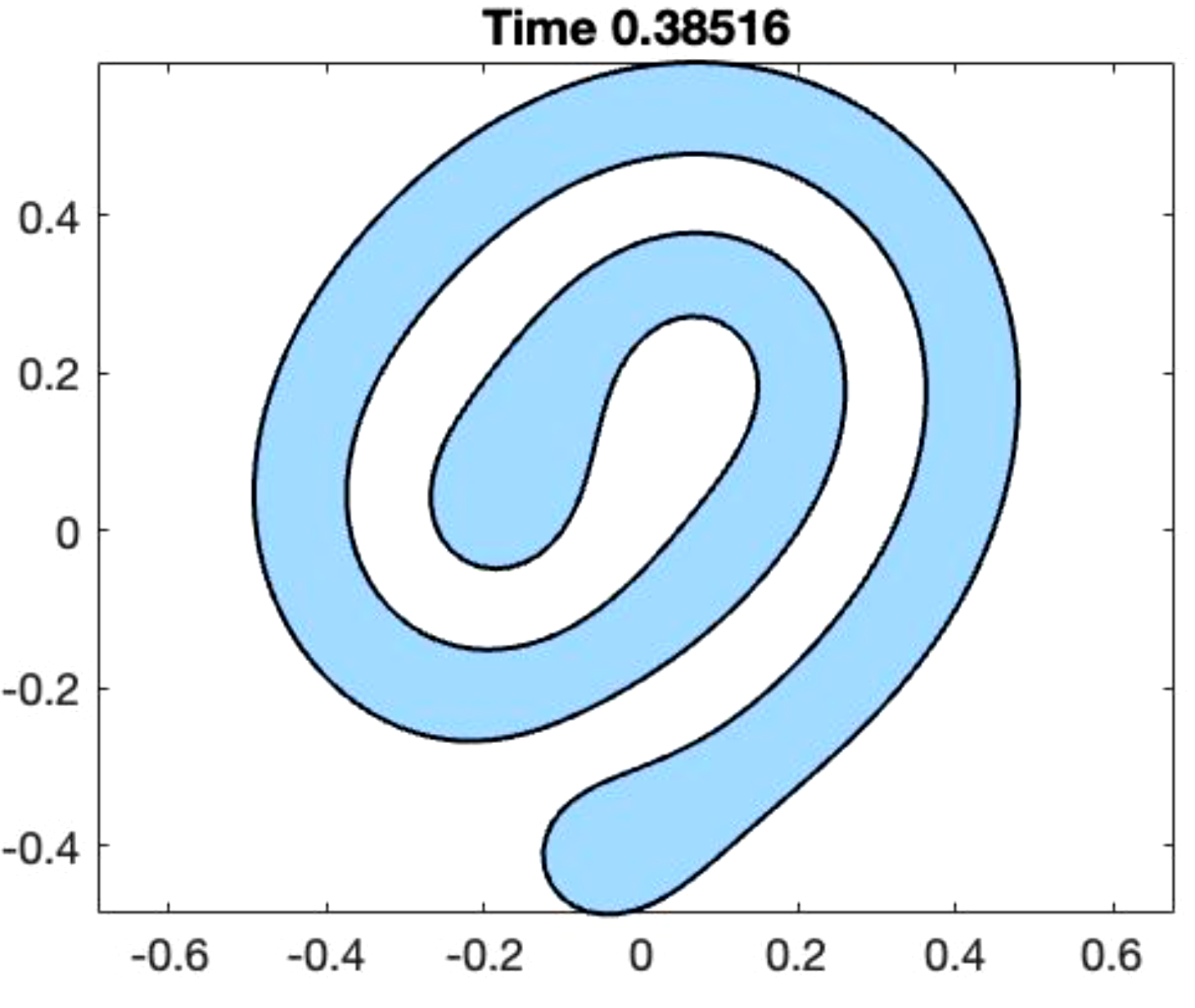} \\
\includegraphics[height=1.6in]{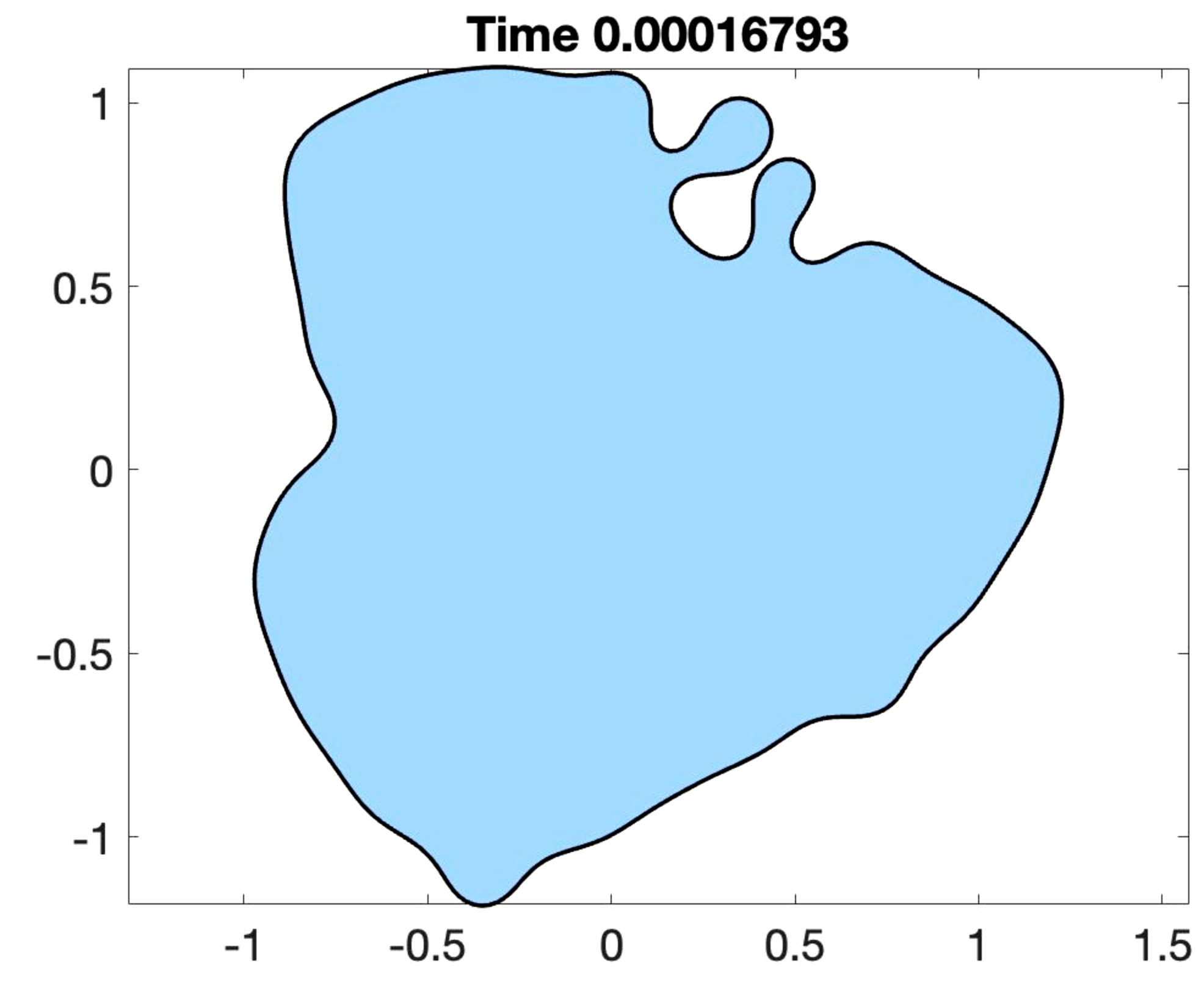} &
\includegraphics[height=1.6in]{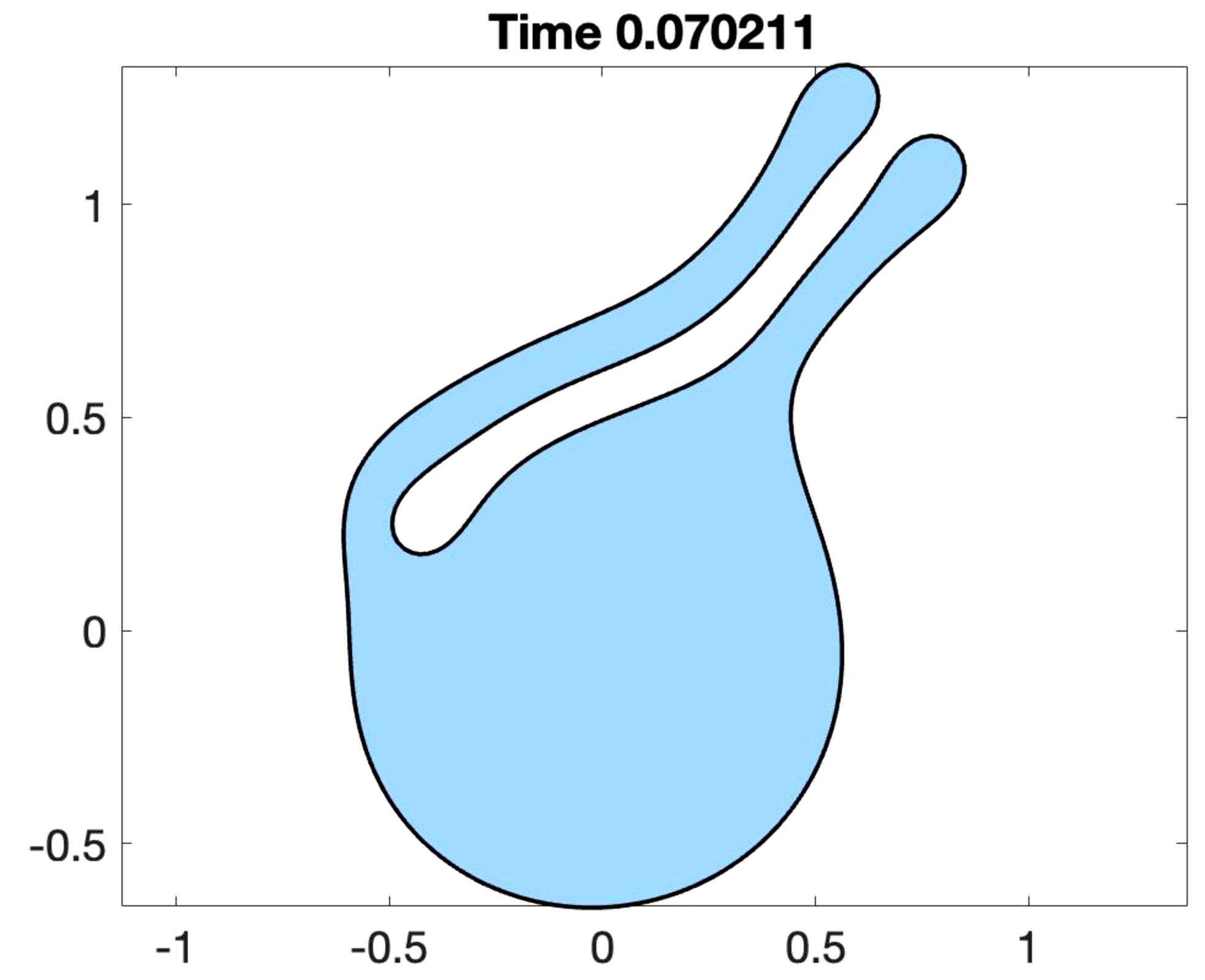} &
\includegraphics[height=1.6in]{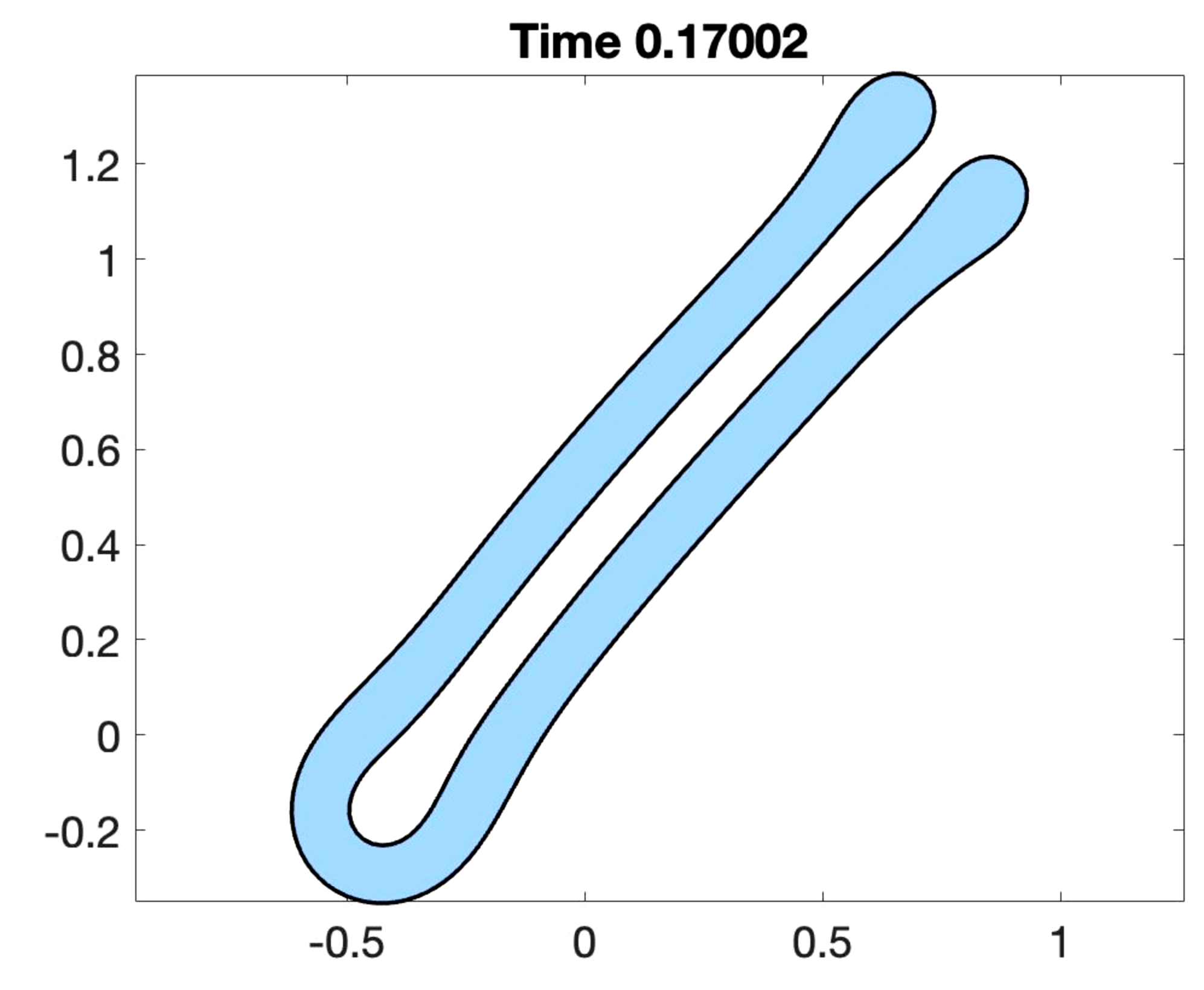} \\
\includegraphics[height=1.6in]{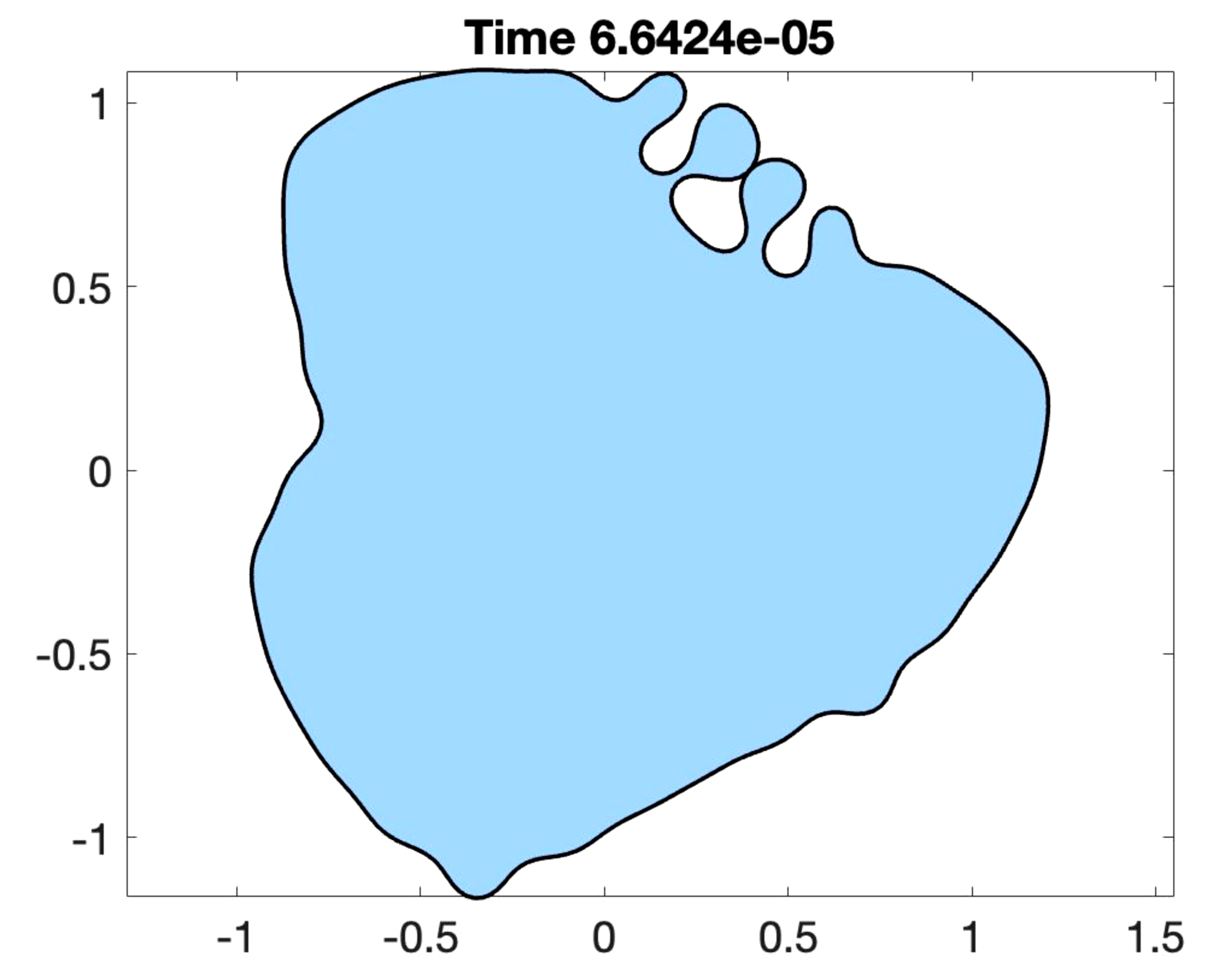} &
\includegraphics[height=1.6in]{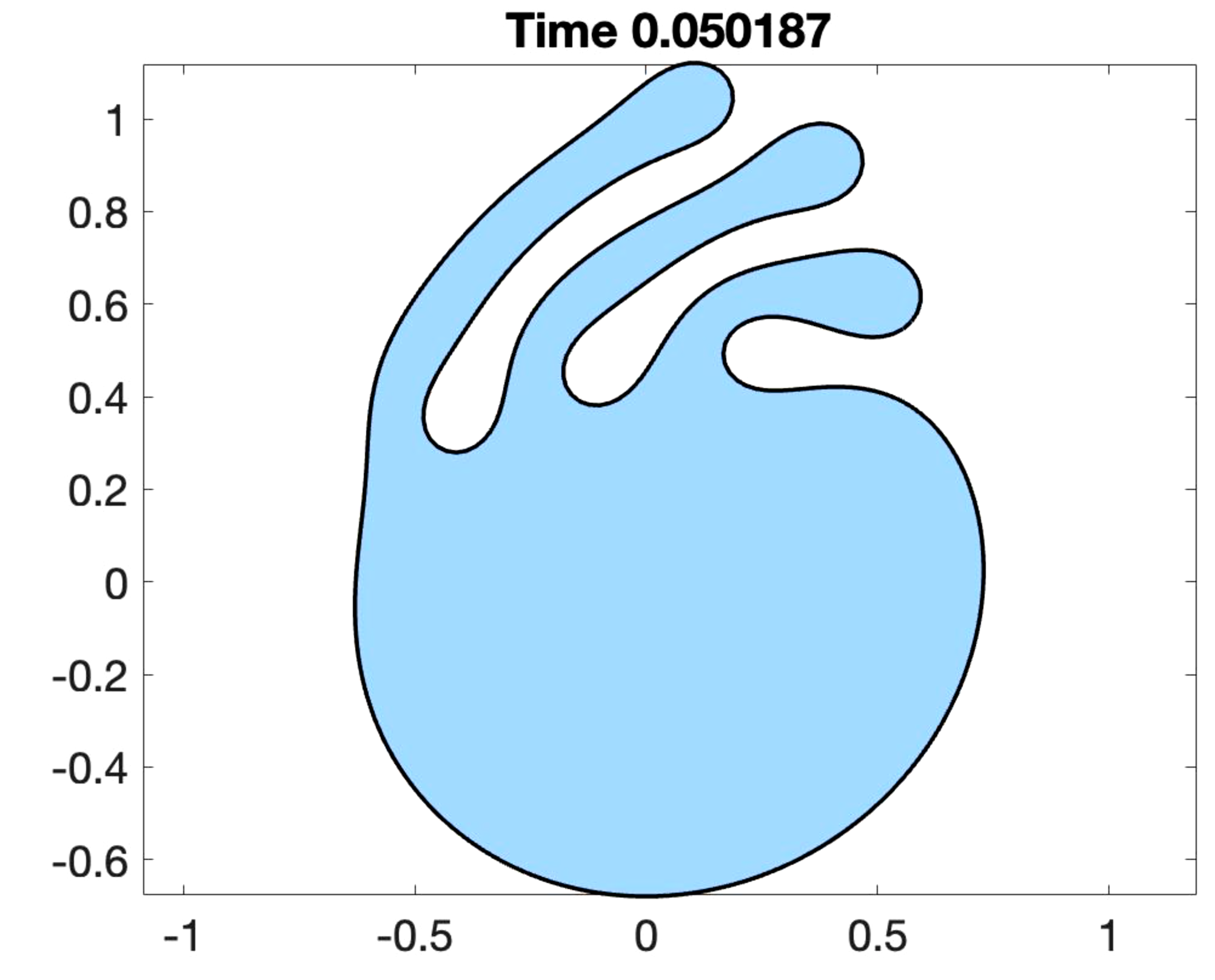} &
\includegraphics[height=1.6in]{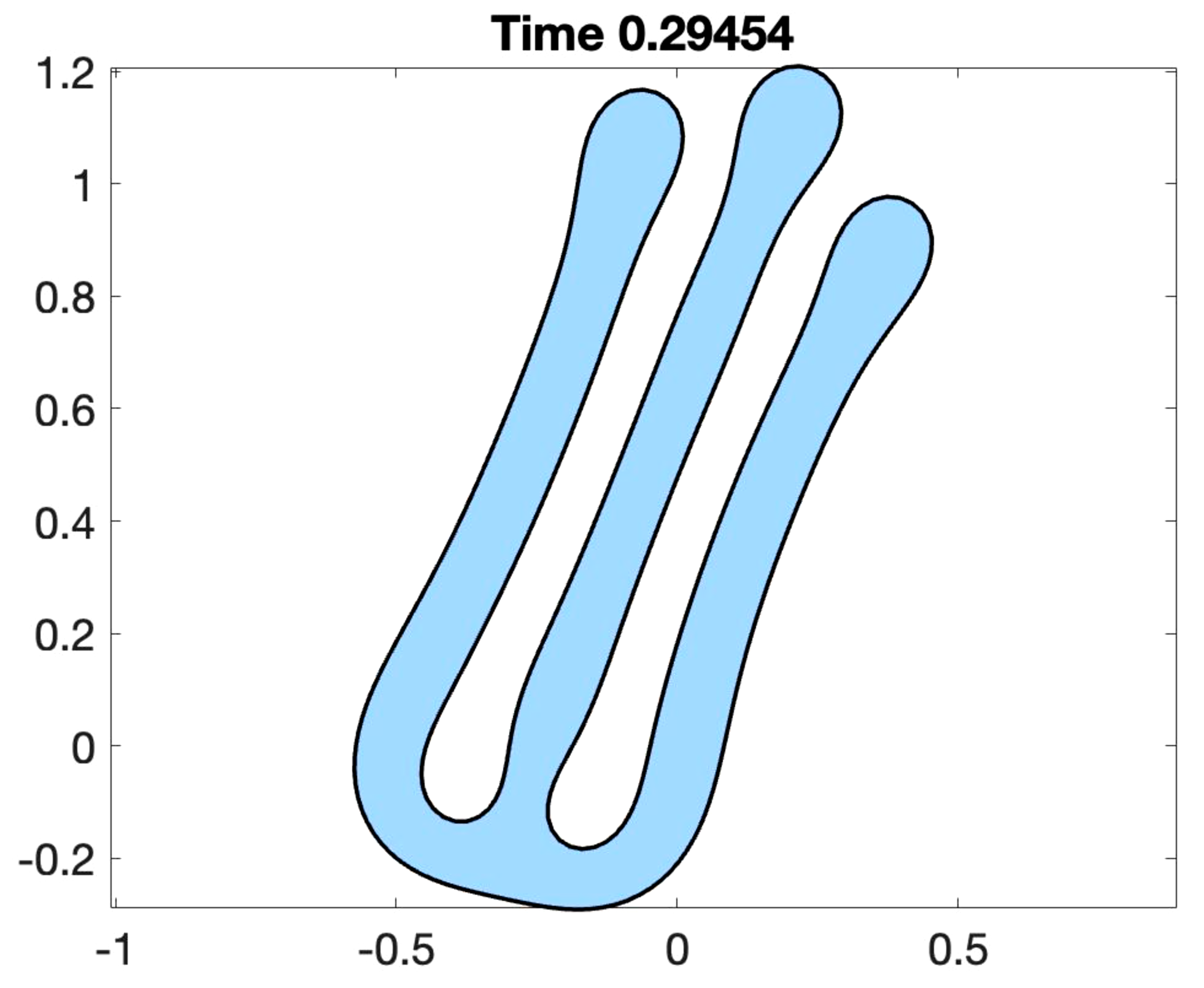} 
\end{tabular}
\put(-395,125){\large\textbf{$\rho=1.2$}} 
\put(-310,125){\large\textbf{$\rightarrow$}}
\put(-155,125){\large\textbf{$\rightarrow$}}
\put(-395, 8) {\large\textbf{$\rho=1.3$}} 
\put(-310,5){\large\textbf{$\rightarrow$}} 
\put(-155,5){\large\textbf{$\rightarrow$}}
\put(-395, -115) {\large\textbf{$\rho=1.4$}} 
\put(-310,-118){\large\textbf{$\rightarrow$}}
\put(-155,-118){\large\textbf{$\rightarrow$}}
\caption{\small  (left to right) Short time, mid-time, and final folded (equilibrium) configurations of interfaces under the adhesion energy gradient flow \eqref{e:CHA-Energy}. Simulations begin with identical initial data and parameters, differing only in the value of the precompression factor: $\rho= 1.2, 1.3,$ and $1.4$ as labeled in the image in the left column. Other parameter values are given in Table \ref{T:T1}.  }
\label{f:fold}
\end{figure}
The large values of $\beta$ drives the system on a fast time scale to arrive at a near equilibrium interface length. On these fast time scales the evolution is dominated by motion {\sl against curvature} regularized by the Willmore flow, see \cite{CP23} for analysis of this motion in a related system. 
The buckling nature of motion against curvature generates folds within the interface.    Figure\,\ref{f:fold} depicts simulations arising from identical initial data with different values of the precompression parameter $\rho.$ Increasing $\rho$ generates more folds on the short time scale which has a significant impact on the final folded state. 
The three simulations are for precompression factors $\rho=1.2, 1.3,$ and $1.4$ in the top, middle, and bottom rows respectively. The initial folds evolve into fingers that initiate a traveling-wave motion. The normal velocity lengthens the fingers, pulling them into parallel sheets on an intermediate time scale depicted in the middle column. On a long time-scale the flow reaches its final folded state, depicted in the third column. Increasing $\rho$ increases the final surface area and the number of folds. The effect on the end state is a bifurcation from a single 'rolled-up' sheet into a family of two- and three-stacked layers of flat sheets. Increasing $\rho$ beyond the value $1.4$ can lead to curve crossing in the fast transient. Curve crossing is inhibited but not excluded as the repulsive term, scaled by $A_0$, is finite, and can be overwhelmed by motion against curvature during the buckling transient. Indeed the $\rho=1.4$ simulation shows a glancing self-intersection event in the short-time column. 

\section{Discussion}
We presented a derivation of the gradient flows of local and two-point interfacial energies in a general framework. Throughout the analysis interfaces are assumed to be far from self-intersection. Self-intersection requires total negative curvature in excess of $2\pi.$ For the faceting energy, which seeks to minimize negative curvatures this assumption generically holds for quasi-equilibrium states. Energies dominated by an adhesion-repulsion two-point energy generically do introduce negative curvatures, but the repulsion core inhibits self-intersection.  Rigorous non-self-intersection results for these systems are plausible given uniform bounds on curvatures. For many of the energies proposed it is straightforward to demonstrate the existence of global minimizers over function spaces on $\mbbS$ that incorporate spatially constant arc-length parameterization, the challenge is to show that any minimizer so constructed does not non-self intersection.

 A foci for future work is to conduct the spectral analysis of linearization about stationary and quasi-steady patterns. The spectral analysis of  spiral waves which arise as reductions of reaction-diffusion systems is well established \cite{SS_23}.  Bifurcations of classes of Delaunay surfaces immersed in $\mbbR^n$ have been conducted as an eigenvalue problem associated to energies whose critical points subject to enclosed volume or surface area constraints have constant mean curvature, \cite{Koiso_15}. Both of these classes of results are encouraging.

 The ultimate goal is to extend the analysis to study quasi-steady patterns and bifurcation in reaction diffusion systems on an evolving interface. Many problems of interest to biological membrane evolution can be formulated in terms of an energy coupling interfacial configuration to surface diffusion and reaction of active materials that are bound to the membrane, \cite{Shemesh-14, Low_14, Gera_18}. The recent development of computational tools for the bifurcations arising in these geometric problems should greatly increase the interest in this rich class of problems, further inspiring model development, \cite{MU_Arxiv}.

\section{Appendix}
This provides discussion of the numerical scheme and results that are tangential to the main issues.

\subsection{Discussion of Numerical Scheme}
\label{s:numer}
We outline the approach to the numerical resolution of the normal velocity and present a numerical convergence study for an example problem.  The approach employs the scaled arc length formulation over $\mbbS$ with $|\mbbS|=1.$ The parameterization satisfies 
\begin{equation}
\label{ap:scaledAL}
g=|\partial_s \gamma| = L 
\end{equation}
for all $s$ where $L(t)$ is the length of the curve. This has been used successfully in several previous works of the authors for a range of applications \cite{apmoyles,appan}. This choice of arc length results in simple expressions for spatial derivatives, in particular 
\begin{eqnarray}
\label{ap:curvature}
    \kappa & = & - \frac{1}{L^3} \gamma_{ss} \cdot \gamma_s^{\perp}, \\
\label{ap:kappass}
    \Delta_s \kappa & = & \frac{1}{L^2} \frac{\partial^2 \kappa}{\partial s^2},
\end{eqnarray}
where $\bot$ denotes rotation by $\pi/2.$

For a convergence study we consider the Canham-Helfrich adhesion energy (\ref{e:CHA-Energy}). We formulate the numerical approximation as a Differential Algebraic Equation (DAE) \cite{apDAE} in which at each time step the vector positions $\gamma(s) = (x(s),y(s))$, curvatures $\kappa(s)$, higher derivatives $\kappa_{ss}(s)$, and normal velocities $V(s)$ are approximated on an $N$ point stencil. The curve length $L$ represents one additional unknown. 
This yields an implicit system for the $5N+1$ total unknowns through a second order finite difference in space in conjunction with implicit Euler time stepping. The equations are summarized below, with capital letters for the discrete variables with subscripts for spatial grid point and superscripts for time step. 
\begin{description}
    \item[N equations] With time step $k$ we have normal motion 
    \[
    ({\bf X}^{n+1}_j-{\bf X}_j^n) \cdot (D_1 {\bf X}^{n+1}_j)^\perp= k L^{n+1} V^{n+1}_j
    \]
    where ${\bf X}_j = (X_j,Y_j)$ and $D_1$ is the periodic second order finite difference approximation of first derivatives 
    \[
    D_1 X_j = \frac{(X_{j+1}-X_{j-1})}{2h}
    \]
    with $j+1$ and $j-1$ taken mod $N$ and $h = 1/N$. 
    \item[N equations] We preserve scaled arc length at the next time step 
    \beq
    \label{ap:scaled}
    |{\bf X}^{n+1}_{j+1} - {\bf X}^{n+1}_j|^2 = h^2[L^{n+1}]^2
    \eeq
    \item[N equations] Relate the curvature to positions, this is a second order finite difference approximation of (\ref{ap:curvature}).
    \item[N equations] Relate the Laplacian of curvature to curvature, this is a second order finite difference approximation of (\ref{ap:kappass}). 
    \item[N equations] Specify normal velocity values $V$ at each grid point 
    \[
    V^{n+1} = \epsilon (\kappa_{ss}^{n+1}-[\kappa^{n+1}]^3) - \kappa \beta(L^{n+1}-\rho L^0) + \mbox{adhesion}
    \]
    where the first two terms above come from (\ref{e:CHA-GF}) and the adhesion terms are discretizations of \eqref{e:Adh-NV} using trapezoidal rule for the discretization of the integrals. 
    \item[1 equation] There is an arbitrary constant in the tangential motion that needs to be fixed. This is accomplished by enforcing zero average tangential velocity: 
    \[
    \sum_{j=1}^N ({\bf X}^{n+1}_j-{\bf X}_j^n) \cdot (D_1 {\bf X}^{n+1}_j) = 0.
    \]
\end{description}
The resulting nonlinear system for the $5N+1$ unknowns at time level $n+1$ is solved using Newton's method with hand coded Jacobian entries. Adaptive time stepping with a user-specified local error tolerance $\sigma$ is used, with the local error for two steps with time step $k$ estimated by comparing to a computation with time step $2k$.

    The discretized systems are  highly nonlinear and stiff, sixth order in spatial variables for the adhesion energy models and eighth order for the faceting models. The high stiffness coupled with the DAE nature naturally suggest implicit time stepping methods. The dynamics feature a wide range of time scales which makes adaptive time stepping necessary. Accurately assessing the local error led to our use of Newton iterations for the implicit time steps. For the Newton iterations to be sparse solves, we used finite difference rather than spectral spatial approximation. Backward Euler was the simplest time stepping method to use, with the advantages of its good stability properties for stiff systems and the ease in which adaptive time stepping can be implemented. The extension of the system to include curvature, the normal velocity and other variables made hand coding the Jacobian matrix for Newton iterations practical. 
    While level set methods \cite{aplevel} can accommodate topological changes and easily extend to surfaces in 3D it would be a challenge to formulate the energetic effects considered in the models presented here. 

For the convergence problem we take initial data 
$$
    \gamma_0(s) =   r(2 \pi s)\tau( 2 \pi s),
$$
with $r(\theta) = 1 + 0.35 \cos(3 \theta)$. This initial parametrization is not scaled arc length, and a pre-processing step is conducted with Newton iterations to convert to scaled arc length in the approximate sense (\ref{ap:scaled}). The code includes a local smoothing strategy if these iterations do not converge, although that is not needed for this example. We take parameters in (\ref{e:CHA-Energy}):   $\beta=1$, $\rho=1.5$,  $\epsilon = 0.1$ and adhesion energy parameters $A_0=10$, $l_* = 1/2$, $a=1$. A run with $N=64$ grid points and local error tolerance $\sigma = 10^{-4}$ is shown in Figure~\ref{ap:figure}. 

\begin{figure}
\centerline{
    \includegraphics[width=5cm]{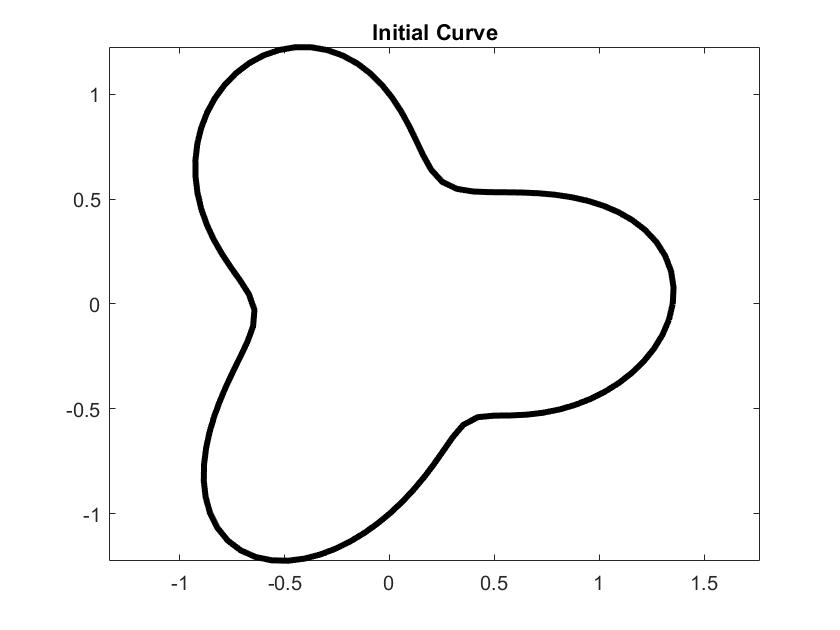}
    \includegraphics[width=5cm]{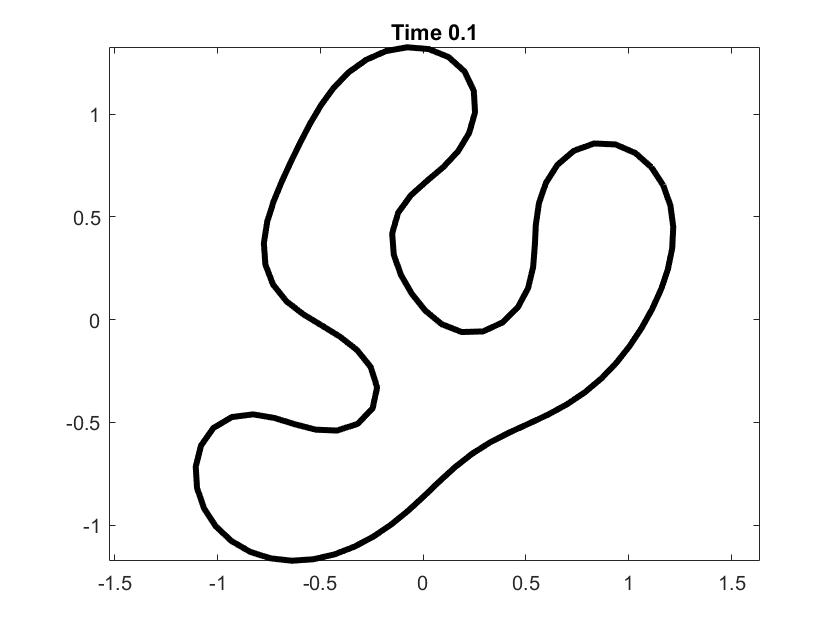}
    \includegraphics[width=5cm]{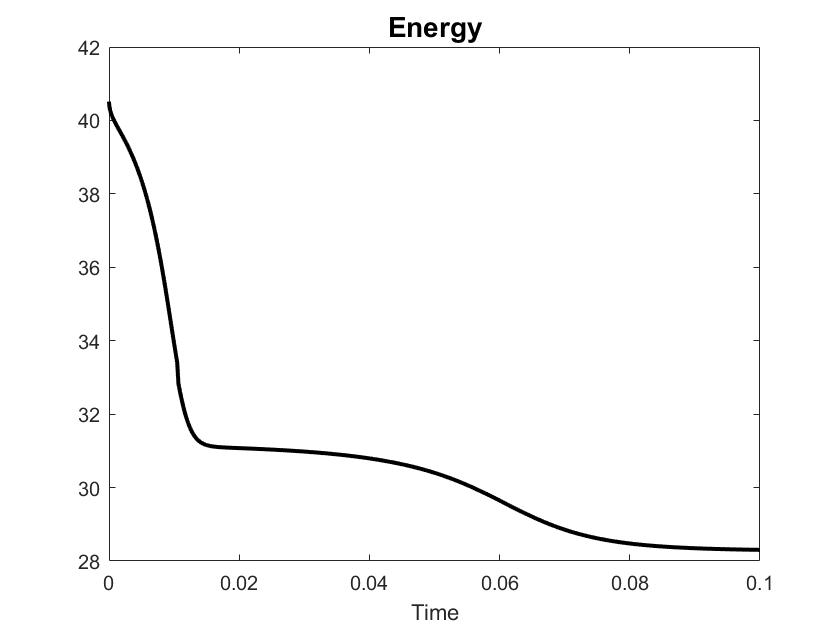}
    }
    \caption{\label{ap:figure} Numerical convergence study example showing from left to right: initial conditions, solution at $t=0.1$, and the Energy as a function of time.}
    \label{f:Numconv}
\end{figure}

Keeping the local error tolerance fixed, we compute solutions with $N=50$, 100, 200, 400, and 800 spatial grid points. We compare the maximum norm difference in position variables at common grid points between successively refined solutions in Table~\ref{ap:t1}.
\begin{table}
    \begin{tabular}{|l|c|c|} \hline
    $N$ & $||{\bf X}(2N) - {\bf X}(N)||_\infty $ & ratio \\ \hline
    50 & 0.1909 & \\
    100 & 0.0397 & 4.8 \\
    200 & 0.0101 & 3.9 \\
    400 & 0.0024 & 4.2 \\ \hline
    \end{tabular}
    \vspace{0.05in}
    \caption{\label{ap:t1} Comparison of computed spatial positions at $t=0.1$ to those from a grid with twice as many points.}
\end{table}
The error decreases by approximately a factor of 4 at each doubling of grid points, confirming second order spatial convergence. 

We also confirm the behavior of the adaptive time stepping by local error tolerance $\sigma = 10^{-3}$, $10^{-4}$, $10^{-5}$, $10^{-6}$ with $N=100$ fixed. The computation takes 65, 180, 571, and 1811 total time steps, respectively. With each ten-fold decrease in error tolerance the number of time steps taken increases approximately by a factor of $\sqrt{10} \approx 3.16$. This is consistent with a correctly identified second order local temporal error.

The MATLAB code that generated the results for the convergence study is available \cite{apcode}

\subsection{Reparameterization and Tangential Flow}
\label{s:reparam}
It well known that reparameterization has a direct connection to tangential motion of an interface,
\cite{Man_11}. For completeness we include a derivation. We consider a one-parameter family of mappings $m:\mbbS\times[-\delta_0,\delta_0]\mapsto\mbbS$
with expansion
\beq\label{e:m-DV} m(s;\delta)=s+\delta m_1(s) +O(\delta^2),
\eeq
where $m_1\in H^2_{\rm per}(\mbbS).$
The reparameterization induces the interface map $\tgam(\cdot;\delta):=\gamma\circ m(\cdot;\delta).$
This reparameterization induces an extrinsic vector field characterized in the follow lemma.
\begin{lemma}
\label{l:DV}
    Given a curve $\gamma$ with curvature and arc length pair $(\kappa, g)$. The first variation of $\gamma$ induced  by the one-parameter family of reparameterization maps $m=m(\delta)$ in \eqref{e:m-DV} induces intrinsic perturbations that correspond to the purely tangential extrinsic vector field $(\mbbV^n,\mbbV^\tau)^t=(0,m_1 g)^t.$
\end{lemma}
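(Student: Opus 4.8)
The plan is to compute the first variation $\gamma_1 := \partial_\delta \tgam(\cdot;\delta)\bigl|_{\delta=0}$ directly from the composition $\tgam = \gamma\circ m$ and then read off the extrinsic vector field from its decomposition into normal and tangential components via \eqref{e:gam-vel}. Since a reparameterization leaves the image $\Gamma(\gamma)$ unchanged, geometric intuition already indicates that the induced velocity must be purely tangential; the computation will confirm this and supply the precise tangential coefficient.

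First I would apply the chain rule to $\tgam(s;\delta)=\gamma(m(s;\delta))$, obtaining
$$\partial_\delta \tgam(s;\delta) = \partial_s\gamma\bigl(m(s;\delta)\bigr)\,\partial_\delta m(s;\delta).$$
Evaluating at $\delta=0$, where $m(s;0)=s$ and $\partial_\delta m(s;\delta)\bigl|_{\delta=0}=m_1(s)$ from the expansion \eqref{e:m-DV}, gives $\gamma_1 = (\partial_s\gamma)\,m_1$. The relation $\nabla_s\gamma=\vtau$ recorded in \eqref{e:tau-n} yields $\partial_s\gamma = g\vtau$, so that
$$\gamma_1 = m_1\,g\,\vtau.$$
Comparing this with the extrinsic decomposition $\gamma_1 = \mbbV^n n + \mbbV^\tau\vtau$ from \eqref{e:gam-vel}, with $\delta$ playing the role of the evolution parameter, immediately identifies $\mbbV^n=0$ and $\mbbV^\tau = m_1 g$, as claimed. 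The associated intrinsic perturbation then follows without further work from the extrinsic-to-intrinsic map $U_1 = \cM(U)\mbbV$ of \eqref{e:In-Ext_VF} and \eqref{e:Extrinsic-variation}, so that the reparameterization and the tangential field $(0,m_1 g)^t$ generate identical intrinsic data.

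I do not anticipate a genuine obstacle: the argument reduces to a single application of the chain rule together with the normalized gradient identity $\partial_s\gamma = g\vtau$. The only point requiring care is bookkeeping the normalization, namely that the surface gradient $\nabla_s$ carries the factor $1/g$, so that the \emph{unnormalized} derivative $\partial_s\gamma$ produces the arc-length factor $g$ in the tangential coefficient. This is precisely what renders $\mbbV^\tau = m_1 g$ rather than $m_1$, and it is the sole quantitative content of the lemma.
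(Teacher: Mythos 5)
Your argument is correct, but it takes a genuinely shorter route than the paper. You stop at the first variation of the map itself, $\gamma_1=m_1\partial_s\gamma=m_1 g\,\vtau$, read off $\mbbV=(0,m_1g)^t$ from the decomposition \eqref{e:gam-vel}, and then delegate the intrinsic perturbations to the extrinsic-to-intrinsic map $U_1=\cM\mbbV$ of \eqref{e:In-Ext_VF}. The paper instead carries out the full direct computation: it expands $\tg=|\partial_s\tgam|$ to obtain $g_1=g\nabla_s(m_1g)$ in \eqref{e:DV_g}, expands $\tilde\vtau$ and then $\tilde\kappa$ via \eqref{e:curv-def} to obtain $\kappa_1=m_1 g\,\nabla_s\kappa$ in \eqref{e:DV_kappa}, and only at the very end observes that the resulting pair factors as $\cM(0,m_1g)^t$ in \eqref{e:DP}. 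Your version is logically sound because \eqref{e:FSFF} holds for arbitrary smooth deformations of the parameterized curve, and a reparameterization is such a deformation; there is no circularity. What you give up is that the paper's proof doubles as an independent, self-contained verification that the evolution equations \eqref{e:FSFF} are consistent with reparameterization — the explicit formulas for $\kappa_1$ and $g_1$ are derived from scratch rather than inherited from $\cM$, which is arguably the point of placing this lemma in the appendix. What you gain is economy: a one-line chain rule replaces the somewhat delicate expansions of $\tg^{-1}$, $\tilde\vtau$, and $|\tilde\nabla_s\tilde\vtau|$ (including the sign choice in the curvature). Your closing remark about the unnormalized derivative $\partial_s\gamma=g\vtau$ supplying the factor of $g$ in $\mbbV^\tau=m_1g$ correctly identifies the only quantitative subtlety.
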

\begin{proof}
    
The perturbed interface map admits the expansion
$$ \tilde\gamma(s)=\gamma(s)+\delta m_1\partial_s \gamma +O(\delta^2).$$
Acting with $\partial_s$ yields
$$ \partial_s \tgam = \partial_s\gamma +\delta \left(m_1\partial_s^2\gamma +\partial_s \gamma \partial_sm_1\right)+O(\delta^2).$$
The perturbed arc length has the expansion
$$
\begin{aligned}
\tg &:=|\partial_s \tgam|
=g+\frac{\delta}{|\partial_s\gamma|}\left(m_1\partial_s^2\gamma\cdot\partial_s\gamma+|\partial_s\gamma|^2 \partial_s m_1\right) +O(\delta^2), \\
&=g+ \delta\left(m_1\partial_s g +g \partial_s m_1\right) +O(\delta^2),
\end{aligned}
$$
which we write as
\beq\label{e:DV_g}
\tg = g+ \delta g \nabla_s(m_1 g) +O(\delta^2).
\eeq
From this we deduce the expansion
$$ \begin{aligned}
       \tg^{-1} &= g^{-1}
       \left(1-\delta\nabla_s(m_1g)\right) +O(\delta^2).
\end{aligned}$$
The tangent vector expands according to
$$
\begin{aligned}
    \tilde\tau &:=\tilde{\nabla}_s\tgam = \frac{1}{\tg}\partial_s\tgam, 
  %  &=g^{-1} \left(1-\delta\nabla_s(m_1 g)\right) \left(\partial_s\gamma +\delta\left( m_1\partial_s^2\gamma+\partial_s\gamma \partial_s m_1\right)\right)+O(\delta^2),\\
    = \nabla_s \gamma + \delta\left(
    -\nabla_s(m_1g)\nabla_s\gamma +m_1\nabla_s\partial_s\gamma +\partial_s m_1 \nabla_s\gamma \right)+O(\delta^2),\\
%&=  \nabla_s \gamma + \delta\left(
%    -\nabla_s(m_1 g)\nabla_s\gamma +m_1\nabla_s(g\nabla_s\gamma) +\partial_s m_1 \nabla_s\gamma \right)+O(\delta^2),\\
    &= \tau + \delta\left(
    -m_1\tau \nabla_s g +m_1\nabla_s(g \tau) \right)+O(\delta^2).
\end{aligned}$$
Distributing the derivative and using \eqref{e:tau-n} to replace the surface gradient of the tangent yields
\beq    \tilde\tau =\tau -\delta \kappa m_1 g\,n+O(\delta^2).
\eeq
We use \eqref{e:curv-def} to expand the curvature,
$$\begin{aligned}
\tilde\kappa &:= \pm|\tilde\nabla_s\tilde\tau| = \pm\left|\frac{1}{\tg}(\partial_s\tilde\tau)\right|,\\
&= \pm\left|g^{-1} \left(1-\delta\nabla_s(m_1 g)\right)
 \left(\partial_s\tau -\delta\partial_s(\kappa m_1 g\,n)
 \right)\right| +O(\delta^2),\\
&= \pm\left| -\kappa n + 
\delta\left(\nabla_s(m_1 g) \kappa n -\nabla_s(\kappa m_1 g) n +\kappa^2 m_1 g\,\tau\right)
\right| +O(\delta^2),\\
&= \kappa -\delta 
\left(\nabla_s(m_1 g)\kappa  -\nabla_s(\kappa m_1 g)\right)+O(\delta^2),
\end{aligned}
$$
where we chose the sign that aligns with $\kappa.$ We rewrite the curvature perturbation as
\beq
\label{e:DV_kappa}
\tilde \kappa= \kappa +\delta 
 m_1 g \nabla_s\kappa +O(\delta^2).
\eeq
Combining \eqref{e:DV_g} and \eqref{e:DV_kappa}, the domain variation under the mapping $m$ induces the perturbations
\beq\label{e:DP}
\bpm  \kappa_1\cr g_1 \epm = 
\bpm m_1 g\, \nabla_s\kappa  \cr
g\nabla_s(m_1 g)
 \epm =\cM\bpm 0\cr m_1 g\epm.
 \eeq
 \end{proof}

\begin{lemma}
\label{l:Wind}
The second order energy \eqref{e:LIE} satisfies the first-integral identity \eqref{e:W-invar}.
\end{lemma}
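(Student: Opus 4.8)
The plan is to derive \eqref{e:W-invar} as a structural consequence of the reparameterization invariance of the intrinsic energy \eqref{e:LIE}, using Lemma \ref{l:DV} which immediately precedes this statement. Because the density $F$ depends only on the curvature and its surface gradients, the value of $\cE$ depends only on the image $\Gamma$ and is therefore unchanged under any smooth reparameterization of $\gamma$. Consequently, along the one-parameter reparameterization family $m(\cdot;\delta)$ of \eqref{e:m-DV} with tangent $m_1$, the number $\cE$ is literally constant in $\delta$, so its first variation vanishes before any cancellation is invoked.

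Concretely, I would invoke Lemma \ref{l:DV} to identify the intrinsic perturbation induced by such a reparameterization as the purely tangential image $U_1=\cM(0,m_1 g)^t$, recorded in \eqref{e:DP}. The first variation of $\cE$ along this path is the pairing $\langle\nabla_\mrI\cE,U_1\rangle_{L^2(\mbbS)}$, which must be zero. Substituting $U_1=\cM(0,m_1 g)^t$ and moving $\cM$ across the inner product through its $L^2(\mbbS)$-adjoint $\cM^\dag$ gives
$$0=\langle\nabla_\mrI\cE,\cM(0,m_1 g)^t\rangle_{L^2(\mbbS)}=\langle\cM^\dag\nabla_\mrI\cE,(0,m_1 g)^t\rangle_{L^2(\mbbS)}=\int_\mbbS[\cM^\dag\nabla_\mrI\cE]_2\,m_1 g\,\rmd\sigma.$$
Since this holds for every $m_1\in H^2_{\rm per}(\mbbS)$ and $g>0$, the second component $[\cM^\dag\nabla_\mrI\cE]_2$ must vanish pointwise. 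Reading off the second row $(\nabla_s\kappa,\,-\nabla_s(g\,\smbull))$ of $\cM^\dag$ then yields $\partial_\kappa\cE\,\nabla_s\kappa-\nabla_s(g\partial_g\cE)=0$, which is exactly \eqref{e:W-invar}. Note this conceptual route needs only the form of $\cM^\dag$ and the vanishing first variation, not the explicit gradient \eqref{e:first_var}.

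The only delicate point is that the vanishing is at the level of the first variation, which holds because $\cE(\tilde U)$ is genuinely independent of $\delta$; one should confirm the reparameterization invariance of the measure and surface differentials asserted in the Reparameterization subsection so that the energy value truly does not move. As an independent, Lemma-free check I would instead substitute the explicit $\partial_\kappa\cE$ and $g\partial_g\cE$ from \eqref{e:first_var} directly into \eqref{e:W-invar} and expand $\nabla_s(g\partial_g\cE)$ by the product rule together with $\nabla_s\nabla_s=\Delta_s$, using the chain-rule identity $\nabla_s F=F_2\nabla_s\Delta_s\kappa+F_1\Delta_s\kappa+F_0\nabla_s\kappa$. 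Here the main obstacle is purely bookkeeping: the third-order terms $\pm F_2\nabla_s\Delta_s\kappa$ cancel, the cross terms $\pm\nabla_s F_2\,\Delta_s\kappa$ and $\pm F_1\Delta_s\kappa$ cancel, and the survivors collapse to $(\Delta_s F_2-\nabla_s F_1+F_0)\nabla_s\kappa=\partial_\kappa\cE\,\nabla_s\kappa$, confirming the identity and cross-validating the conceptual argument.
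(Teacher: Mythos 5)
Your proposal is correct, but your primary argument is a genuinely different route from the paper's. The paper proves Lemma~\ref{l:Wind} by the direct computation you relegate to a ``cross-check'': it substitutes the explicit entries of \eqref{e:first_var}, distributes $\nabla_s$ across $g\partial_g\cE$ (treating the cases $F=F(\Delta_s\kappa)$, $F(\nabla_s\kappa)$, $F(\kappa)$ separately), and watches the third-order and cross terms cancel exactly as you describe, leaving $(\Delta_sF_2-\nabla_sF_1+F_0)\nabla_s\kappa$. Your main argument instead derives \eqref{e:W-invar} structurally: reparameterization invariance of \eqref{e:LIE} forces $\langle\nabla_\mrI\cE,\cM(0,m_1g)^t\rangle_{L^2(\mbbS)}=0$ for all $m_1$, and passing $\cM$ to its adjoint plus the arbitrariness of $m_1g$ gives the pointwise vanishing of $[\cM^\dag\nabla_\mrI\cE]_2$. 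This is sound --- the needed invariance ($\rmd\tilde\sigma=\rmd\sigma$, $\tilde\kappa=\kappa\circ m$, $\tilde\nabla_s\tilde f=(\nabla_s f)\circ m$) is established in the Reparameterization subsection, and Lemma~\ref{l:DV} supplies exactly the tangent $U_1=\cM(0,m_1g)^t$. In fact the paper itself gestures at your argument in the remark following \eqref{e:W-invar} (``the flip-side of the fact that tangential velocity is equivalent to reparameterization''), but does not use it as the proof. What each buys: your route is more general, applying verbatim to any parameterization-invariant intrinsic energy without ever computing \eqref{e:first_var}, and it explains \emph{why} the identity must hold; the paper's computation is self-contained, requires no appeal to the variational framework being exact for non-normal perturbations, and simultaneously serves as a consistency check on the explicit gradient formula. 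Since you include both, nothing is missing.
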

\begin{proof}
It is sufficient to act out the $\nabla_s$ operator on the arc length variation term in \eqref{e:first_var} and observe that it cancels with functional variation term. We consider $F$ in cases by the type of variable dependence. In all cases the factors of $g$ cancel in arc length variation. For $F=F(\Delta_s \kappa)$, 
$$\begin{aligned}g\partial_g\cE &= \nabla_s\left(-F_2\Delta_s\kappa+\nabla_sF_2 \nabla_s\kappa +F\right),\\
&=-\nabla_s F_2 \Delta_s\kappa-F_2\nabla_s\Delta_s\kappa+\Delta_sF_2\nabla_s\kappa +\nabla_sF_2\Delta_s\kappa+F_2\nabla_s\Delta_s\kappa,\\
&= \Delta_sF_2 \nabla_s \kappa=\partial_\kappa \cE \nabla_s\kappa.
\end{aligned}$$
For the case $F=F(\nabla_s \kappa)$ the identify reduces to 
$$\begin{aligned}
g\partial_g\cE&= \nabla_s(-F_1\nabla_s \kappa+F)
=-\nabla_sF_1\nabla_s\kappa -F_1\Delta_s\kappa+F_1\Delta_s\kappa=\partial_\kappa\cE\nabla_s\kappa.
\end{aligned}
$$
Finally for the case $F=F(\kappa)$ the identity is an immediate application of the chain-rule.
%$$ F_0\nabla_s\kappa = \nabla_s(F)=F_0\nabla_s\kappa.$$
%The identify for the full dependence of $F$ follows from the linear addition of the three sub-cases.
\end{proof}

\subsection{Two-Point Curve Invariants.}
\label{s:TP_invar}

We establish the following invariants associated to the two-point interaction kernel acting on smooth, closed, non-self-intersecting curves.
\begin{lemma}
\label{l:TP_invar}
Let $\gamma:\mbbS\mapsto \mbbR^2$ be a smooth map whose image is  closed and non-self-intersecting. Define the distance vector $d:\mbbS\times\mbbS\to \mbbR^2$ via $d(s,\ts)=\gamma(s)-\gamma(\ts).$
The product space admits the decomposition
    $$\mbbS\times\mbbS =\bigcup\limits_{r\geq0} \mrE_r,$$
    in terms of the $d$-level sets  
    $$\mrE_r:=\{(s,\ts): |d(s,\ts)|=r\}.$$
Let $g(s)=|\gamma'(s)|$ denote the arc length of the parameterization of $\gamma$ and $\mathrm H$ denote the one-dimensional Hausdorff measure on $\mrE_r.$ Then $\gamma$ satisfies the equalities
\begin{align}
\label{e:Ad-Rot}
    \int_{\mrE_r} \frac{(n\cdot\gamma)(\tilde\gamma\cdot \tau) -(n\cdot\tilde\gamma)(\gamma\cdot\tau)}
    {\left( g^2 (d\cdot \tau)^2+\tilde g^2 (d\cdot \tilde \tau)^2\right)^{\frac12}}\, g\tilde g\, \rmd \mathrm H\,\rmd r &=0, \\
 \label{e:Ad-Trans}
   \int_{\mrE_r} \frac{(n\cdot d)(n\cdot v_0) +(\tau\cdot d)(\tau\cdot v_0)}
    {\left( g^2 (d\cdot \tau)^2+\tilde g^2 (d\cdot \tilde \tau)^2\right)^{\frac12}}\, g\tilde g\, \rmd \mathrm H\,\rmd r &=0,
\end{align}
for each $r\geq0$ and each $v_0\in\mbbR^2.$ Here $n, \tau$ denote the normal and tangent to $\gamma$ at $s$ while a tilde superscript denotes evaluation at $\ts$.
%The extrinsic variation of the adhesion-repulsion energy $\cE_\mrA$ defined in \eqref{e:AdhE} satisfies $\nabla_{\!\mrE}\cE_\mrA\perp \ker(\cM).$
\end{lemma}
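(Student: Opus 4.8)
The plan is to prove both identities from a single antisymmetry principle, after two preparatory reductions: recasting the double integral over $\mbbS\times\mbbS$ through the coarea formula, and algebraically collapsing the two numerators.

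First I would establish the coarea structure. Writing $\Phi(s,\ts):=|d(s,\ts)|$ and using $\partial_s\gamma=g\vtau$ (from \eqref{e:tau-n} and the definition of $\nabla_s$), a direct computation gives $\partial_s\Phi=g\,(d\cdot\vtau)/\Phi$ and $\partial_{\ts}\Phi=-\tilde g\,(d\cdot\tilde\tau)/\Phi$, so that the Euclidean gradient of $\Phi$ in the $(s,\ts)$ coordinates satisfies $|\nabla\Phi|=r^{-1}\bigl(g^2(d\cdot\vtau)^2+\tilde g^2(d\cdot\tilde\tau)^2\bigr)^{1/2}$ on $\mrE_r$. This identifies the denominator in \eqref{e:Ad-Rot} and \eqref{e:Ad-Trans} precisely as $r\,|\nabla\Phi|$, and the coarea formula then rewrites a weighted double integral $\int_{\mbbS^2}h\,g\tilde g\,\rmd s\,\rmd\ts$ as $\int_0^\infty\!\int_{\mrE_r}h\,g\tilde g/|\nabla\Phi|\,\rmd\mathrm H\,\rmd r$, which both legitimizes the level-set decomposition $\mbbS\times\mbbS=\bigcup_{r\ge0}\mrE_r$ and produces the surface density weighting the two identities.

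Second I would collapse the numerators using that $\{n(s),\vtau(s)\}$ is a positively oriented orthonormal frame of $\mbbR^2$. For \eqref{e:Ad-Trans}, expanding $d$ and $v_0$ in this frame yields $(n\cdot d)(n\cdot v_0)+(\vtau\cdot d)(\vtau\cdot v_0)=d\cdot v_0$. For \eqref{e:Ad-Rot}, the numerator is the determinant $(n\cdot\gamma)(\vtau\cdot\tilde\gamma)-(n\cdot\tilde\gamma)(\vtau\cdot\gamma)$ of the frame components of $\gamma(s)$ and $\gamma(\ts)$, which for a positively oriented orthonormal frame equals the scalar cross product $\gamma(s)\times\gamma(\ts)$. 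The core of the argument is then the involution $\iota:(s,\ts)\mapsto(\ts,s)$, an isometry of $\mbbS\times\mbbS$ that fixes each $\mrE_r$ as a set and preserves $\rmd\mathrm H$. Under $\iota$ the vector $d$ flips sign while $g\tilde g$ and the denominator are unchanged, its two summands merely trading places, so the weight is $\iota$-even; both reduced numerators are $\iota$-odd, since $d\cdot v_0\mapsto-d\cdot v_0$ and $\gamma(s)\times\gamma(\ts)\mapsto-\gamma(s)\times\gamma(\ts)$. Each integrand is therefore odd under a measure-preserving involution of its own domain, and the integrals vanish for every $v_0$ and every $r$.

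I expect the only real obstacle to be analytic rather than algebraic: controlling the coarea reduction at the critical set $\{\nabla\Phi=0\}$, which consists of the perpendicular-approach pairs where $d$ is normal to $\gamma$ at both endpoints and where the denominator degenerates. I would use the non-self-intersection hypothesis to confine $\Phi^{-1}(0)$ to the diagonal and invoke Sard's theorem to show the critical values occupy a null set of radii, so that for almost every $r$ the set $\mrE_r$ is a rectifiable one-manifold on which the integrand is $\mathrm H$-defined almost everywhere and the cancellation is rigorous; the exceptional radii form a measure-zero set that does not affect the outer $\rmd r$ integration. As an unconditional check one may instead apply coarea in reverse to see that the full iterated integral equals a double integral over $\mbbS^2$ of an $\iota$-odd function against the isometry-invariant measure, which vanishes directly. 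These per-radius identities are the geometric refinement, resolved over separation distance, that underlies the orthogonality of $\nabla_\mrE\cE_\mrA$ to the translational and rotational generators spanning $\ker\cM$ discussed in Section\,\ref{s:TP}.
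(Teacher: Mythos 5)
Your proof is correct, but it takes a genuinely different --- and more elementary --- route than the paper. The paper obtains \eqref{e:Ad-Rot} and \eqref{e:Ad-Trans} as consequences of the invariance of the two-point energy $\cE_\mrA$ under rigid rotations and translations: it pairs the extrinsic gradient \eqref{e:Adh-Ex_grad} against the generators of $\ker\cM$ from Lemma\,\ref{lem:kerM}, integrates by parts using $\nabla_s\mrB=\mbbA\cdot\tau$, applies the co-area formula exactly as you do, and then disaggregates the resulting aggregate identity over the level sets $\mrE_r$ by a density argument in the choice of kernel (since $r\mrA'(r^2)$ ranges over a dense subset of $L^2(\mbbR_+)$, the inner integral must vanish for a.e.\ $r$, hence for all $r$ by continuity). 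You bypass the energy entirely: after collapsing the numerators to $d\cdot v_0$ and a fixed sign times the scalar cross product $\gamma(s)\times\gamma(\ts)$, you observe that both are odd under the swap $(s,\ts)\mapsto(\ts,s)$, which preserves each $\mrE_r$, its Hausdorff measure, the weight $g\tilde g$, and the denominator (its two summands merely trade places), so each level-set integral vanishes by antisymmetry alone. Your argument is shorter, yields the per-$r$ statement directly for every regular value of $|d|$ without the density step, does not use closedness of the curve, and --- worth flagging --- reveals that these identities hold for pure symmetry reasons rather than being genuine constraints on the curve induced by rigid-body invariance, which somewhat tempers the paper's framing of them as ``non-trivial.'' What the paper's derivation buys in exchange is the conceptual origin of the identities (the orthogonality $\nabla_\mrE\cE_\mrA\perp\ker\cM$) and a reusable device for localizing an integral identity onto level sets by varying the kernel. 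Your treatment of the degenerate set where the denominator vanishes, via Sard's theorem, is appropriate and is in fact more careful than the paper's.
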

\begin{proof}
Lemma\,\ref{lem:kerM} establishes that the kernel of $\cM$ is comprised of the infinitesimal generators of the rigid body motions of $\gamma$. We first consider the rotational invariant.
%\[
%\langle\nabla_\mrE \cE_\mrA,\mbbW\rangle_{L^2(\mbbS)} = 0 \quad  \forall \mbbW \in \left\{\begin{pmatrix}e_1\cdot n \cr e_1\cdot \vtau\end{pmatrix},\begin{pmatrix}e_2\cdot n \cr e_2\cdot \vtau\end{pmatrix}, \begin{pmatrix}\gamma\cdot \vtau \cr -\gamma\cdot n\end{pmatrix} \right\}.
%\]
Using the formula \eqref{e:Adh-Ex_grad} and the representation of the rigid body rotation the invariance of energy under rigid rotation implies that 
$$
 0=\int_\mbbS\bpm \mbbA\cdot n+\kappa \mrB\\ 0 \epm \cdot\begin{pmatrix}\gamma\cdot \vtau \cr -\gamma\cdot n\end{pmatrix} \, \rmd \sigma  = \int_\mbbS (\mbbA\cdot n) (\gamma\cdot\tau) +\mrB\gamma\cdot\nabla_s n\,\rmd \sigma,
 $$
 where we used \eqref{e:tau-n} to rewrite $\kappa\tau.$
 Since $\nabla_s\gamma\cdot n=0,$ integrating by parts on the second term yields
 $$ \int_\mbbS \mrB \gamma\cdot \nabla_sn\,\rmd\sigma =
 -\int_\mbbS \gamma\cdot n \nabla_s\mrB\,\rmd \sigma.$$
 On the other hand
 $\nabla_s\mrB=\mbbA\cdot\tau,$
 so that
 \beq\label{e:Ad-Invar}
 \begin{aligned}
     %\int_\mbbS [\nabla_\mrE\cE_\mrA]_1 (\gamma\cdot\tau) \,\rmd\sigma
     0&=\int_\mbbS (\mbbA\cdot n)(\gamma\cdot\tau) -(\mbbA\cdot\tau)(\gamma\cdot n)\,\rmd \sigma,\\
     &=\int_{\mbbS\times\mbbS}\mrA'(|d|^2)
     \bigl((n\cdot d)(\tau\cdot\gamma)-(n\cdot\gamma) (d\cdot\tau)\bigr)\,\rmd\sigma_s\rmd\sigma_{\ts},\\
     &= \int_{\mbbS\times\mbbS}     \mrA'(|d|^2)
     \bigl((n\cdot \tilde\gamma)(\tau\cdot\gamma)-(n\cdot\gamma) (\tilde\gamma\cdot\tau)\bigr)
     \,\rmd\sigma_s\rmd\sigma_{\ts}.
     \end{aligned}
     \eeq
We apply the co-area formula to rewrite the integral over $\mbbS$ to one over the level sets of $u(s,\ts):=|d(s,\ts)|$.  Specifically we rewrite the last integral in terms of $f:\mbbS^2\mapsto\mbbR$, so that
$$ \int_{\mbbS^2}f(s,\ts) |\nabla_{s,\ts} u|\,\rmd s \rmd \ts = \int_{\mbbR_+}\int_{\mrE_r} f(s,\ts) \mrd \mathrm H \mrd r.$$
The gradient of $u$ satisfies
$$ \left| \nabla_{s,\ts}\,u\right| =\frac{1}{|d|}\left|\bpm g d\cdot \tau\cr 
-\tilde g d\cdot\tilde{\tau}\epm\right|=
\frac{1}{|d|} \left( g^2 (d\cdot \tau)^2+\tilde g^2 (d\cdot \tilde \tau)^2\right)^{\frac12},$$
where a tilde denotes evaluation at $\ts.$ 
%The Jacobian takes the value $$ |\nabla_{s,\ts}\, u| = \frac{1}{|d|} \left( g^2 (d\cdot \tau)^2+\tilde g^2 (d\cdot \tilde \tau)^2\right)^{\frac12}.$$
Since $|d|=r$ the co-area formula allows us to rewrite the last integral in \eqref{e:Ad-Invar} as
    $$0=\int_{\mbbR_+}\mrA'(r^2)r\int_{\mrE_r} \frac{(n\cdot\gamma)(\tilde\gamma\cdot \tau) -(n\cdot\tilde\gamma)(\gamma\cdot\tau)}
    {\left( g^2 (d\cdot \tau)^2+\tilde g^2 (d\cdot \tilde \tau)^2\right)^{\frac12}}\, g\tilde g\, \rmd \mrH\,\rmd r.$$
 Within this derivation the two-point interaction kernel $\mrA:\mbbR_+\mapsto\mbbR$ is an arbitrary smooth function. The integral over $\mrE_r$ is in $L^2(\mbbR_+)$ and is zero at $r=0$ where $\gamma=\tilde\gamma.$ Since $r \mrA'(r^2)$ can take dense values in $L^2(\mbbR)$ the result \eqref{e:Ad-Rot} follows.

 Rigid translates are induced by the extrinsic vector field $\mbbV=(v_0\cdot n,v_0\cdot\tau)$ where $v_0\in\mbbR^2$ is an arbitrary constant vector. We deduce that
 $$ 0   = \int_{\mbbS\times\mbbS}\mrA'(|d|^2)\bigl((n\cdot d)( v_0 \cdot n)+(\tau\cdot d) (v_0\cdot\tau) \big)\,\rmd\sigma_s\rmd\sigma_{\ts}.$$
 Following the same steps yields
 \eqref{e:Ad-Trans}.
 \end{proof}
 \begin{remark}

  The sets $\{\mrE_r\}_{r\geq0}$ are each comprised of a finite collection of disjoint components of $\mbbS^2.$ Under the assumption that $\gamma$ does not self-intersect, $\mrE_0$ comprises the diagonal of the product space and for $r>0$ small enough $\mrE_r$ comprises two disjoint curves that are symmetric about the diagonal. More generally, from the implicit function theorem, for each non-degenerate level set, defined below, there are a finite collection of functions $\ts_r:\mbbS\mapsto\mbbS$ such that $|d(s,\ts_r(s))|=r$ that satisfy the relation
    $$ \frac{\rmd \ts_r}{\rmd s}= \frac{\gamma^\prime(s)\cdot d(s,\ts_r)}{\gamma^\prime(\ts_r)\cdot d(s,\ts_r)}=\frac{g(s)\tau(s)\cdot d(s,\ts_r)}{g(\ts_r)\tau(\ts_r)\cdot d(s,\ts_r)},$$
    subject to initial data from distinct connected components of $\mrE_r.$ The equation for $\ts_r$ is well defined so long as $\tau(\ts)\cdot d(s,\ts_r)\neq 0.$ This quantity is zero when the circle of radius $r$ about $\gamma(s)$ intersects the image $\Gamma$ tangentially, indicating a boundary point of a component of the level set $\mrE_r.$  Non-degeneracy means that zeros of $\tau(\ts)\cdot d(s,\ts)$ are transverse in $\ts$ for fixed $s.$
 \end{remark}

\section*{Acknowledgment}
The second author recognizes support from the NSF through grant NSF-2205553. The third author recognizes support from NSERC Canada. The authors anticipate development of a freeware code library for general interfacial gradient flows described in this paper.

\end{document}